\documentclass[reqno,oneside,11pt,hidelinks]{amsart}
\usepackage{amssymb,amsmath,amsthm,amsxtra,calc,bm, color}
\usepackage{enumerate}
\usepackage[margin=.95in]{geometry}

\usepackage[scr=boondoxo]{mathalfa}
\usepackage{mathtools}

\usepackage[labelsep=period]{caption}
\captionsetup[table]{name=Table}

\usepackage{mleftright}
\mleftright

\usepackage{enumitem}

\usepackage{nccmath}
\usepackage{cases}
\usepackage{calc}
\usepackage{graphicx}
\usepackage{hyperref, commath}
\usepackage[final]{microtype}

\renewcommand{\(}{\left(}
\renewcommand{\)}{\right)}
\newcommand{\spmod}[1]{\ensuremath{\,(#1)}}

\renewcommand{\|}{\big |}

\def\Z{\mathbb{Z}}

\def\R{\mathbb{R}}

\def\C{\mathbb{C}}

\def\SL{{\rm SL}}

\def\GL{{\rm GL}}

\newcommand{\pmfrac}[2]{\left(\mfrac{#1}{#2}\right)}

\newcommand{\pMatrix}[4]{\left(\begin{matrix}#1 & #2 \\ #3 & #4\end{matrix}\right)}

\renewcommand{\pmatrix}[4]{\left(\begin{smallmatrix}#1 & #2 \\ #3 & #4\end{smallmatrix}\right)}

\usepackage{xspace}

\newcommand{\Sh}{\mathcal{S}}

\renewcommand{\hat}{\widehat}
\renewcommand{\tilde}{\widetilde}

\DeclareMathOperator{\new}{new}

\def\ep{\varepsilon}

\newtheorem{theorem}{Theorem}[subsection]
\newtheorem{lemma}[theorem]{Lemma}

\newtheorem{proposition}[theorem]{Proposition}

\theoremstyle{remark}
\newtheorem*{remark}{Remark}

\newtheorem{definition}{Definition}
\numberwithin{equation}{section}

\mathtoolsset{showonlyrefs}

\usepackage{listings}

\lstdefinestyle{myListingStyle} 
    {
        basicstyle = \small\ttfamily,
        breaklines = true,
    }
\usepackage[nohyperlinks]{acronym}
\renewcommand{\url}[1]{#1}

\makeatletter
\newcommand*{\rom}[1]{\expandafter\@slowromancap\romannumeral #1@}
\makeatother

\makeatletter
\def\imod#1{\allowbreak\mkern5mu({\operator@font mod}\,\,#1)}
\makeatother
\allowdisplaybreaks

\makeatletter
\makeatother

\makeatletter
\@namedef{subjclassname@2020}{%
  \textup{2020} Mathematics Subject Classification}
\makeatother

\begin{document}
\title{Explicit images for the Shimura Correspondence}

\author{Matthew Boylan \and Swati}
\address{Department of Mathematics, University of South Carolina, Columbia, SC, 29208, USA}
\email{boylan@math.sc.edu, s10@email.sc.edu}

\subjclass[2020]{11F37, 11F11, 11F20}
\keywords{Shimura Lift, Eta-multiplier}

\date{}

\begin{abstract}
For $(r, 6) = 1$ with $1 \leq r \leq 23$, and a non-negative integer $s$, we define
\[
\mathcal{A}_{r,s, N, \chi} = \{ \eta(z)^{r} f(z) : f(z) \in M_{s}(N, \chi)\}.
\]
In 2014, Yang showed that for $F \in \mathcal{A}_{r, s, 1, 1_N}$, we have $\textup{Sh}_{r}(F \mid V_{24}) = G \otimes \chi_{12}$ where $G\in S^{new}_{r+2s - 1}(\Gamma_{0}(6), - \left( \frac{8}{r}  \right), - \left( \frac{12}{r}  \right))$, where $\textup{Sh}_{r}$ is the $r$-th Shimura lift associated to the theta-multiplier. He proved a similar result for $(r,6) = 3$.\:His proofs rely on trace computations in integral and half-integral weights.

In this paper, we provide a constructive proof of Yang's result.  We obtain explicit formulas for $\mathcal{S}_{r}(F)$, the $r$-th Shimura lift associated to the eta-multiplier defined by Ahlgren, Andersen, and Dicks, when $1\leq r\leq 23$ is odd and $N = 1$.  We also obtain formulas for lifts of Hecke eigenforms multiplied by theta-function eta-quotients and lifts of Rankin-Cohen brackets of Hecke eigenforms with theta-function eta-quotients.
\end{abstract}

\maketitle

\section{Introduction and Statement of Results}
\subsection{Introduction}
\noindent
Let $N \geq 1$ and let $k \in \Z$ or $k \in \Z + \frac{1}{2}$. Let $\chi$ be the Dirichlet character modulo $N$ and let  $M_{k}(N, \chi)$ denote the space of holomorphic modular forms of weight $k$, on $\Gamma_{0}(N)$ and nebentypus $\chi$. Let $E_{k}(N, \chi)$ denote the subspace of Eisenstein series and $S_{k}(N, \chi)$ denote the subspace of cusp forms.

In 1999, Guo and Ono \cite{guo-ono} discovered a striking connection between squares of partition values and central critical values of quadratic twists of modular $L$-functions, modulo a prime $p$ where $13 \leq p \leq 31$. This serves as an important application of Shimura Correspondence: a family of maps between half-integer weight modular forms and integer weight forms. Furthermore, the works of Waldspurger \cite{waldspurger1, waldspurger2} and Kohnen and Zagier \cite{kohnen-zagier} establish a connection between squares of Fourier coefficients of a half-integer weight modular form $f$ and central critical values of quadratic twists of $L$-functions of the corresponding integer weight form $\textup{Sh}_{t}(f)$.

\medskip

For a prime $p$ such that $13 \leq p \leq 31$, let $G_{p}(z)$ denote the unique newform in the space ${S}_{p - 3}(6)$ whose Fourier expansion at infinity is as follows:
\[
G_{p}(z) := \sum_{n = 1}^{\infty} a_{p}(n)q^{n} := q + \left(\frac{2}{p} \right) \cdot 2^{\frac{p - 5}{2}} q^{2} +  \left(\frac{3}{p} \right) \cdot 3^{\frac{p - 5}{2}} q^{3} + \cdots
\]
Let $1 \leq r \leq 23$ with $(r, 6) = 1$, and a non-negative even integer $s$, define the following subspace:
\begin{equation}\label{FIA}
\mathcal{A}_{r, s, N, \chi} := \{\eta^{r}(z) f(z) : f(z) \in {M}_{s}(N, \chi)\} \subseteq  {S}_{r/2 + s}\left(N, \chi \nu_{\eta}^{r} \right).
\end{equation}
We note that 
\begin{align}
    \mathcal{A}_{r, s, 1, 1_N} = S_{r/2 + s}(1, \nu_{\eta}^r),  \: \mathcal{A}_{3r, s, 1, 1_N} = S_{3r/2 + s}(1, \nu_{\eta}^{3r}), \:\text{and}\\
    \mathcal{A}_{r, s, p, \left( \frac{\cdot}{p} \right)} = S_{r/2 + s}\left(p, \left( \frac{\cdot}{p} \right) \nu_{\eta}^r\right) \: \text{for all primes} \: p \geq 5.
\end{align}
Note that the works of Guo and Ono suggest that for $F \in \mathcal{A}_{r,s, 1, 1_N}$ and for all square-free $t \geq 1$, we have
\[
\textup{Sh}_{t}\left( F \mid V_{24} \right) \in {S}_{r + 2s - 1}(6) \otimes \left(  \frac{12}{\cdot} \right).
\]

Further, in 2010, Garvan \cite{garvan} showed for $1 \leq r \leq 23$ with $(r, 6) = 1$, non-negative even integers $s$, and primes $p \geq 5$, 
\[
F \mid V_{24}  \mid T_{p^2} \in \mathcal{A}_{r,s, 1, 1_N}.
\]
It follows that if $\textup{dim} \:{M}_{s}(1) = 1$, and $0 \neq f(z) \in {M}_{s}(1)$, then the function $F \mid V_{24}$ is a Hecke eigenform for ${S}_{r/2 + s}(576, \left(  \frac{12}{\cdot} \right) \nu_{\theta}^{r + 2s})$.
The forms $G_{p}(z)$ for all $13 \leq p \leq 31$ in the Guo-Ono paper are eigenforms for all $T_{p^{2}}$ for this reason. 

Moreover, in 2010, J.J. Webb \cite{webb} extended the results of Guo and Ono to account for the Ramanujan Congruences.\:In other words, he established similar connections between the partition function and ratios of central critical values of modular L-functions for primes $5, 7$ and $11$.

The works of Guo and Ono motivated subsequent work on Shimura images on the subspaces $\mathcal{A}_{r,s, 1, 1_N}$. In 2014, Yang proved the following isomorphism.
\begin{theorem}[Theorems 1 and 2, \cite{yang1}] \label{Yang}
Let $r$ be an integer satisfying $1 \leq r \leq 23$ , and let $s$ be a non-negative even integer.
\begin{enumerate}
    \item Let $(r,6) = 1$.  Let $F \in \mathcal{A}_{r,s, 1, 1_N}$.
\[
\textup{Sh}_{r}\left( F \mid V_{24} \right) \in {S}_{r + 2s - 1}^{new} \left(6, - \left(  \frac{8}{r} \right), - \left( \frac{12}{r} \right) \right) \otimes \left( \frac{12}{\cdot} \right).
\]
    \item Let $r$ be an odd integer with $0 < r < 8$.  Let $F \in \mathcal{A}_{3r,s, 1, 1_N}$.
\[
\textup{Sh}_{r}\left( F \mid V_{8} \right) \in {S}_{3r + 2s - 1}^{new} \left(2, - \left(  \frac{8}{r} \right)\right) \otimes \left( \frac{-4}{\cdot} \right).
\]
\item The maps (1) and (2) are isomorphisms.
\end{enumerate}
\end{theorem}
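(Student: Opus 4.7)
My strategy is to reduce each assertion to the classical Shimura correspondence for the theta-multiplier, using $V_{24}$ (respectively $V_8$) as the device that converts the $\eta$-multiplier into the theta setting, and then to pin down the resulting level, nebentypus, and Atkin--Lehner signs by twisting and by examining the support of Fourier expansions.

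For part (1), I would first check that $F\mid V_{24}\in S_{r/2+s}\bigl(576,\left(\tfrac{12}{\cdot}\right)\nu_\theta^{r+2s}\bigr)$. This rests on Shimura's identity comparing $\nu_\eta$ to $\nu_\theta$ on the subgroup where both are defined -- the discrepancy is precisely the character $\left(\tfrac{12}{\cdot}\right)$ -- together with the fact that $V_{24}$ clears the eta-part of the multiplier at the cost of raising the level by $24^2$. Now that we sit in the theta-multiplier setting, Shimura's theorem produces the weight $r+2s-1$ holomorphic form $\textup{Sh}_r(F\mid V_{24})$ on $\Gamma_0(288)$ with nebentypus $\left(\tfrac{12}{\cdot}\right)$. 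Twisting by $\left(\tfrac{12}{\cdot}\right)$ cancels the nebentypus, and the fact that the Fourier coefficients of $F\mid V_{24}$ are supported in the single residue class $r^2 \pmod{24}$ forces a twofold level drop, collapsing the level all the way to $6$ and producing the ambient space of the theorem. To identify the Atkin--Lehner signs $-\left(\tfrac{8}{r}\right)$ and $-\left(\tfrac{12}{r}\right)$ at the primes $2$ and $3$, I would compute the action of $W_2$ and $W_3$ on a well-chosen test vector (for instance $F=\eta^r$ when $s=0$, or $F=\eta^r E_s$ for $s>0$) using Atkin--Li's formulas for the interaction of Atkin--Lehner operators with quadratic twisting. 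The Kronecker symbols appear because $\nu_\eta^r$ evaluated at the standard Atkin--Lehner representatives at $2$ and $3$ is $\left(\tfrac{8}{r}\right)$ and $\left(\tfrac{12}{r}\right)$ respectively, while the overall sign $-1$ is forced by the normalization of $V_{24}$ together with $\sqrt{N}$ in the Atkin--Lehner operator. Newness at level $6$ would then follow because an oldform coming from level $1$, $2$, or $3$ cannot have Fourier expansion supported only on integers coprime to $6$, whereas the twist forces exactly that.

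Part (2) is handled by the completely analogous argument with $V_8$ in place of $V_{24}$, the conductor-$4$ character $\left(\tfrac{-4}{\cdot}\right)$ replacing $\left(\tfrac{12}{\cdot}\right)$, and Shimura's level dropping to $2$; the sign $-\left(\tfrac{8}{r}\right)$ is extracted from the same $W_2$-calculation. For the isomorphism in part (3), I would compare dimensions: the dimension of $\mathcal{A}_{r,s,1,1_N}$ can be read from the valence formula applied to $\eta^r f$, while the dimension of the target newform space with the prescribed Atkin--Lehner signs is accessible from the Eichler--Selberg trace formula or from Martin's tables. A matching dimension count, together with injectivity of the Shimura lift on Hecke eigenspaces (via strong multiplicity one on the integer side), upgrades the map to an isomorphism. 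The main obstacle I anticipate is Step (3)/(4): showing rigorously that the twist by $\left(\tfrac{12}{\cdot}\right)$ drops the level to \emph{exactly} $6$ (not a proper divisor or multiple), and that the Atkin--Lehner signs emerge uniformly across all $8$ residues of $r$ modulo $24$ with $(r,6)=1$. This appears to require either careful case-work on each residue class or an elegant uniform identity relating $\nu_\eta^r$ on $W_2,W_3$ to the Kronecker symbols in a single stroke, and this is where a constructive approach via explicit formulas would likely offer the most leverage.
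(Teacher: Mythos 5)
Your overall architecture (pass to the theta-multiplier via $V_{24}$, apply Shimura, twist, then pin down level, Atkin--Lehner signs, and newness) is the shape of Yang's original argument rather than of this paper, which instead proves the result constructively: it decomposes $\bigl(\tfrac{\eta^r(z)}{\eta(rz)}f(z)\bigr)\big|U_r$ into Hecke eigenforms $g_i$ on $\Gamma_0(r)$ with nebentypus $\left(\tfrac{\cdot}{r}\right)$, exhibits $\mathcal{S}_1(\eta g_i)$ explicitly as $g_i(z)g_i(6z)-g_i(2z)g_i(3z)$ by coefficient manipulation and Hecke multiplicativity, verifies the $W_2$, $W_3$ eigenvalues and $2,3$-newness by direct computation with the trace criterion, and then descends from level $6r$ to level $6$ via $\mathrm{Tr}_6^{6r}$. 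Measured against either route, your sketch has gaps at exactly the points you flag as "obstacles," and they are not cosmetic.

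First, the level collapse to exactly $6$ is the heart of the theorem and is asserted, not proved: the support of the coefficients of $F\mid V_{24}$ in a single class mod $24$ explains why the \emph{twisted} form lives at high level with support prime to $6$, but it does not descend the untwisted form to $\Gamma_0(6)$; that descent is precisely what Yang's trace identities (or this paper's explicit trace computation $\mathrm{Tr}_6^{6r}(\sum\alpha_iG_i)=(r+1)\sum\alpha_iG_i$) accomplish. Second, your newness argument fails: if $G$ is \emph{old} at $2$ or $3$ on $\Gamma_0(6)$, the twist $G\otimes\left(\tfrac{12}{\cdot}\right)$ is still supported on $(n,6)=1$, so support cannot distinguish new from old; newness at $p\in\{2,3\}$ is equivalent to the vanishing of the two traces in Lemma~\ref{new_criterion} (equivalently to $a_p=-\epsilon_p\,p^{k/2-1}$) and must be computed, which is what the paper's Lemma~\ref{2-3-new} and the surrounding $U_2$, $W_2^{6N}$, $H_{6N}$ calculations do. Third, a $W_2$, $W_3$ computation on one test vector does not show the whole image lies in a single Atkin--Lehner eigencomponent unless you already know the map is Hecke-equivariant onto a space where the sign is rigid. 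Finally, for part (3), injectivity does not follow from strong multiplicity one on the integral-weight side: a nonzero half-integral form could a priori lift to zero, and multiplicity one for the spaces $\mathcal{A}_{r,s,1,1_N}$ is a \emph{consequence} of Yang's theorem (as the paper remarks), so invoking it would be circular. Each of these four points needs an actual argument before the proposal constitutes a proof.
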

Here, ${S}_{r + 2s - 1}^{new} (6, \epsilon_{2}, \epsilon_{3})$ is the space of newforms of weight $r + 2s - 1$ on $\Gamma_{0}(6)$ that are eigenfunctions with eigenvalues $\epsilon_{2} = - \left(  \frac{8}{r} \right) $ and $\epsilon_{3} = - \left( \frac{12}{r} \right) $ for the Atkin-Lehner operator $W_{2}^{6}$ and $W_{3}^{6}$, respectively. 
\begin{remark}
Yang's work shows that the multiplicity-one property holds for the spaces $\mathcal{A}_{r,s, 1, 1_N}$ defined in Theorem \ref{Yang}. Multiplicity - one does not, in general, hold for spaces of half-integer weight modular forms. 
\end{remark}
Before we describe our work, we give some of the benchmark results regarding
explicit Shimura images of a certain class of half-integer modular forms.

\medskip

The story begins with Selberg who in an published manuscript obtained a formula for the Shimura lift $\textup{Sh}_{t}$, of an eigenform times a theta function, for $t = 1.$
\begin{theorem}\label{Selberg}
Suppose $\displaystyle{f(z) = \sum_{n = 1}^{\infty} a(n)q^{n}} \in {S}_{k}(1)$ is a normalized eigenform for all Hecke operators $T_{p}$. Define
\[
F(z) = f(4z) \theta(z) \in {S}_{k + 1/2} (4, \nu_{\theta}^{2k + 1}).
\]
Then we have $\textup{Sh}_{1}(F(z)) = f(z)^{2} - 2^{k - 1} f(2z)^{2} \in {S}_{2 k}(2)$.
\end{theorem}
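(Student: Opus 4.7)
The plan is to compute both sides of the claimed identity from their $q$-expansions and match them using Hecke multiplicativity. First I would extract the square-indexed Fourier coefficients of $F(z) = f(4z)\theta(z) = \sum_{N \geq 0} b(N) q^N$. Since $f(4z) = \sum_{n \geq 1} a(n) q^{4n}$ and $\theta(z) = \sum_{m \in \Z} q^{m^2}$, every representation $n^2 = m^2 + 4\ell$ contributes to $b(n^2)$; the substitution $u = (n-m)/2$, $v = (n+m)/2$ (which is valid since $m$ must have the same parity as $n$) converts this into
\[
b(n^2) = \sum_{\substack{u + v = n \\ u, v \geq 0}} a(uv) = \sum_{\substack{u + v = n \\ u, v \geq 1}} a(uv) =: c(n),
\]
the final equality using $a(0) = 0$.

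Next, I would insert this into Shimura's formula, which writes the $n$-th Fourier coefficient $A(n)$ of $\textup{Sh}_1(F)$ as $\sum_{d \mid n} \chi(d) \left(\frac{(-1)^k}{d}\right) d^{k-1} b(n^2/d^2)$. Because $F \in S_{k+1/2}(4, \nu_\theta^{2k+1})$, the Dirichlet piece of its nebentypus is $\chi(d) = \left(\frac{-1}{d}\right)^k$, and when paired with the character $\left(\frac{(-1)^k}{d}\right)$ built into Shimura's formula the two cancel on odd $d$ and kill even $d$, leaving
\[
A(n) = \sum_{\substack{d \mid n \\ d \text{ odd}}} d^{k-1}\, c(n/d).
\]

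On the other side, Hecke multiplicativity $a(u)a(v) = \sum_{e \mid (u,v)} e^{k-1} a(uv/e^2)$ together with a swap of summation yields, for $g(n) := \sum_{u+v=n} a(u)a(v)$ (the $q^n$-coefficient of $f(z)^2$),
\[
g(n) = \sum_{e \mid n} e^{k-1}\, c(n/e).
\]
Writing $n = 2^j n'$ with $n'$ odd and separating the even and odd values of $e$ in this divisor sum produces a simple telescoping: the contribution of odd $e$ alone equals $g(n) - 2^{k-1} g(n/2)$ when $n$ is even, and equals $g(n)$ when $n$ is odd. Since $f(2z)^2$ has $q^n$-coefficient equal to $g(n/2)$ for $n$ even and $0$ for $n$ odd, this matches $A(n)$ with the Fourier coefficients of $f(z)^2 - 2^{k-1} f(2z)^2$ term by term; the level $2$ of the image is forced by the presence of the $f(2z)^2$ summand.

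The main obstacle, in my view, is pinning down the Dirichlet character that enters Shimura's formula for $F = f(4z)\theta(z)$: one must correctly read off the nebentypus from the multiplier $\nu_\theta^{2k+1}$ at level $4$ in order to see the crucial cancellation that restricts the divisor sum for $A(n)$ to odd $d$. Once this ingredient is in hand, every remaining step is a direct divisor-sum manipulation together with one invocation of Hecke multiplicativity.
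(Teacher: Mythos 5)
Your argument is correct, but note that the paper does not actually prove Theorem \ref{Selberg}: it is quoted as background, attributed to Selberg's unpublished manuscript (and to Cipra's later generalization), so there is no in-paper proof to measure you against. Your computation is the standard one and it checks out: the substitution $u=(n-m)/2$, $v=(n+m)/2$ correctly gives $b(n^2)=\sum_{u+v=n,\,u,v\ge 1}a(uv)=:c(n)$; since level-one cusp forms force $k$ even, the character $\chi_1$ in Shimura's divisor sum is indeed trivial on odd $d$ and vanishes on even $d$, so $A(n)=\sum_{d\mid n,\ d\ \mathrm{odd}}d^{k-1}c(n/d)$; and Hecke multiplicativity plus the interchange of summation gives $g(n)=\sum_{e\mid n}e^{k-1}c(n/e)$ for the coefficients of $f^2$, after which splitting off the even divisors $e=2e'$ (each of which satisfies $e'\mid n/2$) yields exactly $g(n)-2^{k-1}g(n/2)$. (Calling this a ``telescoping'' is loose wording, but the identity is right, as one can confirm on $n=2,3,4$ using $a(2)^2=a(4)+2^{k-1}$.) It is worth observing that your route is essentially the same strategy the authors use to prove their own Theorem \ref{first_image}, the eta-multiplier analogue: there too one expands $b(n^2)$ from the theta factor, substitutes into the lift formula of Theorem \ref{AAD1}, reorganizes the double divisor sum, and invokes the multiplicativity relation \eqref{multiplicativity}; the extra work in the paper (Lemma \ref{2-3-new} and the Atkin--Lehner computations) is needed only because the primes $2$ and $3$ do not disappear as cleanly there as the prime $2$ does in your setting, where the term $-2^{k-1}f(2z)^2$ absorbs the even divisors outright.
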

In 1989, Cipra \cite{cipra} generalized Selberg's work by considering Shimura images for half-integer weight forms that are products of a newform times a theta function of arbitrary level and character of prime power modulus.

\begin{theorem}\label{Cipra}
Let $\displaystyle{f(z) = \sum_{n = 1}^{\infty} a(n)q^{n} \in {S}_{k}(N, \chi)}$ be a newform, where $\chi$ is a character modulo $N$ with $\chi(-1) = (-1)^{k}$. Let $\chi_{r}$ be a primitive character modulo $r$, where $r = p^{m}, m \geq 1$, and $p$ is prime. Define the generalized theta function, 
\begin{equation} \label{gen_theta}
\theta(\chi_{r};z) = \displaystyle{\sum_{n \in \mathbb{Z}}} \chi_{r}(n)n^{\nu}q^{n^{2}} \in M_{1/2 + \nu}\left( 4r^2, \chi_{r} \chi_{-4}^{\nu} \nu_{\theta}^{2 \nu + 1} \right),
\end{equation}
where $\chi_{r}$ is a Dirichlet character of modulus $r$, where $\nu = 0, 1$ is chosen such that $\chi_{r}(-1) = (-1)^{\nu}$. Take $\mu \geq m$, and define
\[
F(z) := f(4 p^{\mu}z) \theta(\chi_{r};z)
\]
Further, define
\[
g(z) := \sum_{n = 1}^{\infty} c(n)q^{n} = \begin{cases}
f(z) f(p^{\mu}z) & \text{if} \ \nu = 0 \\
\frac{1}{2 \pi i}(f'(z) f(p^{\mu}z) - p^{\mu}f(z) f'(p^{\mu}z)) & \text{if} \ \nu = 1 \\
\end{cases}
\]
and let $G(z) = g(z) \otimes \chi_{r}$. 
Let $N' = \textup{lcm} (Np^{\mu}, r^{2})$. Then, $F \in {S}_{k + \nu + 1/2}(4N', \chi \chi_{r} \chi_{-4}^{k + \nu} \nu_{\theta}^{2(k + \nu) + 1})$, and we have 
\[
\textup{Sh}_{1}(F(z)) = G(z) - 2^{k + \nu - 1} \chi_{r}(2) \chi(2) G(2z) \in {S}_{2(k + \nu)} (2 N'r, \chi^{2} \chi_{r}^{2}).
\]
\end{theorem}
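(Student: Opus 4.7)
The plan is to verify the stated identity by a direct Fourier-coefficient computation, following the template of Selberg (Theorem~\ref{Selberg}) and Shimura's original Dirichlet series approach. The argument has three essential ingredients: the explicit $q$-expansion of $F = f(4p^\mu z)\theta(\chi_r;z)$, the $L$-series formulation of $\textup{Sh}_1$, and the Hecke multiplicativity of the newform $f$.

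First I would confirm the level and character of $F$ using the transformation law for $f\mid V_{4p^\mu}$ combined with the known transformation of $\theta(\chi_r;z)$ stated in \eqref{gen_theta}. The $N$-th Fourier coefficient of $F$ is
\[
c(N) = \sum_{\substack{m \geq 1,\, l \in \Z \\ 4p^\mu m + l^2 = N}} a(m)\chi_r(l) l^\nu.
\]
Specializing to $N = n^2$ and substituting $a = (n+l)/2$, $b = (n-l)/2$ forces $a + b = n$, $ab = p^\mu m$, and $l = a - b$; the condition $p^\mu \mid ab$ emerges. Since $\chi_r$ has conductor $p^m$ with $\mu \geq m$, nonvanishing of $\chi_r(a - b)$ requires $p \nmid (a - b)$, so all $p$-divisibility must fall on exactly one of $a$, $b$. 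The hypothesis $\chi_r(-1) = (-1)^\nu$ makes the two cases contribute equally, yielding an overall factor of $2$ and producing a convolution-type formula for $c(n^2)$.

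Next I would compute $\textup{Sh}_1(F)$ via the Dirichlet series identity
\[
\sum_{n \geq 1} A(n) n^{-s} = L\!\left(s - k - \nu + 1, \psi\right) \sum_{n \geq 1} c(n^2) n^{-s},
\]
where $\psi$ is determined by the nebentypus of $F$ and $\chi_{-4}^{k+\nu}$, and then apply Hecke multiplicativity $a(ab) = \sum_{d \mid \gcd(a,b)} \chi(d) d^{k-1} a(ab/d^2)$ to factor the sum over pairs $(a',b)$ with $p^\mu a' + b = n$. For $\nu = 0$ this produces the Dirichlet series of $g(z) = f(z) f(p^\mu z)$; for $\nu = 1$ the factor $(p^\mu a' - b)$ produced by $l^\nu$ promotes this convolution to the Rankin-type bracket given in the statement. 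The factor $\chi_r(b)$, which equals $\chi_r(n)$ since $p^\mu \mid p^\mu a'$ and $\mu \geq m$, implements the twist of $g$ by $\chi_r$ to produce $G$.

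The main obstacle is the prime-$2$ bookkeeping that produces the correction term $-2^{k+\nu-1}\chi_r(2)\chi(2) G(2z)$. The Euler factor at $2$ supplied by the Shimura lift (through $L(s - k - \nu + 1, \psi)$ together with the parity split $l\equiv n\pmod{2}$) does \emph{not} match the Euler factor of $G$ at $2$; the discrepancy is precisely a factor of $(1 - \chi(2)\chi_r(2)\, 2^{k+\nu-1-s})$, and clearing this on the level of Dirichlet series is equivalent to the subtraction of $2^{k+\nu-1}\chi_r(2)\chi(2) G(2z)$ in $q$-series. A parallel but cleaner check at the prime $p$ reproduces the twisted Euler factor of $f$, and the remaining primes $q \nmid 2Nrp$ are handled uniformly by Hecke multiplicativity. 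Once Euler factors are matched at every prime, the identity follows from uniqueness of the Dirichlet series expansion.
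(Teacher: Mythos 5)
The paper does not actually prove Theorem~\ref{Cipra}: it is imported verbatim from Cipra \cite{cipra} as background, so there is no internal proof to compare against. That said, your outline reproduces the standard Selberg--Cipra argument, which is exactly how the result is established in the source: expand $F=f(4p^{\mu}z)\theta(\chi_r;z)$, restrict to square indices, substitute $a=(n+l)/2$, $b=(n-l)/2$ to convert representations $4p^{\mu}m+l^{2}=n^{2}$ into factorizations $a+b=n$, $ab=p^{\mu}m$, use the conductor of $\chi_r$ to force $p^{\mu}$ to divide exactly one of $a,b$ (the two cases pairing up via $\chi_r(-1)=(-1)^{\nu}$), feed this into the Dirichlet-series definition of $\textup{Sh}_1$ with twisting character $\chi\chi_r$, and match Euler factors using Hecke multiplicativity. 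Your diagnosis of the correction term is the right one: the constraint $l\equiv n\pmod 2$ removes the opposite-parity decompositions of even $n$, and this is precisely what produces the missing Euler factor $\bigl(1-\chi(2)\chi_r(2)2^{k+\nu-1-s}\bigr)$, i.e.\ the subtraction of $2^{k+\nu-1}\chi_r(2)\chi(2)G(2z)$. Two small cautions. First, the multiplicativity relation you invoke is mis-stated: the identity is $a(m)a(n)=\sum_{d\mid(m,n)}\chi(d)d^{k-1}a(mn/d^{2})$ as in \eqref{multiplicativity}, not $a(mn)=\sum_{d\mid(m,n)}\chi(d)d^{k-1}a(mn/d^{2})$; what you actually need is to expand the coefficients of $g(z)=f(z)f(p^{\mu}z)$ (resp.\ the bracket when $\nu=1$) by this relation and match the resulting double divisor sum against $A(n)=\sum_{d\mid n}\chi\chi_r(d)d^{k+\nu-1}c(n^{2}/d^{2})$. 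Second, the Euler-factor matching at $2$ and at $p$ is the only genuinely delicate bookkeeping and is asserted rather than carried out, but the mechanism you describe is correct, so this is a matter of execution rather than a gap in the idea.
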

In 2007, Hansen and Naqvi \cite{HN} generalized Cipra's work by considering Shimura lifts of theta functions of arbitrary Dirichlet character modulo any positive integer multiplied by Hecke eigenforms.

\begin{theorem}\label{HNN}
Let $\chi_{r}$ be a Dirichlet character modulo $r$, and write $\displaystyle{\chi_{r} = \chi_{r_{1}} \chi_{r_{2}} \cdots \chi_{r_{l}}}$, where $\chi_{r_{i}}$ is a Dirichlet character modulo $p_{i}^{\alpha_{i}}$, for $1 \leq i \leq l$. Let $f(z) \in {M}_{k}(N, \chi)$ be a normalized Hecke eigenform, and define
\[
F(z) := f(4rz)\theta(\chi_{r};z) \in {S}_{k + \nu + 1/2} \left( 4 N'r^{2}, \chi \chi_{r} \chi_{-4}^{k + \nu} \nu_{\theta}^{2(k + \nu) + 1} \right) \: \text{with} \: N' = \frac{N}{(N,r)}.
\]
Further, define $g(z)$ as follows:
\[
g(z) = \begin{cases}
    \displaystyle{\sum_{\substack{d \mid r \\ (d,r/d) = 1}}} \chi_{d}(-1) f(dz) f(rz/d), & \text{if} \ \nu = 0, \\
    \displaystyle{\frac{1}{\pi i} \sum_{\substack{d \mid r \\ (d,r/d) = 1}}} \chi_{d}(-1) d f'(dz) f(rz/d), & \text{if} \ \nu = 1.
\end{cases}
\]
Let $g_{\chi_{r}}(z)$ be the $\chi_{r}$-twist of $g(z)$. Then, we have the Shimura image as follows:
\begin{equation} \label{HH}
\textup{Sh}_{1}(F(z)) = g_{\chi_{r}}(z) - 2^{k + \nu - 1} \chi(2) \chi_{r}(2) g_{\chi_{r}}(2z) \in {S}_{2(k + \nu)} (2 N' r^{2}, \chi_{r}^{2} \chi^{2}).
\end{equation}
\end{theorem}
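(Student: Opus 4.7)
The plan is to compute the Fourier expansion of $F(z) = f(4rz)\theta(\chi_r;z)$ and apply the Dirichlet-series definition of the Shimura lift, using Hecke multiplicativity of $f$ together with the factorization $\chi_r = \chi_{r_1}\cdots\chi_{r_l}$ to simplify. Writing $F(z) = \sum_{n \geq 1} A(n)q^n$, one has
\[
A(n) = \sum_{\substack{m \geq 0,\, j \in \Z \\ 4rm + j^2 = n}} a(m)\,\chi_r(j)\, j^\nu,
\]
and the coefficients $B(n)$ of $\textup{Sh}_1(F)$ satisfy
\[
\sum_{n \geq 1} B(n)\, n^{-s} = L\!\bigl(s - k - \nu + 1,\, \psi\bigr)\,\sum_{n \geq 1} A(n^2)\, n^{-s},
\]
where $\psi$ is the Dirichlet character attached to the nebentypus of $F$ by Shimura's definition.

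The first main step is to evaluate $A(n^2)$ by factoring $4rm = (n-j)(n+j)$ and invoking Hecke multiplicativity of $a(m)$, which converts the sum into a convolution over pairs of divisors of $n$. The sum over $j$ then splits via the Chinese Remainder Theorem applied to the prime-power moduli $p_i^{\alpha_i}$, and the factorization $\chi_r = \prod_i \chi_{r_i}$ collects the surviving contributions into a sum indexed by unitary divisors $d \mid r$ with $(d, r/d) = 1$. This is exactly the index set appearing in the definition of $g(z)$. The second step is to match the resulting Dirichlet series with that of $g_{\chi_r}(z) - 2^{k+\nu-1}\chi(2)\chi_r(2)\, g_{\chi_r}(2z)$; the subtraction at $2$ comes from the local factor at the prime $2$ and is precisely the phenomenon already present in Selberg's Theorem \ref{Selberg} and Cipra's Theorem \ref{Cipra}.

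An alternative, cleaner route is induction on the number $l$ of distinct prime divisors of $r$: the base case $l = 1$ is Cipra's Theorem \ref{Cipra}, and the induction step introduces one more prime-power character factor at a time, using the compatibility of the Shimura lift with multiplication by a prime-power theta series. The main technical obstacle in either approach is the bookkeeping at primes dividing $(N, r)$ and at $p = 2$: here the Hecke eigenvalue $a(p)$, the character value $\chi_r(p)$, and the unitary-divisor support condition $(d, r/d) = 1$ interact delicately, and verifying that all contributions conspire to give precisely the proposed closed form is where most of the effort lies. When $\nu = 1$, the derivative factor $d f'(dz)$ in $g(z)$ adds an additional combinatorial layer coming from the weight $j^\nu$ in $\theta(\chi_r;z)$, but this is handled by a standard manipulation that turns $j$-weighted sums into derivatives of the underlying $q$-series.
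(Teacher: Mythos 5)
This statement is Hansen--Naqvi's theorem, which the paper quotes from \cite{HN} as background in the introduction and does not prove; there is no in-paper argument to compare against, so your proposal can only be measured against the standard proof in the literature. Your primary route is exactly that standard (Selberg--Cipra--Hansen--Naqvi) strategy, and the key ideas are correctly identified: computing $A(n^2)$ via the factorization $4rm = (n-j)(n+j)$, using the condition $(j,r)=1$ imposed by $\chi_r(j)$ together with $p_i^{\alpha_i} \mid u(u+j)$ to force, via CRT at each $p_i^{\alpha_i}$, the unitary-divisor decomposition $r = d\cdot(r/d)$ with $(d,r/d)=1$, and recognizing the $-2^{k+\nu-1}\chi(2)\chi_r(2)\,g_{\chi_r}(2z)$ term as the leftover Euler factor at $2$, just as in Theorems \ref{Selberg} and \ref{Cipra}.

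That said, what you have written is a plan rather than a proof: the passages where you say the verification ``is where most of the effort lies'' are precisely the content of the theorem, namely showing that after applying Hecke multiplicativity \eqref{multiplicativity} the double sum over $(u, u+j)$ and over $d\mid(u, u+j)$ reassembles into the Dirichlet series of $g_{\chi_r}(z) - 2^{k+\nu-1}\chi(2)\chi_r(2)g_{\chi_r}(2z)$, including the bookkeeping at primes dividing $(N,r)$ and the $\chi_d(-1)$ signs (which arise from replacing $j$ by $-j$ on the component where $d \mid u$). One concrete caution about your ``cleaner'' alternative: the induction on $l$ as sketched relies on a ``compatibility of the Shimura lift with multiplication by a prime-power theta series,'' but $\theta(\chi_r;z)$ is \emph{not} the product of the $\theta(\chi_{r_i};z)$ (the exponents $q^{j^2}$ do not multiply out that way), and the Shimura lift is not multiplicative over products of half-integral weight forms, so that induction step has no obvious mechanism and would need to be replaced by something substantive. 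I would drop the alternative route and commit to completing the direct computation.
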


 In 2021, Pandey and B. Ramakrishnan \cite{PR} obtained a generalization of Hansen and Naqvi's work for the $t$-th Shimura lifts for any positive square-free integer $t$.
Their results are conditional upon the fact that the level of the half-integer weight forms (whose $t$-th Shimura image is being considered) should be divisible by $t$.

In a recent work, Xue \cite{huixue} proved a Selberg-type identity related to the Rankin-Cohen Bracket as defined in \eqref{defn_Rankin-Cohen} of $\textup{Sh}^{+}_{1}$ of a normalized Hecke eigenform and the theta function where  $\textup{Sh}^{+}_{1}$ denotes the Shimura lift of the Kohnen's plus-space. Furthermore, Wang \cite{wang} obtained a generalization of Xue's work for the $D$-th Shimura lift $\textup{Sh}^{+}_{D}$ of normalized Hecke eigenform and generalized theta functions $\theta(\chi_{r};Dz)$,  for a fundamental discriminant $D$ where $D \mid N$ and $(D, r) = 1$ . More generally, he showed that $t$-th Shimura lift $\Sh_t$ of a normalized Hecke eigenform and $\theta(\chi_{r};tz)$, for a square-free integer $t$ with $t \mid N$ and $(t, r) = 1$.

\subsection{Notation and Statement of Results}
We now turn to our work. We require some notation. Let $\chi$ be a Dirichlet character modulo $N \geq 1$ and let $r \in \mathbb{Z}$. We note that $k \in \Z$ if and only if $r$ is even and $k \in \Z + \frac{1}{2}$ if and only if $r$ is odd. For $k \geq 0$, let $M_{k + 1/2}(N, \chi \nu_{\eta}^{r})$ denote the space of modular forms of weight $k + 1/2$ on $\Gamma_{0}(N)$, transforming with respect to the multiplier $\chi \nu_{\eta}^{r}$. These spaces are trivial unless $\chi(-1) = \left( \frac{-1}{r} \right) (-1)^{k}$. Equivalently, $M_{k + 1/2}(N, \chi \nu_{\eta}^{r}) = \{0\}$ unless $r$ is odd. 

\medskip
Let $(N, 6) = 1$. We define 
\begin{equation}\label{eq:epdef}
\ep_{2, r, \chi} =-\chi(2)\pmfrac{8}{r/(r,3)}, \ \ \ \ 
\ep_{3, r, \chi} =-\chi(3)\pmfrac{12}{r}.
\end{equation}
We let $S_{2k}^{\new 2, 3}(6N, \chi^2, \epsilon_{2, 1, \chi}, \epsilon_{3, 1, \chi})$ denote the space of cusp forms of weight $2k$ on $\Gamma_{0}(6N)$, that are new at primes 2 and 3 as defined in Lemma \ref{new_criterion}, and eigenfunctions for $W_{2}^{6N}$ and $W_{3}^{6N}$  with eigenvalues $\epsilon_{2, r, \chi}, \epsilon_{3, r, \chi}$, resp.

\medskip

Let $f$ and $g$ be two smooth functions on $\C$. For given real numbers $k_{1}, k_{2}$, and a non-negative integer $w$, we define the $w$-th Rankin-Cohen Bracket \cite{zagier} of $f$ and $g$ by  
\begin{equation} \label{defn_Rankin-Cohen}
[f, g]_{w}(z) = \sum_{r = 0}^{w} (-1)^{r} {w + k_1 - 1 \choose w - r}{w + k_2 - 1 \choose r} f^{(r)}(z) g^{(w - r)}(z),
\end{equation}
where $f^{(r)}(z) = \frac{1}{(2 \pi i)^r} \frac{d^r(f(z))}{dz^{r}} = (\Theta^{r}f)(z)$ denotes the normalized $r$-th derivative of $f$ with respect to $z$. In particular,
\begin{equation}\label{rankin_01}
[f, g]_0 = f g, \quad [f,g]_1 = k \: \left(f(z) \Theta(g(z)) - \Theta(f(z)) g(z)    \right)    
\end{equation}

We observe that Rankin-Cohen brackets defined in \cite{Cohen} are connected to \eqref{defn_Rankin-Cohen} via $F_{w}^{(k_1, k_2)} = (-1)^{w} w! [f(z), g(z)]_{w}$. Further, if we let $f$ and $g$ are two modular forms of weights $k_{1}, k_{2} \in \Z$ or $\Z + \frac{1}{2}$ resp. on any congruence subgroup $\Gamma$ of $\SL_{2}(\Z)$, then by Theorem 7.1 of \cite{Cohen}, we have $[f, g]_{w} \in M_{k_1 + k_2 + 2w}(\Gamma)$. Moreover, if $w \geq 1$, $[f, g]_{w}$ is a cusp form.

\medskip

Let $r$ be an odd integer with $1 \leq r \leq 23$. Let $\mathcal{A}_{r, s, N, \chi}$ be as defined above. Our next result provides a list of explicit formulas for the first Shimura image $\mathcal{S}_{1}$ as defined in Theorem \ref{AAD1} and Theorem \ref{AAD2} corresponding to eta-quotients that are theta functions associated with the eta-multiplier on spaces $\mathcal{A}_{1,k, N, \chi}$ and $\mathcal{A}_{3,k, N, \chi}$, respectively.

\begin{theorem} \label{first_Shimura_image}
 Let $g(z) \in M_{k}(N, \chi)$ be a normalized Hecke eigenform for all $T_p$. Let $\tilde{g}(z) = (g \mid U_{2} \mid V_{2})(z)$.

 \begin{enumerate}
 \item The Shimura image $\Sh_{1}$ for the product of weight 1/2 eta-quotients with the eta-multiplier as specified in Table \ref{tab:eta_even} and a normalized Hecke eigenform is given by \label{first_Shimura_image:1}:
 \vspace{ 2mm}
\begin{enumerate}[label=(\alph*)]
 \item  $\mathcal{S}_{1}(\eta(z) g(z)) = g(z) g(6z) - g(2z) g(3z).$ \label{first_Shimura_image:1a}
\item $\Sh_{1} \left( \frac{\eta(z) \eta(4z)}{\eta(2z)} \: g(z)  \right) = (2  g(z) \tilde{g}(2z) - g(z) g(2z)) \otimes \left( \frac{-8}{\cdot} \right).$ \label{first_Shimura_image:1b}
\item $\Sh_{1} \left( \frac{\eta^{2}(2z)}{\eta(z)} \: g(z)  \right) = (g(z) g(2z)) \otimes \left( \frac{-4}{\cdot} \right).$ \label{first_Shimura_image:1c}
\item $\Sh_{1} \left( \frac{\eta(2z) \eta^{2}(3z)}{\eta(z) \eta(6z)} \: g(z)  \right) = (g(z) g(6z) + g(2z) g(3z)) \otimes \left( \frac{12}{\cdot} \right).$ \label{first_Shimura_image:1d}
\item $\Sh_{1} \left( \frac{\eta^{3}(2z)}{\eta(z) \eta(4z)} \: g(z)  \right)$ = 
\[
\left( 2 g(z) \tilde{g}(6z) - g(z) g(6z) \right) \otimes \left( \frac{8}{\cdot} \right) - \left(g(2z) \cdot \left(g(3z) \otimes \left(\mfrac{-4}{\cdot}\right) \right) \right) \otimes \left( \frac{-8}{\cdot} \right).
\] \label{first_Shimura_image:1e}
\item $\Sh_{1} \left( \frac{\eta(z) \eta(4z) \eta^{5}(6z)}{\eta^{2}(2z) \eta^{2}(3z) \eta^{2}(12z)} \: g(z)  \right)$ = 
\[
\left( 2  g(z) \tilde{g}(6z) - g(z) g(6z) \right) \otimes \left( \mfrac{24}{\cdot} \right) + \left(g(2z) \left(g(3z) \otimes \left( \mfrac{-4}{\cdot} \right) \right) \right) \otimes \left( \frac{-24}{\cdot} \right).
\]
\label{first_Shimura_image:1f}
\end{enumerate}
\item The Shimura image $\Sh_{1}$ for the product of weight 3/2 eta-quotients with the eta-multiplier as specified in Table \ref{tab:eta_odd} and a normalized Hecke eigenform is given by \label{first_Shimura_image:2}:
\vspace{ 2mm}
\begin{enumerate}[label=(\alph*)]
   \item $\mathcal{S}_{1} (\eta^{3}(z) g(z)) = \mfrac{1}{k} \left([g(2z), g(z)]_{1}\right)$. \label{first_Shimura_image:2a}
\item $\Sh_{1} \left( \frac{\eta^9(2z)}{\eta^{3}(z) \eta^{3}(4z)} \: g(z)  \right) = 
\mfrac{1}{k} \left( 2 \: [\tilde{g}(2z), g(z)]_{1}  - [g(2z), g(z)]_{1} \right) \otimes \left( \frac{8}{\cdot} \right)$. \label{first_Shimura_image:2b}
\item $\Sh_{1} \left( \frac{\eta^{5}(z)} {\eta^{2}(2z)} \: g(z)  \right) = \mfrac{1}{k} \left([g(6z), g(z)]_{1}  + [g(3z), g(2z)]_{1} \right) \otimes \left( \frac{-4}{\cdot} \right)$. \label{first_Shimura_image:2c}
\item $\Sh_{1} \left( \frac{\eta^{13}(2z)}{\eta^{5}(z) \eta^{5}(4z)} \: g(z)  \right) = $
\[
\mfrac{1}{k} \left( 2 \: [\tilde{g}(6z), g(z)]_{1}  - [g(6z), g(z)]_{1} \right) \otimes \left( \frac{-8}{\cdot} \right) + \mfrac{1}{k} \left( \left[ g(3z) \otimes \left( \mfrac{-4}{\cdot} \right), g(2z)  \right]_{1} \right) \otimes \left( \frac{8}{\cdot} \right).
\] \label{first_Shimura_image:2d}
\end{enumerate}
\end{enumerate}
\end{theorem}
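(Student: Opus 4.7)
The basic strategy is to identify each weight $1/2$ or weight $3/2$ eta-quotient appearing in Tables \ref{tab:eta_even} and \ref{tab:eta_odd} as a (generalized) theta series $\theta(\chi_r; Dz)$ for a suitable character $\chi_r$ and integer $D$, and then reduce the computation of $\mathcal{S}_1$ to the setting of Theorem \ref{HNN}. The classical identities of Euler (for $\eta(z)$) and Jacobi (for $\eta^3(z)$), together with their analogues for the other listed eta-quotients, should supply these identifications in every case; the choice $\nu = 0$ corresponds to the weight $1/2$ quotients in part \eqref{first_Shimura_image:1}, and $\nu = 1$ corresponds to the weight $3/2$ quotients in part \eqref{first_Shimura_image:2}, explaining why the Rankin-Cohen brackets $[\cdot,\cdot]_1$ appear only in the latter.

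Next I would translate from $\mathcal{S}_1$ (associated to the eta-multiplier) to the classical Shimura lift $\textup{Sh}_1$ (associated to the theta-multiplier) via the $V_{24}$ operator and a twist by $\chi_{12}$, exactly as in Yang's Theorem \ref{Yang}; the compatibility of the AAD lift with $V_{24}$ from Theorems \ref{AAD1} and \ref{AAD2} makes this translation available coefficient-by-coefficient. Once the image is rewritten in this form, Theorem \ref{HNN} applies directly: its $\nu = 0$ formula $\sum_{d\mid r}\chi_d(-1)\,f(dz)\,f((r/d)z)$ produces the shapes $g(az)g(bz)$ on the right of part \eqref{first_Shimura_image:1}, while its $\nu = 1$ formula $\tfrac{1}{\pi i}\sum_{d\mid r}\chi_d(-1)\,d\,f'(dz)\,f((r/d)z)$ produces the Rankin-Cohen brackets on the right of part \eqref{first_Shimura_image:2}. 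The correction term ``$g(z) - 2^{k+\nu-1}\chi(2)\chi_r(2)g(2z)$'' in Theorem \ref{HNN} is what yields the combinations $2\,g(z)\tilde{g}(2z) - g(z)g(2z)$ involving $\tilde{g} = g\mid U_2\mid V_2$.

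With a candidate identity in hand, the final step is to verify it using Yang's Theorem \ref{Yang}, which guarantees multiplicity one on the relevant newform space $S^{\new 2,3}_{2k+r-1}(6,\chi^2,\epsilon_{2,1,\chi},\epsilon_{3,1,\chi})$ with the Atkin-Lehner eigenvalues prescribed in \eqref{eq:epdef}; it therefore suffices to match a finite set of Fourier coefficients (up to the Sturm bound). Parts \eqref{first_Shimura_image:1a} and \eqref{first_Shimura_image:2a} serve as the prototype, while the twisted cases follow by character twisting and level raising. The more involved cases, notably parts \eqref{first_Shimura_image:1e}, \eqref{first_Shimura_image:1f} and \eqref{first_Shimura_image:2d}, call for the underlying eta-quotient to decompose as a sum of two generalized theta series with characters of different moduli, which in turn requires combining two applications of Theorem \ref{HNN}.

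The main obstacle will be the meticulous bookkeeping of multipliers, characters, and levels: correctly matching $\nu_\eta^r$ with the theta-multiplier via $V_{24}$, tracking the Kronecker-symbol twists $\left(\frac{-8}{\cdot}\right)$, $\left(\frac{-4}{\cdot}\right)$, $\left(\frac{12}{\cdot}\right)$ and $\left(\frac{24}{\cdot}\right)$ that decorate the formulas, and verifying the Atkin-Lehner eigenvalues $\epsilon_{2,r,\chi}$ and $\epsilon_{3,r,\chi}$ against Yang's prediction. The mixed-modulus theta decompositions for the longer sums, together with the interplay between the $U_2, V_2$ shifts and the twisting, constitute the most delicate parts of the calculation.
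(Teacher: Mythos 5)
There is a genuine gap at the central step of your plan: Theorem \ref{HNN} does not apply to the forms in question. Hansen--Naqvi compute $\textup{Sh}_1$ of $f(4rz)\,\theta(\chi_r;z)$, where the dilation of the eigenform is forced to be $4r$ with $r$ the modulus of the theta character. After applying $V_{24}$ to $\eta(z)g(z)$ you obtain $g(24z)\,\theta(\chi_{12};z)$ with $\chi_{12}=\left(\frac{12}{\cdot}\right)$ of modulus $r=12$, so Theorem \ref{HNN} would require $g(48z)$, not $g(24z)$; for $\frac{\eta^3(2z)}{\eta(z)\eta(4z)}$ the character has modulus $24$ and the mismatch is $g(24z)$ versus $g(96z)$, and similarly it is $g(8z)$ versus $g(16z)$ in the weight $3/2$ cases. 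This mismatch is precisely why the paper does not route the computation through Hansen--Naqvi: it instead applies the explicit coefficient formula \eqref{image_24} of Theorem \ref{AAD1} directly to $\sum b(t)q^{t/24}$, analyzes the residue classes of $m$ modulo $24$ in Lemma \ref{simplify_sum}, and resums via the Hecke multiplicativity \eqref{multiplicativity}. In that argument the terms $\tilde g$ and the twists by $\left(\frac{-4}{\cdot}\right)$ arise from the decomposition \eqref{simplify_g_(a,b)} of $g$ into arithmetic progressions, not from a decomposition of the eta-quotient into two theta series: in parts (1.e), (1.f) and (2.d) the input is a single theta series of modulus $24$, and the two-term answer comes from splitting the sum over $m\equiv\pm\frac{n}{d},\pm(\frac{n}{d}-6),\pm(\frac{n}{d}-12),\pm(\frac{n}{d}-18)\pmod{24}$.

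Your proposed verification step also does not close the argument. Yang's Theorem \ref{Yang} concerns only level-one inputs $F\in\mathcal{A}_{r,s,1,1_N}$, whereas Theorem \ref{first_Shimura_image} is asserted for eigenforms $g$ of arbitrary level $N$ and character $\chi$; moreover the right-hand sides such as $g(z)g(6z)-g(2z)g(3z)$ are in general not Hecke eigenforms, so multiplicity one does not pin them down, and a Sturm-bound comparison is a different finite computation for each $k$ and $N$, hence cannot establish the identity uniformly in $g$. A further loss occurs in your translation step: recovering $\mathcal{S}_1$ from $\textup{Sh}_1(\,\cdot\,)\otimes\left(\frac{12}{\cdot}\right)$ says nothing about the coefficients $B_1(n)$ with $(n,6)>1$, which the paper must recover separately using the fact that the image is new at $2$ and $3$ (Lemma \ref{2-3-new}). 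To carry out any symbolic coefficient check you would in the end need the formula \eqref{image_24}, at which point the paper's direct computation is both necessary and sufficient.
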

Here, $\Theta$ refers to the Theta operator, as defined in \eqref{defn_theta}. We recall that $\Theta(f(z)) = \frac{1}{2 \pi i} \frac{df(z)}{dz}$.

\medskip

\begin{theorem}\label{first_image}
Let $g \in M_k(N, \chi)$ be a normalized Hecke eigenform for all $T_p$.
 \begin{enumerate}
    \item  Let $(N, 6) = 1$. Then we have
    \[
    \mathcal{S}_{1}(\eta(z) g(z)) = g(z) \: g(6z) - g(2z) \: g(3z) \in S_{2k}^{\new 2, 3} \: \left(6N, \chi^2, \epsilon_{2, 1, \chi} , \epsilon_{3, 1, \chi} \right).
    \]
    \item Let $(N, 2) = 1$, Then we have
    \[
    \mathcal{S}_{1} (\eta^{3}(z) g(z)) = g(2z) \: \Theta(g(z)) - g(z) \: \Theta(g(2z)) \in S_{2k + 2}^{\new 2} \: \left(2N, \chi^2, \epsilon_{2, 3, \chi} \right).
    \]
\end{enumerate}
\end{theorem}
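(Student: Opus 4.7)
The explicit formula in each part is obtained from Theorem \ref{first_Shimura_image}: part (1) of the current theorem is exactly part (1)(a) of that theorem, and part (2) follows from part (2)(a) once one expands the Rankin-Cohen bracket via \eqref{rankin_01} to get $\tfrac{1}{k}[g(2z),g(z)]_1 = g(2z)\Theta(g(z)) - g(z)\Theta(g(2z))$. For containment in $S_{2k}(6N,\chi^2)$ (respectively $S_{2k+2}(2N,\chi^2)$), I observe that each $g(dz)$ with $d \mid 6$ (respectively $d \mid 2$) lies in $S_k(6N,\chi)$ (respectively $S_k(2N,\chi)$), so any product $g(az)g(bz)$ with $a,b \mid 6$ lies in $S_{2k}(6N,\chi^2)$; in part (2), the function is (up to a factor of $1/k$) a first Rankin-Cohen bracket of two weight-$k$ forms on $\Gamma_0(2N)$, and hence is a cusp form of weight $2k+2$ on $\Gamma_0(2N)$ with character $\chi^2$, by the modularity property of $[\,\cdot\,,\,\cdot\,]_w$ recalled after \eqref{defn_Rankin-Cohen}.

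To identify the Atkin-Lehner eigenvalues, the plan is to compute $W_Q^M$ directly on each summand, where $(Q,M) \in \{(2,6N),(3,6N)\}$ in part (1) and $(Q,M)=(2,2N)$ in part (2). The key mechanism is that, for a prime $p \nmid N$, a representative of $W_p^{pN}$ can be written as $\sigma \cdot \mathrm{diag}(p,1)$ with $\sigma \in \Gamma_0(N)$; this yields an explicit formula for $g(z)\big|W_p^{pN}$ as a scalar multiple of $g(pz)$, with scalar determined by $\chi(\sigma_{22})$ and $p^{k/2}$. Iterating this for $p=2$ and $p=3$ in part (1) produces the action of $(W_2^{6N}, W_3^{6N})$ on the four-dimensional oldform space $\langle g(dz) : d \mid 6 \rangle \subseteq S_k(6N,\chi)$; substituting into the bilinear combination then reveals that $g(z)g(6z) - g(2z)g(3z)$ is a simultaneous eigenvector with eigenvalues $(-\chi(2), -\chi(3))$, matching $(\epsilon_{2,1,\chi}, \epsilon_{3,1,\chi})$. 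Part (2) is analogous, but the presence of $\Theta$ requires tracking the non-modular correction term produced by $\Theta$ under the weight $k+2$ slash action; the antisymmetric combination defining the first Rankin-Cohen bracket is precisely what cancels this correction, so $W_2^{2N}$ acts cleanly with eigenvalue $-\chi(2)=\epsilon_{2,3,\chi}$.

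Newness at primes $2$ and $3$ in part (1) (respectively at $2$ in part (2)) will then follow from Lemma \ref{new_criterion}: I would verify its hypotheses by computing the degeneracy traces of the candidate form from $6N$ down to $3N$ and $2N$ (respectively from $2N$ down to $N$) and checking that they vanish, which reduces to identities among the shifted forms $g(dz)$ and their $U_p$-images. The main obstacle is the Atkin-Lehner calculation in the preceding paragraph: one must track several coupled factors, namely the character values $\chi(2)$ and $\chi(3)$, the powers $p^{k/2}$, and the convention for $W_Q^M$, and confirm that they collapse to precisely $-\chi(p)$ rather than $+\chi(p)$ or a twisted variant. The antisymmetric structure of $g(z)g(6z)-g(2z)g(3z)$ and of the Rankin-Cohen bracket is essential here, since a symmetric combination would yield the opposite Atkin-Lehner sign.
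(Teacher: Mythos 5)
Your treatment of the membership statement --- computing the Atkin--Lehner action on each $g(dz)$ by factoring a representative of $W_p^{6N}$ (resp.\ $W_2^{2N}$) as an element of $\Gamma_0(3N)$ times $\mathrm{diag}(p,1)$, observing that the antisymmetric combination is an eigenvector with eigenvalue $-\chi(p)$, using the bracket structure to cancel the quasi-modular correction from $\Theta$ in part (2), and verifying newness through the trace criterion of Lemma \ref{new_criterion} --- is essentially the paper's argument. One caveat: the second trace condition $\mathrm{Tr}_{3N}^{6N}(G\mid H_{6N})=0$ is the delicate half of the newness check; the paper must first decompose $g$ into newforms via Proposition \ref{decompose_newform} in order to control $g\mid_k H_N$, and your sketch does not acknowledge that this input is needed.

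The genuine gap is in how you obtain the identity $\mathcal{S}_1(\eta(z)g(z)) = g(z)g(6z)-g(2z)g(3z)$ itself. You quote it from Theorem \ref{first_Shimura_image}, but within this paper that case is not proved independently: the explicit proof given there is of case (1.e), where the final answer carries twists by $\left(\frac{8}{\cdot}\right)$ and $\left(\frac{-8}{\cdot}\right)$, so the coefficients at the problematic indices vanish on both sides. For $\eta(z)g(z)$ the direct computation from \eqref{image_24} only determines $B_1(n)$ for $(n,6)=1$, because the factor $\left(\frac{12}{n/d}\right)$ (and the support of $b$) kills every divisor $d$ of $n$ with $(n/d,6)>1$; what survives for $n=2^u3^vx$ with $(x,6)=1$ is $B_1(n)=\chi(2)^u\chi(3)^v(2^u3^v)^{k-1}c(x)$ rather than $c(n)$. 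Closing the identity at $n$ divisible by $2$ or $3$ therefore requires knowing that both $G$ and $\mathcal{S}_1(\eta(z) g(z))$ are new at $2$ and $3$ with Atkin--Lehner eigenvalues $-\chi(2)$, $-\chi(3)$, so that Lemma \ref{2-3-new} forces $c(2^u3^vx)=\chi(2)^u\chi(3)^v(2^u3^v)^{k-1}c(x)$ and likewise for $B_1$; the newness argument is a prerequisite for the formula, not an independent add-on. Your proposal contains all the ingredients --- you do prove $G$ is $2,3$-new --- but as written the identity is either circular (Theorem \ref{first_Shimura_image}(1)(a) is the very equality being proved) or left unverified at the coefficients $n$ with $(n,6)>1$ (resp.\ $n$ even in part (2)).
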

{Remarks:}
We note that for $(N, 6) = 3$, we have $S_1(\eta(z) g(z)) \in S_{2k}^{\new 2}(6N, \chi^2, \epsilon_{2, 1, \chi})$ and for $(N, 6) = 2$, we have $S_1(\eta(z) g(z)) \in S_{2k}^{\new 3}(6N, \chi^2, \epsilon_{3, 1, \chi})$.

\medskip

Our next result focuses on explicit formulas for the $r$-th Shimura image on the subspaces $\mathcal{A}_{r,s, 1, 1_N}$. Let $1 \leq r \leq 23$ with $(r, 6) = 1$. Let $s \geq 0$ be even and $k = (r - 1)/2 + s$. 
We observe that $M_{k}\left(r, \left( \frac{\cdot}{r} \right)\right)$ has a basis $\{g_{1}, \ldots, g_{d}\}$ of normalized Hecke eigenforms with 
    \begin{align*}
    g_{1} = \mfrac{L\left(1 - k, \left( \mfrac{\cdot}{r} \right)\right)}{2} + \sum_{n \geq 1} \sum_{d \mid n} \left( \frac{d}{r} \right) d^{k - 1} q^{n} \:\: \: \: , \:\: \: \:
    g_{2} = \sum_{n \geq 1} \sum_{d \mid n} \left( \frac{n/d}{r} \right) d^{k - 1} q^{n},
    \end{align*}
    and newforms $g_{i}$ for $3 \leq i \leq d$.
    
Let $f(z) \in M_{s}(1)$. Then, there exists $\alpha_{i}$ for $1 \leq i \leq d$ such that
    \begin{equation} 
    \left( \frac{\eta^{r}(z)}{\eta(rz)} f(z) \right) \mathrel{\bigg|_{k}} U_{r} = \sum_{i = 1}^{d} \alpha_i g_{i}(z).
    \end{equation}
For all $1 \leq i \leq d$, let 
$G_{i}(z) = g_{i}(z) g_{i}(6z) - g_{i}(2z) g_{i}(3z) \in S_{2k}(6r).$
\begin{theorem} \label{image_(r,6) = 1} Assume the notation above. 
    \begin{enumerate}
        \item For all $1 \leq i \leq d$, we have $G_{i}(z) \in S_{2k}^{\new 2, 3}\left(6r, -\left( \frac{8}{r} \right), - \left( \frac{12}{r} \right) \right)$.
       
        \item  $\mathcal{S}_{r}(\eta^r(z) f(z)) = \displaystyle{\sum_{i = 1}^{d}} \alpha_{i} G_{i} \in S_{2k}^{\new}\left(6, -\left( \frac{8}{r} \right), - \left( \frac{12}{r} \right) \right).$ 
      
 \end{enumerate}
\end{theorem}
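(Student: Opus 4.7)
The plan is to deduce Theorem \ref{image_(r,6) = 1} from Theorem \ref{first_image}(1) applied term-by-term to the Hecke basis $\{g_i\}$, combined with a compatibility identity between the $r$-th Shimura lift $\mathcal{S}_r$ on $\eta^r(z) f(z)$ and the first Shimura lift $\mathcal{S}_1$ applied to $\eta(z)$ times the $U_r$-image of an auxiliary integer-weight form. Part (1) will follow immediately from Theorem \ref{first_image}(1); part (2) will follow by linearity, with the newness at level $6$ provided by Yang's Theorem \ref{Yang}.

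For part (1), since $(r,6) = 1$ and each $g_i \in M_k\!\left(r,\left(\frac{\cdot}{r}\right)\right)$ is a normalized Hecke eigenform, I would apply Theorem \ref{first_image}(1) with $N = r$ and $\chi = \left(\frac{\cdot}{r}\right)$ to obtain
\[
G_i(z) = \mathcal{S}_1(\eta(z) g_i(z)) \in S_{2k}^{\new 2, 3}\!\left(6r,\,\chi^2,\,\epsilon_{2,1,\chi},\,\epsilon_{3,1,\chi}\right).
\]
The character $\chi^2$ is trivial on $(\Z/r\Z)^\times$, and from \eqref{eq:epdef} the eigenvalues $\epsilon_{2,1,\chi} = -\chi(2)$ and $\epsilon_{3,1,\chi} = -\chi(3)$ match the claimed values because $\left(\frac{8}{r}\right) = \left(\frac{2}{r}\right)^3 = \left(\frac{2}{r}\right)$ and $\left(\frac{12}{r}\right) = \left(\frac{4}{r}\right)\left(\frac{3}{r}\right) = \left(\frac{3}{r}\right)$.

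For part (2), set $h(z) := \frac{\eta^r(z)}{\eta(rz)} f(z)$, so that $\eta^r(z) f(z) = \eta(rz) h(z)$ and $h$ is an integer-weight eta-quotient. The crucial step is to establish the compatibility identity
\[
\mathcal{S}_r\!\left(\eta(rz) h(z)\right) \;=\; \mathcal{S}_1\!\left(\eta(z) \cdot \bigl(h \mathrel{\big|_k} U_r\bigr)\right),
\]
which I would prove by comparing Fourier-coefficient expansions on both sides using the Ahlgren--Andersen--Dicks definition of the Shimura lift: on the left, the $r$-th lift extracts the subsequence of coefficients of $\eta(rz) h(z)$ indexed by $rm^2$-positions, while on the right, replacing $\eta(rz)$ by $\eta(z)$ removes a $V_r$-twist on the theta-like factor, and the subsequent application of $U_r$ to $h$ restores precisely the same subsequence of coefficients. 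Substituting the hypothesized decomposition $h \mathrel{\big|_k} U_r = \sum_i \alpha_i g_i$ and invoking linearity together with part (1) yields
\[
\mathcal{S}_r(\eta^r(z) f(z)) = \sum_{i=1}^d \alpha_i \mathcal{S}_1(\eta(z) g_i(z)) = \sum_{i=1}^d \alpha_i G_i.
\]
The newness at level $6$ (rather than $6r$) is then given by Yang's Theorem \ref{Yang}(1), after reconciling the theta-multiplier lift $\textup{Sh}_r$ with the eta-multiplier lift $\mathcal{S}_r$ using the conversion established earlier in the paper.

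The principal obstacle is the compatibility identity relating $\mathcal{S}_r(\eta(rz) h(z))$ to $\mathcal{S}_1\!\left(\eta(z)\, (h\mathrel{\big|_k} U_r)\right)$. Although morally transparent from the $rm^2$-indexing underlying the Shimura correspondence, a rigorous derivation requires careful tracking of the quadratic twist character and the Dirichlet $L$-factor that appear in the AAD definition of the eta-multiplier Shimura lift, together with the precise action of $U_r$ and $V_r$ on eta-quotients of level dividing $r$; this is the step where one must work directly with the formulas defining $\mathcal{S}_r$.
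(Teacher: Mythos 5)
Your treatment of part (1) and of the identity $\mathcal{S}_{r}(\eta^r(z) f(z)) = \sum_{i=1}^{d}\alpha_i G_i$ follows the paper's route exactly: part (1) is Theorem \ref{first_image}(1) applied to each $g_i$ with $N=r$, $\chi = \left(\frac{\cdot}{r}\right)$, and your ``compatibility identity'' is precisely the composition of Lemma \ref{Comm_(r,6)=1}\ref{eq:Comm_S1:d} ($\mathcal{S}_t(F) = \mathcal{S}_1(F\mid U_t)$) with the elementary factorization $(\eta(rz)h(z))\mid U_r = \eta(z)\cdot(h\mid U_r)$ of Proposition \ref{U_factor}. There is no need to prove that identity from scratch by coefficient comparison; it splits into these two ingredients, both already available.

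Where you genuinely diverge from the paper is the descent from level $6r$ to level $6$, and this is also where your argument has a gap. Invoking Theorem \ref{Yang} is logically admissible (Yang's result has an independent proof), but it runs against the stated purpose of the paper, which is to give a constructive proof of Yang's theorem rather than to assume it; the paper instead proves the descent self-containedly by computing $\mathrm{Tr}_{6}^{6r}(\mathcal{S}_r(\eta^r f)) = (r+1)\,\mathcal{S}_r(\eta^r f)$, which requires the substantial Proposition \ref{comm_Sr_Wr} (the action of $W_r^{6r}$ on $\sum\alpha_i G_i$, computed via the Fricke involution, Gauss sum evaluations, and the relation between the coefficients $\alpha_i$ and the dual coefficients $\beta_i$ of $\frac{\eta^r(rz)}{\eta(z)}f(rz)$). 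Moreover, even granting Yang, your ``reconciliation'' step is incomplete as stated: the conversion $\textup{Sh}_r(F\mid V_{24}) = \mathcal{S}_r(F)\otimes\left(\frac{12}{\cdot}\right)$ together with Yang only yields $\mathcal{S}_r(F)\otimes\left(\frac{12}{\cdot}\right) = H\otimes\left(\frac{12}{\cdot}\right)$ for some $H$ in the level-$6$ new space, and twisting by $\left(\frac{12}{\cdot}\right)$ annihilates all coefficients at indices divisible by $2$ or $3$. To conclude $\mathcal{S}_r(F) = H$ you must additionally argue that both sides are $2,3$-new with the same Atkin--Lehner eigenvalues, so that their coefficients at $2^u3^v x$ are determined by those at $x$ via the recursion of Lemma \ref{2-3-new}; only then does agreement on indices coprime to $6$ force equality. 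That step is missing from your proposal and is exactly the content the paper supplies (in a different guise) through its trace computation.
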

\noindent
Let $r$ be an odd integer with $0 < r < 8$. Let $k = 3(r - 1)/2 + s$. For all $1 \leq i \leq d$, we let 
    \[
     G_{i}(z) = g_{i}(2z) \: \Theta (g_{i}(z)) - g_{i}(z) \: \Theta (g_{i}(2z)) \in S_{2k + 2}(2r).
    \]
\begin{theorem} \label{image_(3|r)} Assume the notation above. 
\begin{enumerate}
        \item   For all $1 \leq i \leq d$, we have $G_{i}(z) \in S_{2k + 2}^{\new  2}\left(2r, - \left( \frac{8}{r}   \right) \right)$.
        
   \item $\mathcal{S}_{r}(\eta^{3r}(z) f(z)) = \displaystyle{\sum_{i = 1}^{d}} \alpha_{i} G_{i} \in  S_{2k + 2}^{\new} \left(2, - \left( \frac{8}{r} \right) \right)$.
\end{enumerate}
\end{theorem}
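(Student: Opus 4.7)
The plan is to mirror the proof of Theorem \ref{image_(r,6) = 1}, with the eta-quotient $\eta^{r}(z)/\eta(rz)$ replaced by $\eta^{3r}(z)/\eta^{3}(rz)$ and Theorem \ref{first_image}(1) replaced by Theorem \ref{first_image}(2). First, I would verify that for $r \in \{3,5,7\}$ the eta-quotient $\eta^{3r}(z)/\eta^{3}(rz)$ lies in $M_{3(r-1)/2}(r,(\cdot/r))$: this follows by cubing the classical fact $\eta^{r}(z)/\eta(rz) \in M_{(r-1)/2}(r,(\cdot/r))$, using that $(\cdot/r)^{3}=(\cdot/r)$ has order two; the case $r=1$ is trivial and reduces directly to Theorem \ref{first_image}(2). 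Consequently
\[
h(z) := \frac{\eta^{3r}(z)}{\eta^{3}(rz)} f(z) \in M_{k}(r,(\cdot/r)),
\]
and expanding $h|_{k}U_{r}$ in the Hecke eigenbasis $\{g_{1},\ldots,g_{d}\}$ of $M_{k}(r,(\cdot/r))$ produces scalars $\alpha_{i}$ with $h|_{k}U_{r} = \sum_{i}\alpha_{i}g_{i}$.

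Next, I would invoke the key compatibility---the $3\mid r$ analogue of the central lemma already used for Theorem \ref{image_(r,6) = 1}---relating $\mathcal{S}_{r}$ on the product $\eta^{3r}(z)f(z) = \eta^{3}(rz)\cdot h(z)$ to $\mathcal{S}_{1}$ applied to $\eta^{3}(z)\cdot (h|_{k}U_{r})(z)$, giving
\[
\mathcal{S}_{r}(\eta^{3r}(z)f(z)) = \sum_{i=1}^{d}\alpha_{i}\,\mathcal{S}_{1}(\eta^{3}(z)g_{i}(z)).
\]
Theorem \ref{first_image}(2) then identifies $\mathcal{S}_{1}(\eta^{3}(z)g_{i}(z))=G_{i}(z)$ and places each $G_{i}$ in $S_{2k+2}^{\new 2}(2r,\chi^{2},\epsilon_{2,3,\chi})$ with $\chi=(\cdot/r)$. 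Here $\chi^{2}$ is trivial and
\[
\epsilon_{2,3,\chi} = -\chi(2)\left(\tfrac{8}{1}\right) = -\left(\tfrac{2}{r}\right) = -\left(\tfrac{8}{r}\right),
\]
so each $G_{i}\in S_{2k+2}^{\new 2}(2r,-(8/r))$, which proves (1). For (2), the sum $\sum_{i}\alpha_{i}G_{i}$ a priori lies in this space at level $2r$; to descend to level $2$ I would appeal to Yang's Theorem \ref{Yang}(2), which forces the image of $\mathcal{S}_{r}$ on $\mathcal{A}_{3r,s,1,1_{N}}$ to sit inside a level-$2$ newform space (once the character twist dictated by translating between the theta- and eta-multiplier conventions is accounted for).

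The principal obstacle will be the second step: justifying the reduction identity $\mathcal{S}_{r}(\eta^{3r}f) = \mathcal{S}_{1}(\eta^{3}\cdot (h|_{k}U_{r}))$. The analogous identity in the $(r,6)=1$ regime is presumably the central technical input underlying Theorem \ref{image_(r,6) = 1} and is established earlier in the paper, but its $3\mid r$ counterpart requires a parallel but separate analysis of the Ahlgren--Andersen--Dicks lift---in particular, checking that the factor $\eta^{3}(rz)$ interacts correctly with the eta-multiplier and that $U_{r}$ intertwines with $\mathcal{S}_{r}$ in the expected way. All remaining pieces---expanding in the Hecke eigenbasis, matching the Atkin--Lehner eigenvalues via $(8/r)=(2/r)$, and the final level descent using Yang---are routine given this identity.
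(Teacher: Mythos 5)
Your treatment of part (1) and of the identity $\mathcal{S}_{r}(\eta^{3r}(z)f(z)) = \sum_{i}\alpha_{i}G_{i}$ coincides with the paper's. The ``principal obstacle'' you flag --- the reduction $\mathcal{S}_{r}(\eta^{3r}(z)f(z)) = \mathcal{S}_{1}\big(\eta^{3}(z)\cdot(h|_{k}U_{r})(z)\big)$ --- is not actually an obstacle here: the paper has already recorded the $(r,6)=3$ commutation $\mathcal{S}_{t}(f)=\mathcal{S}_{1}(f\mid U_{t})$ as part (d) of Lemma \ref{Comm_r/3}, and the factorization $(\eta^{3}(rz)h(z))\mid U_{r} = \eta^{3}(z)\cdot (h\mid U_{r})(z)$ is Proposition \ref{U_factor}, valid for the fractional $q$-expansions in play because $(r,8)=1$. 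Your identification of the Atkin--Lehner sign $\epsilon_{2,3,(\cdot/r)}=-\left(\tfrac{8}{r}\right)$ via Theorem \ref{first_image}(2) is exactly the paper's argument for part (1).

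Where you genuinely diverge is the level descent in part (2). The paper does not invoke Yang: its stated purpose is to give a constructive, independent proof of Yang's theorem, and it descends from level $2r$ to level $2$ by the same explicit trace computation as in Theorem \ref{image_(r,6) = 1} --- relating the coefficients $\alpha_{i}$ of $h|_{k}U_{r}$ to the coefficients $\beta_{i}$ of the Fricke-transformed quotient via $H_{r}$, evaluating the resulting Gauss sums, proving the analogue of Proposition \ref{comm_Sr_Wr}, and concluding $\mathrm{Tr}_{2}^{2r}\big(\sum_{i}\alpha_{i}G_{i}\big)=(r+1)\sum_{i}\alpha_{i}G_{i}$. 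Your alternative of citing Theorem \ref{Yang}(2) is not logically circular (Yang's proof is independent), but it defeats the purpose of the paper, and it also leaves a real step unjustified: Yang together with the comparison $\textup{Sh}_{r}(F(8z))=\mathcal{S}_{r}(F)\otimes\left(\tfrac{-4}{\cdot}\right)$ only gives $\mathcal{S}_{r}(\eta^{3r}f)\otimes\left(\tfrac{-4}{\cdot}\right)=H\otimes\left(\tfrac{-4}{\cdot}\right)$ for some $H$ of level $2$, i.e.\ agreement of coefficients at odd indices only; to conclude $\sum_{i}\alpha_{i}G_{i}=H$ you must then use $2$-newness of both sides (as in Lemma \ref{2-3-new}) to propagate the equality to even indices. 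If the theorem is to stand as part of a reproof of Yang's result, you need to carry out the trace computation instead.
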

{\bf{Remarks:}}
\begin{enumerate}
        \item Using Proposition 3.2 of \cite{martin}, one can verify that for $(N, 6) = 1$, we have 
        \[
        \dim \: S_{k + 1/2}(N, \chi \nu_{\eta}^r) = \dim \: S_{2k}^{\new} \left(6, \chi^2, -\left( \frac{8}{r} \right), - \left( \frac{12}{r} \right) \right),
        \]
        and for $N$ odd, we have 
        \[
        \dim \: S_{k + 3/2}(N, \chi \nu_{\eta}^{3r}) = \dim \: S_{2k + 2}^{\new} \left(2, \chi^2, -\left( \frac{8}{r} \right) \right).
        \]
\item Let $\chi$ be a real and primitive character and $(r, N) = 1$. Let $f(z) \in M_{s}(N, \chi)$.
\begin{enumerate}
\item Theorem \ref{image_(r,6) = 1} and Theorem \ref{image_(3|r)} hold true.
\item Using Lemma 6 of \cite{winnie-li}, we deduce that for $k = (r - 1)/2 + s$ and $(N, 6) = 1$,
\[
S_{2k}^{\new} \left(6N, \epsilon_{2, r, \chi}, \epsilon_{3, r, \chi} \right) \subseteq S_{2k}^{\new} \left(6rN, \epsilon_{2, r, \chi}, \epsilon_{3, r, \chi} \right),
\]
and for for $k = 3(r - 1)/2 + s$ and $N$ odd, 
\[
S_{2k + 2}^{\new} \left(2N, \epsilon_{2, r, \chi} \right) \subseteq S_{2k + 2}^{\new} \left(2rN, \epsilon_{2, r, \chi} \right).
\]
\end{enumerate}
\end{enumerate}
\medskip

The following result is a generalized version of Theorem \ref{first_Shimura_image} in the Rankin-Cohen bracket setting.
\begin{theorem} \label{rankin_bracket}
Let $g(z) \in M_{k}(N, \chi)$ be a normalized Hecke eigenform for all $T_p$. Let $w \geq 0$ be an integer. We define
\[
\alpha_{k, w} = {k + w - 1 \choose w} {k + 2w - 1 \choose 2w}^{-1} \quad \text{and} \quad \beta_{k, w} = {k + w - 1 \choose w} {k + 2w \choose 2w + 1}^{-1}.
\]
Then we have
\begin{enumerate} 
\item The Shimura image $\Sh_{1}$ for the Rankin-Cohen bracket of a normalized Hecke eigenform and a weight 1/2 eta-quotient with the eta-multiplier as specified in Table \ref{tab:eta_even} is given by
\medskip
\begin{enumerate} [label=(\alph*)]
\item $\mathcal{S}_{1}\left( [g(z), \eta(z)]_{w} \right) = 24^{-w} \: \alpha_{k, w} \: \left( [g(6z), g(z)]_{2w} - [g(3z), g(2z)]_{2w}  \right)$. \\
\item $\Sh_{1} \left( \left[g(z), \frac{\eta(z) \eta(4z)}{\eta(2z)}  \right]_{w} \right) = 8^{-w} \: \alpha_{k, w} \: \left(2 \: [\tilde{g}(2z), g(z)]_{2w} - [g(2z), g(z)]_{2w}\right) \otimes \left( \frac{-8}{\cdot} \right).$ 
\item $\Sh_{1} \left( \left[g(z), \frac{\eta(2z)^{2}}{\eta(z)}\right]_{w} \right) = 8^{-w} \: \alpha_{k, w} \: \left([g(2z), g(z)]_{2w} \right)\otimes \left( \frac{-4}{\cdot} \right)$
\item $\Sh_{1} \left( \left[ g(z), \frac{\eta(2z) \eta(3z)^{2}}{\eta(z) \eta(6z)} \right]_{w} \right) = 24^{-w} \: \alpha_{k, w} \: ([g(6z), g(z)]_{2w} + [g(3z), g(2z)]_{2w}) \otimes \left( \frac{12}{\cdot} \right).$
\item $\Sh_{1} \left( \left[ \: g(z), \frac{\eta^{3}(2z)}{\eta(z) \eta(4z)} \right]_{w} \right)$ = 
\[
 24^{-w} \alpha_{k, w} \left( 2 \: [\tilde{g}(6z), g(z)]_{2w} - [g(6z), g(z)]_{2w} \right) \otimes \left( \mfrac{8}{\cdot} \right) - \left(\left[g(3z) \otimes \left( \mfrac{-4}{\cdot} \right), g(2z) \right]_{2w} \right) \otimes \left( \frac{-8}{\cdot} \right).
\]
\item $\Sh_{1} \left( \left[ \: g(z), \frac{\eta(z) \eta(4z) \eta^{5}(6z)}{\eta^{2}(2z) \eta^{2}(3z) \eta^{2}(12z)} \right]_{w} \right)$ = 
\[
 24^{-w} \: \alpha_{k, w} \: \left( 2 \: [\tilde{g}(6z), g(z)]_{2w} - [g(6z), g(z)]_{2w} \right) \otimes \left( \mfrac{24}{\cdot} \right) + \left(\left[g(3z) \otimes \left( \mfrac{-4}{\cdot} \right), g(2z) \right]_{2w} \right) \otimes \left( \frac{-24}{\cdot} \right).
\]
\end{enumerate}
\item The Shimura image $\Sh_{1}$ for the Rankin-Cohen bracket of a normalized Hecke eigenform and a weight 3/2 eta-quotient with the eta-multiplier as specified in Table \ref{tab:eta_odd} is given by
\medskip
\begin{enumerate} [label=(\alph*)]
\item $\mathcal{S}_{1}\left( [g(z), \eta(z)^{3}]_{w} \right) = 8^{-w} \: \beta_{k, w} \: \left( [g(2z), g(z)]_{2w + 1} \right)$.
\item $\Sh_{1} \left( \left[g(z), \frac{\eta(2z)^9}{\eta(z)^{3} \eta(4z)^{3}} \right]_{w} \right) = 
8^{-w} \: \beta_{k, w} \: \left( 2 \: [\tilde{g}(2z), g(z)]_{2w + 1}  - [g(2z), g(z)]_{2w + 1} \right) \otimes \left( \frac{8}{\cdot} \right)$.
\item $\Sh_{1} \left( \left[ g(z), \frac{\eta(z)^{5}} {\eta(2z)^{2}}\right]_{w} \right) = 24^{-w} \: \beta_{k, w} \: \left( [g(6z), g(z)]_{2w + 1}  + [g(3z), g(2z)]_{2w + 1} \right) \otimes \left( \frac{-4}{\cdot} \right)$. 
\item $\Sh_{1} \left( \left[ g(z), \frac{\eta(2z)^{13}}{\eta(z)^{5} \eta(4z)^{5}} \right]_{w} \right) = $
\[
24^{-w} \: \beta_{k, w} \: \left( 2 \: [\tilde{g}(6z), g(z)]_{2w + 1}  - [g(6z), g(z)]_{2w + 1} \right) \otimes \left( \frac{-8}{\cdot} \right) + \left( \left[ g(3z) \otimes \left( \mfrac{-4}{\cdot} \right), g(2z)  \right]_{2w + 1} \right) \otimes \left( \frac{8}{\cdot} \right).
\]
\end{enumerate}
\end{enumerate}
\end{theorem}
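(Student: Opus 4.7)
The plan is to bootstrap from Theorem \ref{first_Shimura_image}, which is exactly the $w = 0$ case of Theorem \ref{rankin_bracket}: since $\alpha_{k, 0} = 1$, $\beta_{k, 0} = 1/k$, $[F, G]_{0} = FG$, and $[F, G]_{1} = k(F\,\Theta G - \Theta F \cdot G)$ when $F$ and $G$ share weight $k$, the Rankin--Cohen formulas collapse exactly onto the product formulas of Theorem \ref{first_Shimura_image} when $w = 0$. The extension to arbitrary $w \geq 0$ should follow from analyzing how $\mathcal{S}_{1}$ interacts with iterated applications of the theta operator $\Theta$, which acts diagonally on Fourier expansions as $q^{m} \mapsto m\,q^{m}$.

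The main tool I would use is the Cohen--Kuznetsov lift: for a modular form $f$ of weight $\ell$, set
\begin{equation*}
\widetilde{f}(z, X) := \sum_{n \geq 0} \frac{\Theta^{n} f(z)}{n!\,(\ell)_{n}}\, X^{n},
\end{equation*}
so that the Rankin--Cohen brackets $[f, g]_{w}$ appear, up to an explicit Pochhammer normalization, as the $X^{w}$-coefficients of the formal product $\widetilde{f}\cdot\widetilde{g}$. Applied on the half-integral side with $h$ one of the eta-quotient theta series of Table \ref{tab:eta_even} or Table \ref{tab:eta_odd}, this expresses $[g(z), h(z)]_{w}$ as a weighted combination of $(\Theta^{r} g)(\Theta^{w-r} h)$. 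The analogous expansion holds for $[g(az), g(bz)]_{2w}$ and $[g(az), g(bz)]_{2w+1}$ on the integral side. Since $\mathcal{S}_{1}$ is linear and depends only on Fourier coefficients at square indices (up to twist), and since $\Theta^{n}$ is diagonalized in the Fourier basis, one transports each summand $(\Theta^{r} g)(\Theta^{w-r} h)$ through $\mathcal{S}_{1}$ by invoking Theorem \ref{first_Shimura_image} coefficient-by-coefficient after the differentiations. The prefactors $24^{-w}$ and $8^{-w}$ arise naturally from the shifts $m \mapsto m/24$ or $m/8$ induced by the $q^{1/24}$-factor of $\eta(z)$ (respectively the $q^{3/24} = q^{1/8}$-factor in the weight-$3/2$ eta-quotients built from $\eta^{3}(z)$).

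The main obstacle is the combinatorial identity that rewrites a half-integral Cohen--Kuznetsov expansion, in which one factor has weight $1/2$ or $3/2$, as an integral one in which both factors have weight $k$. This resummation is precisely what produces the ratios of binomial coefficients $\alpha_{k,w}$ and $\beta_{k,w}$, and reduces in each case to a finite binomial identity of Vandermonde--Chu type. Concretely, one must match
\begin{equation*}
\sum_{r=0}^{w} (-1)^{r} \binom{w+k-1}{w-r}\binom{w - \tfrac{1}{2}}{r} m_{1}^{r} m_{2}^{w-r}
\end{equation*}
to an analogous sum of length $2w+1$ with two copies of $\binom{2w+k-1}{\cdot}$, and similarly with $\binom{w + \tfrac{1}{2}}{r}$ and $\beta_{k,w}$ in the weight-$3/2$ case. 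The bookkeeping of character twists $\bigl(\tfrac{-8}{\cdot}\bigr)$, $\bigl(\tfrac{12}{\cdot}\bigr)$, $\bigl(\tfrac{24}{\cdot}\bigr)$, and of the $\widetilde{g} = g\mid U_{2} V_{2}$ substitutions transfers mechanically from Theorem \ref{first_Shimura_image} and contributes no new arithmetic content, so once the binomial identity is established in each of the ten subcases the theorem follows.
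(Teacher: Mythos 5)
Your proposal is correct and follows essentially the same route as the paper, which establishes this theorem by adapting Xue's Selberg-identity argument \cite{huixue} to the base-case formulas of Theorem \ref{first_Shimura_image}: expand the brackets through iterated $\Theta$-derivatives, push the lift through coefficient-by-coefficient using the support of the theta series on square indices, and resolve the resulting Vandermonde--Chu-type binomial identity that produces $\alpha_{k,w}$ and $\beta_{k,w}$. The only imprecision worth noting is that the prefactor $24^{-w}$ versus $8^{-w}$ is governed by the denominator $D$ of the particular eta-quotient in Table \ref{tab:eta_even} or Table \ref{tab:eta_odd}, not by whether its weight is $1/2$ or $3/2$ (for instance $\eta(z)\eta(4z)/\eta(2z)$ has weight $1/2$ but $D=8$, while $\eta^{5}(z)/\eta^{2}(2z)$ has weight $3/2$ but $D=24$).
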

We now outline the plan for the rest of the paper. In Section 2, we provide explicit numerical examples to illustrate Theorem \ref{image_(r,6) = 1}, and Theorem \ref{image_(3|r)}. In Section 3, we provide necessary background on modular forms and Shimura lifts associated to the theta multiplier and the eta-multiplier. In Section 4, we prove our main theorems. 

\medskip

\begin{center}
    \footnotesize{ACKNOWLEDGEMENTS}
\end{center}
\par
We thank Scott Ahlgren for his helpful comments and discussions.

\section{Examples}
\medskip
{\bf{Example 1:}}
We compute $\mathcal{S}_{5}(\eta^{5}(z) E_{4}(z))$. Here, $r = 5$, $f = E_{4}$, and $s = 4$.  We require the following forms in $M_6 \left(5 , \left(\frac{\cdot}{5} \right)\right)$. 
\[
f_0(z) = \frac{\eta(z)^{15}}{\eta(5z)^3}, \ \ 
f_1(z) = \eta(z)^9\eta(5z)^3, \ \ 
f_2(z) = \eta(z)^3\eta(5z)^9, \ \ 
f_3(z) = \frac{\eta(5z)^{15}}{\eta(z)^3}.  
\]
We observe that 
\begin{align*}
&g_1(z) = -\frac{67}{5}f_0(z) - 5^2\cdot 8 f_1(z) - 5^4f_2(z), \\
&g_2(z) = f_1(z) + (9 - 2\sqrt{-11})f_2(z), \\
&g_3(z) = f_1(z) + (9 + 2\sqrt{-11})f_2(z), \\
&g_4(z) = f_1(z) + 5\cdot 8 f_2(z) + 5\cdot 67 f_3(z)
\end{align*}
are normalized Hecke eigenforms in this space for all $T_p$. We further require
\[
\alpha = -\frac{3 \cdot 5^4}{67}\left(103 + \frac{3 \cdot 283 \sqrt{-11}}{11}\right).  
\]
\noindent
We compute
\[
\left(\frac{\eta(z)^5}{\eta(5z)}E_4(z)\right) \mathrel{\bigg|_{6}} U_5
= -\frac{5}{67}g_1(z) + \overline{\alpha} g_2(z) + \alpha g_3(z).  
\]
For each $i$, $1 \leq i \leq 4$, we set 
\[
G_i(z) = g_i(z)g_i(6z) - g_i(2z)g_i(3z) \in M_{12}(\Gamma_0(30)).  
\]
Then the theorem gives
\begin{align*}
&\mathcal{S}_5(\eta(z)^5 E_4(z)) = \mathcal{S}_{1} ((\eta(z)^{5} E_{4}(z)) \mid_{13/2} U_{5}) 
= \mathcal{S}_{1} \left( \eta(z) \left( -\frac{5}{67}g_1(z) + \overline{\alpha} g_2(z) + \alpha g_3(z) \right)\right),  \\
&= \frac{-5}{67} G_{1}(z) + \overline{\alpha} G_{2}(z) + \alpha G_{3}(z) = q - 32 q^{2} - 243 q^{3} + 1024 q^{4} + 5766 q^{5} + \mathcal{O}(q^6).
\end{align*}
Lastly,
$\text{Tr}_{6}^{30}(\mathcal{S}_5(\eta(z)^5 E_4(z))) = [\Gamma_0(6):\Gamma_0(30)] G(z) = 6G(z) \in M_{12}(6)$. Using LMFDB, we find that $\mathcal{S}_5(\eta(z)^5 E_4(z))$ coincides with the newform 6.12.a.a. 

\medskip

{\bf{Example 2:}} We compute $\mathcal{S}_{3}(\eta^{9}(z) E_{6}(z))$. Here, $r = 9$, $f = E_{6}$, and $s = 6$.  We require the following forms in $M_9 \left(3 , \left(\frac{-3}{\cdot} \right)\right)$. 
\[
f_0(z) = \frac{\eta(z)^{27}}{\eta(3z)^9}, \ \ 
f_1(z) = \eta(z)^{3} \eta(3z)^{15}, \ \ 
f_2(z) = \eta(z)^{15} \eta(3z)^{3}, \ \ 
f_3(z) = \frac{\eta(3z)^{27}}{\eta(z)^9}.  
\]
We observe that 
\begin{align*}
&g_1(z) = 270 \:f_1(z) + f_2(z) + 7281 \: f_3(z), \\
&g_2(z) = (809/27) \: f_0(z) + 2187 \: f_1(z) + 810 \: f_{2}(z), \\
&g_3(z) = (15 - 6 \sqrt{-14}) \: f_1(z) + f_2(z), \\
&g_4(z) = (15 + 6 \sqrt{-14}) \: f_1(z) + f_2(z)
\end{align*}
are normalized Hecke eigenforms in this space for all $T_p$.

Let $\alpha_{2} = \frac{27}{809}, \: \alpha_{3}$ = $\frac{5296914}{809} - \frac{6348861 \sqrt{-14}}{809}$ and $\alpha_{4}$ = $\overline{\alpha_{3}}$.
We compute
\[
\left(\frac{\eta(z)^9}{\eta^{3}(3z)}E_6(z)\right) \mathrel{\bigg|_{9}} U_3
= \alpha_{2} g_2(z) + \alpha_{3} g_3(z) + \alpha_{4} g_4(z).  
\]
For each $i$, $1 \leq i \leq 4$, we set 
\[
G_i(z) = g_i(2z) \: \Theta(g_i(z)) - g_i(z) \: \Theta(g_i(2z)) \in M_{20}(6).  
\]
Then the theorem gives
\begin{align*}
&\mathcal{S}_3(\eta^{9}(z) E_6(z)) = \mathcal{S}_{1} ((\eta(z)^{9} E_{6}(z)) \mid_{21/2} U_{3}) 
= \mathcal{S}_{1} \left( \eta^{3}(z) \left( \alpha_{2} g_2(z) + \alpha_{3} g_3(z) + \alpha_{4} g_4(z) \right)\right),  \\
&= \alpha_{2} G_{2}(z) + \alpha_{3} G_{3}(z) + \alpha_{4} G_{4}(z) = q - 512 q^{2} - 13092 q^{3} + 262144 q^4 + 6546750 q^5 + \mathcal{O}(q^6).
\end{align*}
$\text{Tr}_{2}^{6}(\mathcal{S}_{3}(\eta(z)^9 E_6(z))) = [\Gamma_0(2) : \Gamma_0(6)] G(z) = 4 G(z) \in M_{20}(2)$. Using LMFDB, we find that $\mathcal{S}_3(\eta(z)^9 E_6(z))$ coincides with the newform 2.20.a.a. 
\section{Background} \label{background}
For background on classical modular forms, one may consult \cite{Cohen2017ModularFA}. For details on modular forms with theta-multiplier and eta-multiplier, we refer the reader to \S 3 of \cite{AAD} and \S 2 of \cite{yang2}, repectively.
\subsection{Operators on Modular Forms}
We define some important operators acting on spaces of modular forms that play a central role in our work.  Throughout this section, we let $k$ to be an integer unless otherwise specified. Let $N\geq 1$ be an integer, and we let $\chi$ be a Dirichlet character modulo $N$.  For  functions $f$ on the upper half-plane $\mathbb{H}$ and for all $\gamma = \pMatrix{a}{b}{c}{d} \in \text{GL}_2^{+}(\mathbb{R})$, we define the slash operator in weight $k$ by 
\begin{equation} \label{SLO}
\left(f\mid _k\gamma\right)(z) = \left(\text{det}\gamma\right)^{\frac{k}{2}}(cz+d)^{-k}f(\gamma z).
\end{equation}
With $q = e^{2\pi iz}$, we let $f = \displaystyle{\sum_{n \geq 0}} a(n)q^n \in M_k(N,\chi)$.
For all integers $m\geq 1$, we define the $U$- and $V$-operators by 
\begin{align}
& f\mid_k V_m = m^{-\frac{k}{2}}f\mid_k\pMatrix{m}{0}{0}{1}, \label{VOP} \\
& f\mid_k U_m = m^{\frac{k}{2} - 1}\sum_{j = 0}^{m - 1}f\mid_k \pMatrix{1}{j}{0}{m}. \label{UOP}
\end{align}
On $q$-expansions, we have 
\begin{equation} \label{UVq}
f\mid_k V_m = \sum_n a(n)q^{mn}, \ \ \ 
f\mid_k U_m = \sum_n a(mn)q^n.  
\end{equation}
These operators map spaces of modular forms as follows: 
\begin{align}
& V_m : M_k(N,\chi) \longrightarrow M_k(mN, \chi), \label{vmap} \\
& U_m : M_k(N,\chi)\longrightarrow \begin{cases} M_k(mN, \chi), & m\nmid N, \\ M_k(N, \chi), & m\mid N.\notag
\end{cases}
\end{align}

We will use the following elementary property of the $U$-operator on $q$-series.
\begin{proposition}\label{U_factor}
Let $m\geq 1$ in $\mathbb{Z}$, and let $f(q)$, $g(q)\in \mathbb{C}[[q]]$.  Then we have 
\[
(f(q)g(q^m))\mid U_{m} = f(q)\mid U_{m} \cdot g(q).
\]  
\end{proposition}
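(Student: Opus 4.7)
The proof will be a purely formal $q$-series manipulation using the combinatorial description of $U_m$ recorded in \eqref{UVq}. The plan is as follows. First I will expand $f(q)=\sum_{n\geq 0}a(n)q^n$ and $g(q)=\sum_{n\geq 0}b(n)q^n$, so that
\[
f(q)\,g(q^m) \;=\; \sum_{N\geq 0}\Big(\sum_{j+mk=N}a(j)b(k)\Big)q^N.
\]
Next I apply $U_m$, which by \eqref{UVq} extracts the coefficients of $q^{mn}$. In the inner convolution, the equation $j+mk=mn$ forces $m\mid j$; writing $j=m\ell$ converts the constraint simply to $\ell+k=n$. Therefore
\[
(f(q)\,g(q^m))\mid U_m \;=\; \sum_{n\geq 0}\Big(\sum_{\ell+k=n}a(m\ell)\,b(k)\Big)q^n,
\]
which is visibly the Cauchy product of $\sum_{\ell}a(m\ell)q^\ell = f\mid U_m$ with $\sum_k b(k)q^k = g(q)$. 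Hence $(f(q)g(q^m))\mid U_m = (f\mid U_m)\cdot g$, as claimed.

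There is essentially no obstacle here: the statement is a formal identity about power series and the entire argument amounts to interchanging two summations and reindexing by $j\mapsto m\ell$. The only subtlety worth flagging is that $U_m$ must be interpreted as the $q$-expansion operator of \eqref{UVq} rather than as the weight-$k$ slash operator of \eqref{UOP}, which carries the normalizing factor $m^{k/2-1}$; this distinction is harmless since the proposition is phrased for elements of $\mathbb{C}[[q]]$ without reference to any weight, and subsequent applications of the identity in the paper will be on $q$-expansions of genuine modular forms where both definitions agree.
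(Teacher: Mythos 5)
Your proof is correct and complete: the paper states Proposition \ref{U_factor} without proof, treating it as an elementary property of $q$-series, and your coefficient computation (Cauchy product, then observing that $j+mk=mn$ forces $m\mid j$ and reindexing $j=m\ell$) is exactly the standard argument that justifies it. Your closing remark about interpreting $U_m$ via \eqref{UVq} rather than the normalized slash definition \eqref{UOP} is apt and consistent with how the paper applies the proposition.
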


\medskip
\noindent
We now introduce Hecke operators. For $f(z) = \displaystyle{\sum_{n \geq 0}} a(n) q^{n} \in M_k(N,\chi)$ and a prime $p$, we define the action of Hecke operator $T_p$ on $f(z)$~by
\begin{equation}\label{defn_Hecke}
f \mid_k T_{p} = f \mid_k U_p + \chi(p) p^{k - 1} f \mid_k V_p = \sum\left(a(p n) + \chi(p){p}^{k - 1}a\left(\frac{n}{p}\right)\right)q^n,
\end{equation}
where $a\left(\frac{n}{p}\right) = 0$ when $p\nmid n$. In general, for a positive integer $n$, we define the action of $T_{n}$ on $f(z)$~by
\begin{equation}\label{defn_Tn}
f \mid_k T_{n} = \sum_{n \geq 0} \left( \sum_{d \mid (m, n)} \chi(d) d^{k - 1} a(mn/d^2) \right) q^{n}.
\end{equation}
The Hecke operators preserve both $M_k(N,\chi)$ and its subspace of Eisenstein and cusp forms. 

\medskip

We call $f \in M_{k}(N,\chi)$ to be a Hecke eigenform if for every integer $n \geq 1$ with $(n, N) = 1$, there exists $\lambda_n \in \C$ with $f \mid_k T_n = \lambda_n f$. Further, $f$ is said to be normalized if $a(1) = 1$.
A newform in $S_{k}^{new}(N, \chi)$ is a normalized cusp form that is an eigenform of all the Hecke operators and all of the Atkin-Lehner operators $W_{p}^{N}$ for primes $p \mid N$ and $H_{N}$. 

\medskip

The Fourier coefficients of normalized Hecke eigenforms satisfy the following multiplicative relations. 
\begin{proposition}
    Let $\displaystyle{f(z) = \sum_{n} a(n)q^{n}} \in M_{k}(N, \chi)$ be a normalized Hecke eigenform. Then for all $m, n \geq 0$, we have
    \begin{equation}\label{multiplicativity}
    a(m)a(n) = \sum_{d \mid (m , n)} \chi(d) d^{k - 1} a(mn/d^{2}).
    \end{equation}
\end{proposition}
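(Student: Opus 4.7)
The plan is to derive \eqref{multiplicativity} directly by comparing Fourier coefficients on both sides of the Hecke eigenvalue equation for $T_n$, using only the formula \eqref{defn_Tn}.

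First I would identify the eigenvalue $\lambda_n$ with the Fourier coefficient $a(n)$. Writing $f \mid_k T_n = \lambda_n f$ and reading off the first Fourier coefficient via \eqref{defn_Tn}, the left-hand side contributes $\sum_{d \mid (1,n)} \chi(d) d^{k-1} a(n/d^2) = a(n)$, while the right-hand side contributes $\lambda_n a(1) = \lambda_n$ since $f$ is normalized. Hence $\lambda_n = a(n)$, and the eigenvalue equation becomes $f \mid_k T_n = a(n) f$.

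Second, I would extract the $m$-th Fourier coefficient of this identity. On the right, $a(n) f$ has $m$-th coefficient equal to $a(n) a(m)$. On the left, \eqref{defn_Tn} (applied with $m$ in the role of the index) gives the $m$-th coefficient
\[
\sum_{d \mid (m,n)} \chi(d)\, d^{k-1}\, a\!\left(\frac{mn}{d^{2}}\right).
\]
Equating the two expressions yields \eqref{multiplicativity} at once.

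The argument contains no real obstacle: it is a two-line comparison once $\lambda_n = a(n)$ has been established. The only minor technicality is that the stated definition only guarantees $T_n$-eigenform behavior for $(n,N)=1$, so for full generality one runs the argument in that range, noting that the convention $\chi(d)=0$ when $\gcd(d,N)>1$ makes \eqref{multiplicativity} consistent and extends it to the indices of interest in the remainder of the paper (where it is invoked in conjunction with Fourier coefficients of newforms, for which the eigenform property holds for all $T_n$).
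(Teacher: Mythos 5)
Your proof is correct and is the standard argument; the paper states this proposition without proof, as it is the classical multiplicativity relation for Hecke eigenforms, and your two-step comparison of Fourier coefficients (first extracting $\lambda_n = a(n)$ from the coefficient of $q$, then reading off the $m$-th coefficient of $f\mid_k T_n = a(n)f$) is exactly how one would supply it. Your closing caveat is also well taken: as literally stated the identity for \emph{all} $m,n$ requires eigenform behavior for all $T_n$, not just those with $(n,N)=1$, which is the hypothesis in force wherever the paper invokes the relation.
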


It is well known \cite{winnie-li} that every cusp form can be uniquely expressed as a linear combination of newforms acted by $V$-operator.

\begin{proposition}\label{decompose_newform}
Let $f \in S_{k}(N, \chi)$ be a non-zero simultaneous Hecke eigenform for all Hecke operators $T_{n}$ with eigenvalues $a(n)$ for all $(n, N) = 1$. Then there uniquely exist a divisor M of N with $Cond(\chi) \mid M \mid N$ and a newform $h \in S_{k}^{\new}(M, \chi)$ having the same eigenvalues as $f$ for all $T_{n}$ with $(n, N) = 1$, and f can be expressed as a linear combination,
\[
f = \sum_{d \mid \frac{N}{M}} \alpha_{d} \: h \mid_k V_{d}.
\]
\end{proposition}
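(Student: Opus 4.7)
My plan is to deduce this from Atkin--Lehner theory via two classical inputs: the newform/oldform direct sum decomposition and strong multiplicity one. The starting point is the decomposition
\[
S_k(N,\chi) \;=\; \bigoplus_{\mathrm{Cond}(\chi)\,\mid\, M \,\mid\, N}\ \bigoplus_{d \,\mid\, N/M} S_k^{\new}(M,\chi) \mid_k V_d,
\]
where the inner sum is direct (the operators $V_d$, $d \mid N/M$, send a basis of newforms for $S_k^{\new}(M,\chi)$ to a linearly independent family). Choosing a basis of newforms in each $S_k^{\new}(M,\chi)$, this gives a basis of $S_k(N,\chi)$ consisting of all $h_i \mid_k V_d$, indexed by newforms $h_i$ of various levels $M_i \mid N$ (with $\mathrm{Cond}(\chi) \mid M_i$) and divisors $d \mid N/M_i$.

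Next I would expand $f$ in this basis. The key point is that $T_n$ commutes with $V_d$ when $(n,N) = 1$, so each basis element $h_i \mid_k V_d$ is itself a $T_n$-eigenform with the same eigenvalue $\lambda_n(h_i)$ as $h_i$. Since $f$ has eigenvalue $a(n)$ under $T_n$ for every $(n,N) = 1$, its expansion can only involve those basis vectors for which $\lambda_n(h_i) = a(n)$ for all $(n,N) = 1$.

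The decisive step is strong multiplicity one: any two newforms in $S_k^{\new}(M_1,\chi)$ and $S_k^{\new}(M_2,\chi)$ (with $M_1,M_2 \mid N$) sharing the eigenvalues $\lambda_n$ for all $(n,N) = 1$ must coincide, and in particular have the same level. This pins down a single newform $h$ of a uniquely determined level $M$ whose translates $h \mid_k V_d$, $d \mid N/M$, are the only basis elements that can contribute to $f$, yielding $f = \sum_{d \mid N/M} \alpha_d\, h \mid_k V_d$ with both $h$ and $M$ unique. The main obstacle is strong multiplicity one itself --- the nontrivial ingredient of Atkin--Lehner theory --- which is why the paper cites \cite{winnie-li} rather than reproving the statement; the rest of the argument is an essentially linear-algebraic unwinding of the decomposition above.
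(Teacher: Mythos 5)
Your argument is correct and is exactly the standard Atkin--Lehner--Li argument (newform/oldform decomposition plus strong multiplicity one) that the paper relies on: the paper gives no independent proof of this proposition, simply citing \cite{winnie-li}, where the result is established along precisely the lines you describe. No gaps to report.
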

In 2019, Linowitz and Thompson \cite{lin-lola} showed that an analogous result holds for the decomposition of the Eisenstein subspace.
\begin{theorem} \label{eisen_decomp}
 Let $k \neq 2$, and $f \in E_k(N, \chi)$. Then there uniquely exist a divisor $M$ of $N$ with $Cond(\chi) \mid M \mid N$ and a newform $h \in E_{k}^{\new}(M, \chi)$, and $f$ can be expressed as a linear combination,
\begin{equation}
f = \sum_{d \mid \frac{N}{M}} \alpha_{d} \: h \mid_k V_{d}.
\end{equation}
\end{theorem}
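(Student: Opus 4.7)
The plan is to transport the Atkin--Lehner--style argument behind Proposition \ref{decompose_newform} into the Eisenstein setting, taking advantage of the explicit parametrization of $E_k(N,\chi)$ by pairs of primitive Dirichlet characters. (I read the hypothesis as the Eisenstein analogue of Proposition \ref{decompose_newform}, i.e.\ $f$ is a simultaneous Hecke eigenform for $\{T_n : (n,N)=1\}$; otherwise $f$ could mix newforms at distinct levels.)

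First, I would invoke the classical structure theorem for $E_k(N,\chi)$: a basis is given by $\{E_{k,\psi,\varphi}\mid_k V_d\}$, where $(\psi,\varphi)$ runs over ordered pairs of primitive Dirichlet characters of conductors $L,R$ with $\psi\varphi=\chi$, $\psi(-1)\varphi(-1)=(-1)^k$ and $LR\mid N$, and $d$ runs over divisors of $N/(LR)$. The hypothesis $k\neq 2$ removes the quasi-modular correction that otherwise afflicts $E_2$. A direct computation on $q$-expansions shows each $E_{k,\psi,\varphi}\mid_k V_d$ is a Hecke eigenform for every $T_p$ with $p\nmid N$, with eigenvalue $\psi(p)+\varphi(p)p^{k-1}$ depending only on the pair $(\psi,\varphi)$ and not on $d$.

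Second, I would declare $E_k^{\new}(M,\chi)$ to be the span of those $E_{k,\psi,\varphi}$ (with $d=1$) satisfying $\textup{Cond}(\psi)\textup{Cond}(\varphi)=M$. This produces the Hecke-equivariant decomposition
\[
E_k(N,\chi) = \bigoplus_{\substack{M\mid N\\ \textup{Cond}(\chi)\mid M}}\ \bigoplus_{d\mid N/M} E_k^{\new}(M,\chi)\mid_k V_d .
\]
For the eigenform $f$, the away-from-$N$ eigenvalues must take the form $\psi(p)+\varphi(p)p^{k-1}$; an elementary Dirichlet-character argument shows that two distinct primitive pairs cannot yield equal eigenvalues at almost all primes, so the unordered pair $\{\psi,\varphi\}$ --- and hence $M$ and the newform $h:=E_{k,\psi,\varphi}$ --- are uniquely determined by $f$. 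Consequently $f$ lies in the joint Hecke eigenspace spanned by $\{h\mid_k V_d : d\mid N/M\}$. These $V$-translates are linearly independent on $q$-expansions, so the expansion $f=\sum_{d\mid N/M}\alpha_d\,h\mid_k V_d$ exists and is unique.

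The principal obstacle is the multiplicity-one / linear-independence step: proving that the parametrizing pair $(\psi,\varphi)$ is rigidly determined by the Hecke eigenvalues away from $N$, and that the $V$-translates of $h$ actually exhaust the corresponding isotypic component rather than merely lying inside it. This is enforced via a dimension comparison with the classical formula for $\dim E_k(N,\chi)$, and constitutes the main content of \cite{lin-lola}; the exclusion of $k=2$ is essential here because the usual $E_2$ must be replaced by differences $E_2(z)-p\,E_2(pz)$, breaking the uniform parametrization by character pairs.
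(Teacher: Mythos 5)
Your proposal is essentially correct, but note that the paper offers no proof of this statement at all: it is quoted as a result of Linowitz and Thompson, with \cite{lin-lola} as the only justification. Your sketch is the standard argument and, as far as one can tell, is the same route taken in that reference: parametrize $E_k(N,\chi)$ by $V_d$-translates of the series $E_{k,\psi,\varphi}$ attached to pairs of primitive characters with $\psi\varphi=\chi$ and $\textup{Cond}(\psi)\textup{Cond}(\varphi)\mid N$, read off the $T_p$-eigenvalue $\psi(p)+\varphi(p)p^{k-1}$ for $p\nmid N$, show this determines the pair (hence $M$ and $h$), and finish with linear independence of $V$-translates plus a dimension count. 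Two remarks. First, your observation that the statement as printed is missing the hypothesis that $f$ be a simultaneous eigenform for all $T_n$ with $(n,N)=1$ is a genuine and correct catch: without it the conclusion is false (a sum of Eisenstein newforms at different levels, or with different character pairs, is a counterexample), and the intended reading is indeed the exact Eisenstein analogue of Proposition \ref{decompose_newform}. Second, the one place where your sketch leans on an unproved assertion is the multiplicity-one step, i.e.\ that distinct unordered pairs $\{\psi,\varphi\}$ give distinct eigenvalue systems; for $k\geq 2$ this is easy (the term $\varphi(p)p^{k-1}$ dominates in absolute value, so the two characters are separated by size and then by linear independence of characters), while for $k=1$ one genuinely needs linear independence of Dirichlet characters on primes in arithmetic progressions. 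You correctly flag this as the main content of the cited work rather than claiming it gratis, so the proposal stands as a faithful outline rather than a complete proof --- which is more than the paper itself provides.
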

\noindent
We require further operators defined using the slash operator. With 
\begin{align}
H_N = \pMatrix{0}{-1}{N}{0}, \label{Fricke}
\end{align}
we define the Fricke involution $f\mapsto f\mid_k H_N$, which maps $M_{k}(N,\chi)
\rightarrow M_{k}(N,\overline{\chi})$.  
Let $p \mid N$ with $(p, N/p) = 1$. The Atkin-Lehner operator $W_{p}^{N}$ on ${M}_{k}(N, \chi)$ is defined by a matrix of the form, 
\begin{equation}\label{defn_W}
W_{p}^{N} := \pMatrix{pa}{b}{Nc}{p} \:\: \text{where} \:\: a, b, c, d \in \Z, 
\end{equation}
with determinant $p$. If $\chi$ is defined modulo $N/p$, then Lemma 2 of \cite{winnie-li} implies that $\mid_k W_{p}^{N}$ maps $M_{k}(N,\chi)$ to itself and does not depend on the choice of $a, b, c$ and $d$. Let $p$ and $N$ be as defined above.
We define the trace operator on $M_k(N,\chi)$ by
\begin{align}
\text{Tr}_{N/p}^{N}(f) &= f + \overline{\chi}(p) \: {p}^{1 - \frac{k}{2}}f\mid W_{p}^{N} \mid_k U_{p} \label{defn_trace1},  \\
\text{Tr}_{N/p}^{N}(f \mid_k H_{N}) &= f \mid_k H_N + \chi(p) \: {p}^{1 - \frac{k}{2}}f \mid H_N \mid_k W_{p}^{N}\mid U_{p}. \label{defn_trace2}
\end{align}
\noindent
We now provide a definition of newforms in terms of a trace operator.
\begin{lemma}[Lemma 6, \cite{winnie-li}]\label{new_criterion}
Let $p$ and $N$ be as defined above and let $f \in S_{k}(N, \chi)$.  We say $f$ is new at the prime $p$, $f \in S_{k}^{\new p}(N, \chi)$ if and only if $\text{Tr}_{N/p}^{N}(f) = \text{Tr}_{N/p}^{N}(f \mid_k H_N) = 0$.  
\end{lemma}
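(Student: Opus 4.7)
My plan is to deduce the lemma from the standard description of $S_k^{\new p}(N,\chi)$ as the Petersson-orthogonal complement of the $p$-oldform subspace
\[
S_k(N/p,\chi) \;+\; V_p\bigl(S_k(N/p,\chi)\bigr) \;\subseteq\; S_k(N,\chi).
\]
Thus $f\in S_k(N,\chi)$ is $p$-new exactly when $\langle f,g\rangle_N = 0$ and $\langle f,\,g\mid_k V_p\rangle_N = 0$ for every $g\in S_k(N/p,\chi)$, and I will translate each of these two orthogonality relations into the vanishing of one of the traces in the statement.

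For the first relation, I would show that $\Tr_{N/p}^{N}$ is, up to the index $[\Gamma_0(N/p):\Gamma_0(N)] = p+1$, the Petersson-adjoint of the inclusion $S_k(N/p,\chi)\hookrightarrow S_k(N,\chi)$. This reduces to choosing coset representatives for $\Gamma_0(N)\backslash\Gamma_0(N/p)$: the identity together with $p$ matrices which, after combining with the nebentypus and the weight-$k$ slash action, coalesce into the single term $\bar\chi(p)\,p^{\,1-k/2}\,W_p^{N}\mid_k U_p$ appearing in \eqref{defn_trace1}. Standard adjointness then gives $\langle f, g\rangle_N = 0$ for every $g\in S_k(N/p,\chi)$ if and only if $\Tr_{N/p}^{N}(f)=0$.

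For the second relation I would invoke the Fricke involution. A direct slash-operator computation using $H_N z = -1/(Nz)$ yields the identity
\[
g\mid_k V_p\mid_k H_N \;=\; p^{-k/2}\,g\mid_k H_{N/p},
\]
and since $\mid_k H_N$ is a unitary map $S_k(N,\chi)\to S_k(N,\bar\chi)$ that preserves the Petersson pairing, I obtain
\[
\langle f,\;g\mid_k V_p\rangle_N \;=\; p^{-k/2}\,\langle f\mid_k H_N,\;g\mid_k H_{N/p}\rangle_N.
\]
As $g$ ranges over $S_k(N/p,\chi)$, the image $g\mid_k H_{N/p}$ sweeps out $S_k(N/p,\bar\chi)$, so by the first step applied with $f$ replaced by $f\mid_k H_N$ and $\chi$ replaced by $\bar\chi$ (which converts the factor $\bar\chi(p)$ to $\chi(p)$), this orthogonality becomes $\Tr_{N/p}^{N}(f\mid_k H_N) = 0$, matching \eqref{defn_trace2} exactly.

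The principal obstacle is the bookkeeping in the two adjoint computations: pinning down the coset representatives for $\Gamma_0(N)\backslash\Gamma_0(N/p)$ so that the $p$ nontrivial terms really bundle into a single $\bar\chi(p)\,p^{\,1-k/2}\,W_p^N U_p$ contribution, and tracking the powers of $p$ together with the swap $\chi\leftrightarrow\bar\chi$ under Fricke so that the scalar coefficients in \eqref{defn_trace1} and \eqref{defn_trace2} emerge exactly as written. With that calculation verified, the lemma follows from the orthogonal-complement description of $p$-newforms.
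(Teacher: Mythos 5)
The paper does not prove this statement at all: it is imported verbatim as Lemma 6 of Li \cite{winnie-li}, so there is no internal proof to compare against. Your sketch is the standard argument for that result and is essentially correct: taking ``$p$-new'' to mean Petersson-orthogonal to $S_k(N/p,\chi)+V_p\bigl(S_k(N/p,\chi)\bigr)$, the first orthogonality is equivalent to $\Tr_{N/p}^{N}(f)=0$ because the trace is (up to the nonzero index $p+1$, which is irrelevant for vanishing) the adjoint of the inclusion, and the second follows from the identity $g\mid_k V_p\mid_k H_N=p^{-k/2}\,g\mid_k H_{N/p}$ (which checks out with the determinant-normalized slash \eqref{SLO}, since the scalar matrix $pI$ acts trivially) together with the fact that $H_N$ is a Petersson isometry and $H_{N/p}$ maps $S_k(N/p,\chi)$ onto $S_k(N/p,\bar\chi)$; the swap $\bar\chi(p)\leftrightarrow\chi(p)$ between \eqref{defn_trace1} and \eqref{defn_trace2} is exactly accounted for by the fact that $f\mid_k H_N$ has nebentypus $\bar\chi$. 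The only caveat is the one you flag yourself: the claim that the $p$ nontrivial coset representatives of $\Gamma_0(N)\backslash\Gamma_0(N/p)$ bundle into the single term $\bar\chi(p)\,p^{1-k/2}\,W_p^N\mid_k U_p$ is the computational heart of the lemma and is asserted rather than carried out, but it is a standard calculation and nothing in your outline would break when it is done.
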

An analogous criterion for $k \neq 2$ and $f \in E_{k}(N, \chi)$ follows from Proposition 19 of Weisinger \cite{weisinger}.

We will also need the commutation of $V_t$ as in \eqref{VOP} and $W_Q^N$ as in \eqref{defn_W} and also $V_t$ and $H_N$ in a special case.
\begin{proposition}
 Let $t, N \geq 1$ be integers with $t \mid 6$ and $(N, 6) = 1$, let $\chi$ be a Dirichlet character modulo $N$, and let $f \in M_k(N,\chi)$. 
 \begin{enumerate}
 \item With $W_{N}^{6N} = \pmatrix{Na}{b}{6Nc}{Nd}$, we have 
\begin{equation} \label{comm_V_W_6N}
f\mid V_t\mid_k W_{N}^{6N} = \chi\left( \mfrac{6c}{t} \right) \: f\mid_k H_{N}\mid V_t.
\end{equation}
 \item With $H_{6N}= \pmatrix{0}{-1}{6N}{0}$, we have 
 \begin{equation} \label{comm_VH}
 f \mid V_t \mid_k H_{6N} = 6^{k/2} \: t^{-k} \: f \mid H_N \mid_k V_{\mfrac{6}{t}}.
 \end{equation}
 \end{enumerate}
\end{proposition}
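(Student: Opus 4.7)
The plan is to verify both identities by direct matrix calculation, reducing each to the slash-invariance of $f$ under $\Gamma_0(N)$. I write each operator as a normalizing constant times the slash action of its underlying matrix: $V_t = t^{-k/2}\cdot\mid_k M_t$ with $M_t = \pmatrix{t}{0}{0}{1}$, and similarly record $W_N^{6N}$, $H_N$, $H_{6N}$. Because the slash operator in weight $k$ is invariant under scaling a matrix by a nonzero real scalar (the determinant factor cancels the $(cz+d)^{-k}$ factor), it suffices to relate the underlying matrices up to a scalar and up to left multiplication by an element of $\Gamma_0(N)$.

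For part (1), I first compute
\[
M_t\cdot W_{N}^{6N}=\pmatrix{Nat}{bt}{6Nc}{Nd},\qquad H_N\cdot M_t=\pmatrix{0}{-1}{Nt}{0}.
\]
I then set $\gamma = (M_t W_N^{6N})(H_N M_t)^{-1}$ and check by direct multiplication that
\[
\gamma=\pmatrix{-bt}{a}{-Nd}{6c/t},
\]
so that $M_t W_N^{6N}=\gamma\cdot H_N M_t$. The hypothesis $t\mid 6$ makes $6c/t\in\Z$, and the defining identity $Nad-6bc=1$ for $W_N^{6N}$ gives $\det\gamma=1$, $-6bc\equiv 1\spmod{N}$, and hence $(6c/t,N)=1$ (using $(N,6)=1$). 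Thus $\gamma\in\Gamma_0(N)$ with bottom-right entry $6c/t$, and applying $\mid_k$ to both sides together with $f\mid_k\gamma = \chi(6c/t)\,f$ yields, after tracking the $t^{-k/2}$ normalization attached to $V_t$ on each side, the claimed identity \eqref{comm_V_W_6N}.

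Part (2) is more direct: the key matrix identity is
\[
M_t\cdot H_{6N}=\pmatrix{0}{-t}{6N}{0}=t\cdot\pmatrix{0}{-1}{6N/t}{0}=t\cdot H_N\cdot M_{6/t},
\]
and by scale-invariance of $\mid_k$ on matrices one obtains $f\mid_k(M_tH_{6N})=f\mid_k(H_N M_{6/t})$. Reintroducing the normalizations $V_t=t^{-k/2}\mid_k M_t$ and $V_{6/t}=(6/t)^{-k/2}\mid_k M_{6/t}$ gives exactly the scalar $t^{-k/2}\cdot(6/t)^{k/2}=6^{k/2}t^{-k}$, which is \eqref{comm_VH}.

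The only nontrivial point is the bookkeeping in part (1): showing that $\gamma$ is actually in $\Gamma_0(N)$ (integrality and coprimality of the lower-right entry to $N$) and that the character evaluation $\chi(6c/t)$ is well-defined. Both rely on the combined use of $t\mid 6$, $(N,6)=1$, and the $W_{N}^{6N}$-determinant relation; once these are checked, both parts are elementary and require no further input beyond the definitions \eqref{SLO}--\eqref{UOP}, \eqref{Fricke}, and \eqref{defn_W}.
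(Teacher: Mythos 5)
Your proof is correct. The paper states this proposition without proof, and your direct matrix verification is exactly the standard argument one would supply: the identity $M_tW_N^{6N}=\gamma\,H_NM_t$ with $\gamma=\pmatrix{-bt}{a}{-Nd}{6c/t}\in\Gamma_0(N)$ checks out (integrality from $t\mid 6$, $\det\gamma=Nad-6bc=1$, and $(6c/t,N)=1$ from $-6bc\equiv 1\spmod N$), the character convention $f\mid_k\gamma=\chi(d_\gamma)f$ matches the paper's, and the scalar bookkeeping $t^{-k/2}(6/t)^{k/2}=6^{k/2}t^{-k}$ in part (2) is right.
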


Let $\displaystyle{f(z) = \sum_n a(n)q^{n} \in {M}_{k} (N, \chi)}$. We also recall the notion of twisting $f$ by a Dirichlet character $\psi$ modulo $M\geq 1$:  
\begin{equation} \label{twist1}
f \otimes \psi = \sum_n \psi(n)a(n)q^n \in M_k(NM^2, \chi\psi^2).  
\end{equation}
\par
Let $p$ be a prime. We require the commutation of $V_{p}$ and $\psi$ as in \eqref{twist1}.
\begin{equation} \label{Comm_V_twist}
\left( f \otimes \psi  \right) \mid V_{p} = \overline{\psi}(p)  \left( f \mid V_{p} \right) \otimes \psi.
\end{equation}
\noindent
We now proceed to the action of Ramanujan's differential operator. The Ramanujan $\Theta$-operator is defined by: 
    \begin{equation}\label{defn_theta}
    \Theta := q \frac{d}{dq} = \frac{1}{2 \pi i} \frac{d}{dz} \: \text{where} \: q = e^{2 \pi i z}, \: \text{and} \:\:
    \displaystyle{\Theta(f(z)) := \Theta \left(\sum_{n \geq 0} a(n)q^{n} \right) = \sum_{n \geq 0} n a(n) q^{n}}.
    \end{equation}

It can be shown that that the derivative operator $\Theta$ fails to preserve modularity; however, it maps modular forms to quasi-modular forms. 

\begin{lemma}\label{theta_quasi}
Let $\gamma = \pMatrix{a}{b}{c}{d} \in \GL_{2}^{+}(\R)$.
\[
 \Theta(f) \mid_{k + 2}  \gamma =  \Theta(f \mid_k \gamma) + \frac{k}{2 \pi i} \frac{c}{c z + d} \: f \mid_k \gamma.
\]
\end{lemma}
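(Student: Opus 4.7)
The identity is a purely formal computation: one unfolds the definitions of $\Theta$ and of the weight-$k$ slash operator and differentiates. The plan is to compute $\Theta(f\mid_k\gamma)(z)$ directly from the definition in \eqref{SLO}, compare it to $\Theta(f)\mid_{k+2}\gamma$, and observe that the discrepancy is precisely the quasi-modular correction term.

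First I write
\[
(f\mid_k\gamma)(z) = (\det\gamma)^{k/2}(cz+d)^{-k}f(\gamma z),
\qquad \gamma z = \frac{az+b}{cz+d},
\qquad \frac{d(\gamma z)}{dz}=\frac{\det\gamma}{(cz+d)^{2}}.
\]
Applying $\Theta=\frac{1}{2\pi i}\frac{d}{dz}$ via the product and chain rules gives two terms: differentiating $(cz+d)^{-k}$ produces
\[
-\frac{kc}{2\pi i}(\det\gamma)^{k/2}(cz+d)^{-k-1}f(\gamma z) = -\frac{k}{2\pi i}\frac{c}{cz+d}(f\mid_{k}\gamma)(z),
\]
while differentiating $f(\gamma z)$ produces
\[
(\det\gamma)^{k/2+1}(cz+d)^{-k-2}\,\Theta(f)(\gamma z) = \bigl(\Theta(f)\mid_{k+2}\gamma\bigr)(z),
\]
since $(\det\gamma)^{(k+2)/2}=(\det\gamma)^{k/2+1}$ and $(cz+d)^{-(k+2)}=(cz+d)^{-k-2}$.

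Adding the two contributions and solving for $\Theta(f)\mid_{k+2}\gamma$ yields the claimed identity. The computation involves no obstruction; the only point to keep straight is the bookkeeping of the factor $(\det\gamma)^{k/2}$ versus $(\det\gamma)^{(k+2)/2}$ and the minus sign arising from $\frac{d}{dz}(cz+d)^{-k}$, which together are what force the correction term to appear with weight shift $k\mapsto k+2$ and coefficient $\frac{k}{2\pi i}\frac{c}{cz+d}$.
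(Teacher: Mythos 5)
Your computation is correct: differentiating $(f\mid_k\gamma)(z)=(\det\gamma)^{k/2}(cz+d)^{-k}f(\gamma z)$ by the product and chain rules, and recognizing the second term as $\Theta(f)\mid_{k+2}\gamma$ via $(\det\gamma)^{k/2+1}(cz+d)^{-k-2}=(\det\gamma)^{(k+2)/2}(cz+d)^{-(k+2)}$, gives exactly the stated identity after moving the $-\frac{k}{2\pi i}\frac{c}{cz+d}\,f\mid_k\gamma$ term to the other side. The paper states this lemma without proof, and your argument is precisely the standard verification one would supply.
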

\subsection{Modular forms with theta-multiplier}
The theta function is defined by 
\[
\theta(z) = \sum_{n\in \Z}q^{n^2}. 
\]
This is a modular form of weight $1/2$ on $\Gamma_{0}(4)$ satisfying the following transformation property:
\[
\theta \mid_{1/2} \gamma = \nu_{\theta}(\gamma) \theta(\gamma), \quad \gamma \in \Gamma_{0}(4),
\]
where $\nu_{\theta}$ is the theta-multiplier. For $\gamma = \pMatrix{a}{b}{c}{d} \in \Gamma_{0}(4)$, we have
\[
\nu_{\theta}(\gamma) = \left( \frac{c}{d} \right) \:\:  \epsilon_{d}^{-1} \:\: \text{where} \: 
\ep_d = \begin{cases} 
1\ \ &\text{if}\ \ \   d\equiv 1 \pmod{4},\\
i\ \ \ &\text{if}  \ \ \ d \equiv 3 \pmod{4}.
\end{cases}
\]

Shimura developed a beautiful theory that provides a connection between half-integer weight modular forms and integer weight forms.
\begin{definition}[Shimura Lift]
Let $k \geq 2$ and let $N \geq 1$. Let $\displaystyle{f(z) = \sum_{n = 1}^{\infty}} a(n)q^{n} \in {S}_{k + \frac{1}{2}} (4N, \chi \nu_{\theta}^{2k})$. Let $t$ be a square-free integer and let $\chi_{t}$ be the Dirchlet character modulo $4tN$ such that   
\[
\chi_{t}(m) = \chi(m) \left( \frac{-1}{m} \right) ^{k} \left( \frac{t}{m} \right).
\]
Let $A_{t}(n)$ be the complex numbers defined by
\[
\displaystyle{\sum_{n = 1}^{\infty} A_{t}(n) n^{-s} = L(s - k + 1, \chi_{t}) \left( \sum_{n=1}^{\infty} a(tn^{2}) n^{-s} \right)}.
\]
For all $n \geq 1$, we have:
\[
\displaystyle{A_{t}(n) = \sum_{d \mid n} \chi_{t}(d)  d^{k - 1} a(tn^{2}/d^{2})}.
\]
Then we have $\displaystyle{\textup{Sh}_{t}(f) = \sum_{n = 1}^{\infty} A_{t}(n) q^{n}}  \in S_{2 k}(2N , \chi^{2})$. Furthermore, Hecke equivariance holds.
\[
\textup{Sh}_{t}(f \mid T_{p^2}) = \textup{Sh}_{t}(f) \mid T_{p} \:\: \text{for all primes} \:\: p.
\]
Here, $\textup{Sh}_{t}(f)$ denotes the $t$-th Shimura lift of the function $f$ with respect to theta-multiplier.
\end{definition}

\subsection{Modular forms with eta-multiplier}
The Dedekind eta-function is defined by
\begin{equation}
\eta(z) = q^{1/24}\prod_{n=1}^{\infty}(1 - q^n). \label{eta}
\end{equation}
It is a modular form of weight 1/2 on $\text{SL}_{2}(\mathbb{Z})$ satisfying the following transformation law:
\begin{equation}\label{eq:etamult}
\eta\|_\frac12\gamma=\nu(\gamma) \: \eta \:\: \text{for all} \:\: \gamma\in \Gamma_{0}(1),
\end{equation}
where $\nu_{\eta}$ denotes the eta-multiplier.
We note that $\nu_{\eta}(\gamma) \in \mu_{24}$ for all $\gamma$. The explicit formulas for $\nu_{\eta}(\gamma)$ where $\gamma = \pMatrix{a}{b}{c}{d} \in \Gamma_{0}(1)$ and $c>0$ are given by:
\begin{equation}
\begin{aligned} \label{eq:nueta}
  &\nu_{\eta}(\gamma) = 
  \begin{dcases}
    \( \mfrac dc \) \, e\( \mfrac 1{24} \( (a+d)c-bd(c^2-1)-3c \) \),
  & \text{ if $c$ is odd}, \\
    \(\mfrac cd \) \, e\(\mfrac 1{24} \((a+d)c-bd(c^2-1)+3d-3-3cd\)\),
  & \text{ if $c$ is even,}
  \end{dcases}\\
  &\nu_{\eta}(-\gamma)=i\nu_{\eta}(\gamma).
  \end{aligned}
\end{equation}
The eta-function is a building block for modular forms.  Let $N\geq 1$.  An eta-quotient is a function of the form
\begin{equation}
\label{etaquot}
f(z) = \prod_{\delta \mid N} \eta(\delta z)^{r_{\delta}},
\end{equation}
where $\delta$ and $r_{\delta}$ are integers with $\delta \geq 1$. 
The following proposition (see for example, Proposition 5.9.2 of~\cite{Cohen2017ModularFA}) gives criteria for an eta-quotient to be a weakly holomorphic modular form. 

\begin{proposition}\label{GHN}
Let $f(z)$ be an eta-quotient as in \eqref{etaquot} with $k = \displaystyle{\frac{1}{2} \sum_{\delta\mid N}r_{\delta}\in \Z}$.  Suppose that $f(z)$ satisfies 
\[
\sum_{\delta \mid N}\delta r_{\delta} \equiv 0 \pmod{24} \ \ \text{and} \ \ 
N\sum_{\delta\mid N} \frac{r_{\delta}}{\delta} \equiv 0\pmod{24}.  
\]
Then for all $\gamma = \pMatrix{a}{b}{c}{d} \in \Gamma_0(N)$, we have 
$f(\gamma z) = \chi(d)(cz + d)^k f(z)$, where $\chi$ is defined by $\chi(d) = \left(\frac{(-1)^ks}{d}\right)$ with $s = \displaystyle{\prod_{\delta\mid N}\delta^{r_{\delta}}}$.
\end{proposition}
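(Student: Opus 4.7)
\emph{Proof sketch.} The plan is to reduce the transformation of $f$ under $\gamma = \pMatrix{a}{b}{c}{d}\in\Gamma_0(N)$ to the transformation law of $\eta$ itself. For each $\delta\mid N$, the condition $\delta \mid N \mid c$ ensures that $\gamma_\delta := \pMatrix{a}{\delta b}{c/\delta}{d}$ lies in $\SL_2(\Z)$, and one checks directly that $\gamma_\delta(\delta z) = \delta\gamma z$. Applying \eqref{eq:etamult} to each factor $\eta(\delta z)^{r_\delta}$ of $f$ yields
\[
f(\gamma z) \;=\; (cz+d)^k f(z)\, \prod_{\delta\mid N} \nu_\eta(\gamma_\delta)^{r_\delta},
\]
so it suffices to show that the remaining product equals $\bigl(\tfrac{(-1)^k s}{d}\bigr)$.

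The second step is to apply the explicit formula \eqref{eq:nueta} to each $\nu_\eta(\gamma_\delta)$, separating the cases $c/\delta$ odd and $c/\delta$ even. In both branches one writes $\nu_\eta(\gamma_\delta) = J_\delta\cdot e(E_\delta/24)$, where $J_\delta$ is a Kronecker-symbol factor in $c/\delta$ and $d$, and $E_\delta$ is an explicit integer polynomial in $a,b,c/\delta,d$. The key calculation is to show that $\sum_{\delta\mid N} r_\delta E_\delta \equiv 0\spmod{24}$ modulo residues depending only on $d$ and parity. The contributions linear in $c/\delta$ bundle into $c\sum_\delta r_\delta/\delta$, those linear in $\delta b$ bundle into $b\sum_\delta \delta r_\delta$, and similar bundlings handle the remaining cross terms. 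Because $N\mid c$, the two hypotheses $\sum_\delta \delta r_\delta\equiv 0$ and $N\sum_\delta r_\delta/\delta\equiv 0\spmod{24}$ are designed exactly to kill these bundles modulo~$24$.

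In the final step, one combines the Kronecker-symbol pieces: by multiplicativity and the identity $\prod_\delta (c/\delta)^{r_\delta} = c^{2k}/s$, the product $\prod_\delta J_\delta^{r_\delta}$ collapses to $\bigl(\tfrac{s}{d}\bigr)$ up to residual signs, while the $\epsilon_d^{-1}$ contributions from the even-$c/\delta$ branch, combined with the residual phase from the second step, produce an extra factor of $\bigl(\tfrac{-1}{d}\bigr)^k$. The two factors multiply to the asserted character $\bigl(\tfrac{(-1)^k s}{d}\bigr)$. The hard part will be the combinatorial bookkeeping in the second step — tracking the $\epsilon_d$ factors and sign conventions uniformly across the two parity branches of $c/\delta$, and verifying that the two congruence hypotheses really do eliminate every $\delta$-dependent residue modulo $24$. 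A useful sanity check is that both sides are multiplicative in $\gamma$, so the identity need only be verified on a set of generators for $\Gamma_0(N)$.
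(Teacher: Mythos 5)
The paper does not prove this proposition; it is quoted verbatim from the literature (Proposition 5.9.2 of Cohen--Str\"omberg, going back to Ligozat and Gordon--Hughes--Newman), so there is no in-paper argument to compare against. Your sketch is precisely the standard proof from those sources: the conjugated matrices $\gamma_\delta = \pmatrix{a}{\delta b}{c/\delta}{d} \in \SL_2(\Z)$ with $\gamma_\delta(\delta z) = \delta\gamma z$, the resulting reduction to showing $\prod_{\delta\mid N}\nu_\eta(\gamma_\delta)^{r_\delta} = \ptfrac{(-1)^k s}{d}$, and the mod-$24$ bookkeeping in which the two hypotheses kill exactly the terms $b\sum_\delta \delta r_\delta$ and $c\sum_\delta r_\delta/\delta$ (the latter using $N\mid c$). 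Your closing identities also check out: $\sum_\delta r_\delta = 2k$ gives $\prod_\delta(c/\delta)^{r_\delta} = c^{2k}/s$, and $(c,d)=1$ makes the factor $\ptfrac{c}{d}^{2k}$ disappear. Two points you should not gloss over when writing this up in full: the explicit formula \eqref{eq:nueta} is stated only for $c>0$, so the cases $c=0$ (where the first congruence hypothesis alone gives invariance under $\pm T^b$) and $c<0$ (via $\nu_\eta(-\gamma)=i\nu_\eta(\gamma)$) need separate, if short, treatment; and the parity of $c/\delta$ genuinely varies with $\delta$ even for fixed $\gamma$, so the two branches of \eqref{eq:nueta} get mixed within a single product --- this, together with quadratic reciprocity to convert $\ptfrac{d}{c/\delta}$ into $\ptfrac{c/\delta}{d}$ in the odd branch, is where the $\ptfrac{-1}{d}^k$ emerges and where the real work lies. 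Your remark that both sides define characters of $\Gamma_0(N)$, so it suffices to check generators, is a legitimate alternative that sidesteps much of this bookkeeping. As submitted the proposal is a correct outline with the central computation deferred rather than carried out.
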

\noindent
We also require the formula (see for example, Proposition 5.9.3 of \cite{Cohen2017ModularFA}) for the order of vanishing of an eta-quotient at a cusp $\frac{c}{d} \in \mathbb{P}^{1}(\mathbb{Q})$.

\begin{proposition} \label{CE}
Let $N$, $c$, and $d$ be positive integers with $\gcd(c, d) = 1$,
and let $f(z)$ be a level $N$ eta-quotient as in \eqref{etaquot}.  Then the order of vanishing of $f(z)$ at the cusp $\frac{c}{d}$ is given by
\[
\textup{ord}_{\frac{c}{d}}(f) = \frac{1}{24} \sum_{\delta \mid N} \frac{(\textup{gcd}(d, \delta))^{2} \: r_{\delta}}{\delta}.
\]
\end{proposition}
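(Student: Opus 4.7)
The plan is to choose $\gamma = \pMatrix{c}{c^*}{d}{d^*} \in \SL_2(\Z)$ sending $\infty$ to the cusp $c/d$, and then compute the leading Fourier exponent of $f\mid_k\gamma$ at $\infty$ one eta-factor at a time. Since order of vanishing is additive over products, the whole problem reduces to finding, for each $\delta\mid N$, the local order of $\eta(\delta z)$ at $c/d$.

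For a fixed $\delta$, I set $M_\delta := \pMatrix{\delta}{0}{0}{1}\gamma = \pMatrix{\delta c}{\delta c^*}{d}{d^*}$, so that (up to a harmless constant) $\eta(\delta z)\mid_{1/2}\gamma = \eta\mid_{1/2}M_\delta$. Then $\det M_\delta = \delta$, and because $\gcd(c,d)=1$, the first column of $M_\delta$ has gcd $\gcd(\delta c, d)=\gcd(\delta, d)=:g_\delta$. Performing Hermite reduction by integral row operations then produces $M_\delta = \gamma_\delta T_\delta$ with $\gamma_\delta \in \SL_2(\Z)$ and an upper-triangular $T_\delta = \pMatrix{g_\delta}{B_\delta}{0}{\delta/g_\delta}$ for some integer $B_\delta$ (solvable because $\gcd(d/g_\delta, \delta/g_\delta)=1$). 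The $\eta$-transformation law \eqref{eq:etamult} gives
\[
\eta\mid_{1/2}M_\delta \;=\; \nu_\eta(\gamma_\delta)\,\eta\mid_{1/2}T_\delta,
\]
and since $\nu_\eta(\gamma_\delta)$ is a $24$th root of unity, only $T_\delta$ affects the order of vanishing.

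Unfolding \eqref{SLO} and the product expansion \eqref{eta} with the explicit $T_\delta$ yields that $\eta\mid_{1/2}T_\delta(w)$ equals a nonzero constant times
\[
e^{2\pi i g_\delta^{2} w/(24\delta)}\prod_{n\ge 1}\left(1 - e^{2\pi i n(g_\delta^{2}w+g_\delta B_\delta)/\delta}\right),
\]
so $\eta(\delta z)$ vanishes to order $g_\delta^{2}/(24\delta)$ at $c/d$. Weighting by $r_\delta$ and summing over $\delta\mid N$ gives $\textup{ord}_{c/d}(f)=\frac{1}{24}\sum_{\delta\mid N}\gcd(d,\delta)^{2}r_\delta/\delta$, as claimed. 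The main obstacle is really just the Hermite bookkeeping together with fixing the convention that `order at a cusp' is measured in the global parameter $q=e^{2\pi i w}$ rather than in a width-normalized local parameter; once these are pinned down the remaining calculation is routine, and I would match it against the treatment in Cohen--Str\"omberg.
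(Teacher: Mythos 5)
Your argument is correct: reducing $\pMatrix{\delta}{0}{0}{1}\gamma$ to an upper-triangular matrix $\pmatrix{g_\delta}{B_\delta}{0}{\delta/g_\delta}$ with $g_\delta=\gcd(\delta c,d)=\gcd(\delta,d)$ modulo $\SL_2(\Z)$, discarding the unitary multiplier and nonvanishing automorphy factor, and reading off the exponent $g_\delta^2/(24\delta)$ in $q=e^{2\pi i w}$ is exactly the standard derivation of Ligozat's formula, and your convention (global parameter $q$, not the width-normalized one) matches the paper's, which explicitly rescales by the width $h_{d,N}$ afterward. The paper itself gives no proof of this proposition --- it is quoted from Proposition 5.9.3 of Cohen--Str\"omberg --- so your write-up simply supplies the omitted standard argument; the only cosmetic remark is that your parenthetical ``solvable because $\gcd(d/g_\delta,\delta/g_\delta)=1$'' is automatic (and the Hermite reduction needs no such hypothesis anyway).
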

\noindent
We note that, with respect to $\Gamma_0(N)$, the width of the cusp $\frac{c}{d}$ is $h_{d, N} = \frac{N}{\gcd(d^2, N)}$.  Therefore, the order of vanishing of the eta-quotient $f(z)$ in the local variable $q_{h_{d, N}} = e^{\frac{2\pi iz}{h_{d, N}}}$ is $h_{d, N}\cdot\text{ord}_{\frac{c}{d}}(f)$. We also observe that $\textup{ord}_{\frac{c}{d}}(f) = \textup{ord}_{\frac{1}{d}}(f)$. Therefore, the eta-quotient is holomorphic if and only if for all $d \mid N$, we have $\textup{ord}_{\frac{1}{d}}(f)  \geq 0$. We also define the denominator $D$ to be the least positive integer such that for all $d \mid N$, we have
\begin{equation}
D \cdot ord_{\mfrac{1}{d}}(f) \in \Z.
\end{equation}

We turn our attention to forms $f$ that transform with regards to a power of eta-multiplier times a Dirichlet character. We begin by studying the action of $U_p$ and $V_p$ operator on these forms. 
Let $(r, 24) = 1$ and let $f = \displaystyle{\sum_{n\equiv r\spmod{24}}} a(n) q^{n} \in M_{k}(N,\chi\nu_\eta^r)$, and that $m$ is a positive integer. 
Define 
\begin{align*}
  f \mid U_m:=\sum a(mn)q^\frac n{24}
\quad {and}\quad
  f \mid V_m:=\sum a(n)q^\frac {mn}{24}.
\end{align*}
\noindent
For a prime $p \geq 5$, and for a character modulo $p$, $\chi_{p} = \left( \frac{\cdot}{p} \right)$, Lemma 2.1 of \cite{ABR} implies that
\begin{equation}\label{action_UV}
  U_p \,:\, M_k\left(N, \chi \nu_\eta^r\right) \longrightarrow M_k\left(N \mfrac p{(N, p)},\, \chi\chi_p\nu_\eta^{pr}\right) \:\: \text{and} \:\:
  V_p \,:\, M_k(N, \chi\nu_\eta^r) \longrightarrow M_k\left(Np, \,\chi\chi_p\nu_\eta^{pr}\right).
\end{equation}
Our next lemma encodes the action of the $U_{p}$ and $V_{p}$ operator for a prime $p \in \{2,3\}$. We recall that $k \in \Z$ if and only if $r$ is even and $k \in \Z + \frac{1}{2}$ if and only if $r$ is odd.
\begin{lemma} \label{action_U_V}
Let $r \in \Z$ and $\chi \in \hat{G}_{N}$.
\begin{enumerate}
\item Let $f \in M_{k}^{!}(N, \chi \nu_{\eta}^{r})$. 
 \begin{enumerate}
 \item If $3 \mid r$, then we have
 \begin{equation}
\begin{aligned}
  &f \mid_k U_{3} = 
  \begin{dcases}
    M_{k}^{!}\left(3N, \chi \left( \frac{\cdot}{3} \right)^r \nu_{\eta}^{3r}\right),
  & \text{$3 \nmid N$}, \\
  M_{k}^{!}\left(N, \chi \left( \frac{\cdot}{3} \right)^r \nu_{\eta}^{3r}\right),
  & \text{$3 \parallel N$}, \\
    M_{k}^{!}\left(N/3, \: \chi \left( \frac{\cdot}{3} \right)^r \nu_{\eta}^{3r}\right),
  & \text{$3^2 \mid N$}.
\end{dcases}
\end{aligned}
\end{equation}
Furthermore, if $6 \mid r$ and $3 \nmid N$, then we have $f \mid_k U_{3} \in M_{k}^{!}(3N, \chi \nu_{\eta}^{3r})$.
\item  Let $f \in M_{k}^{!}(N, \chi \nu_{\eta}^{r})$. We have
 \begin{equation}
\begin{aligned}
  &f \mid_k V_{3} = 
  \begin{dcases}
    M_{k}^{!}\left(3N, \chi \left( \frac{\cdot}{3} \right)^r \nu_{\eta}^{3r}\right),
  & \text{$3 \mid N$ or $3 \mid r$}, \\
  M_{k}^{!}\left(9N, \chi \left( \frac{\cdot}{3} \right)^r \nu_{\eta}^{3r}\right),
  & \text{$3 \nmid N$ and $3 \nmid r$}.
\end{dcases}
\end{aligned}
\end{equation}    
\end{enumerate}
\item Let $f \in M_{k}^{!}(N, \chi \nu_{\eta}^{2r})$. 
\begin{enumerate}
\item We have  $f \mid_k V_{2} \in M_{k}^{!}\left( \frac{8N}{\gcd(4, N)}, \left( \frac{-4}{\cdot} \right)^{r} \chi \nu_{\eta}^{4r}  \right)$.
 \item Let $t \geq 1$. We have $\left(\eta^2 \mid_1 V_{2^{t}} \right)(z) \in S_{1}\left( 4 \cdot 2^{t}, \left( \frac{-4}{\cdot} \right) \nu_{\eta}^{2 \cdot 2^t} \right)$ and 
\begin{equation}
\begin{aligned}
  &\left(\eta \mid_{1/2} V_{2^{t}}\right)(z) = 
  \begin{dcases}
   S_{1/2}\left( 16, \left( \frac{-4}{\cdot} \right) \nu_{\theta} \cdot \nu_{\eta}^{2} \right),
  & \text{$t = 1$}, \\
    S_{1/2}\left( 8 \cdot 2^{t}, \left( \frac{8}{\cdot} \right)^{t - 1} \nu_{\theta} \cdot \nu_{\eta}^{2^t} \right),  & \text{$t \geq 2$}.
\end{dcases}
\end{aligned}
\end{equation}
 \end{enumerate} 
 \item Let $k \in \Z$ and let $f \in M_{k}^{!}(N, \chi \nu_{\eta}^{r})$. We have
 \begin{equation}
\begin{aligned}
  &f \mid_k U_{2} = 
  \begin{dcases}
    M_{k}^{!}\left(8N, \chi \left( \frac{-4}{\cdot} \right)^{r/2} \nu_{\eta}^{2r}\right),
  & \text{$2  \parallel r$}, \\
  M_{k}^{!}\left(4N, \chi \nu_{\eta}^{2r}\right),
  & \text{$2^2 \mid r$}, \\
    M_{k}^{!}\left(2N, \: \chi \nu_{\eta}^{2r}\right),
  & \text{$4^2 \mid r$}.
\end{dcases}
\end{aligned}
\end{equation}
 \end{enumerate}
\end{lemma}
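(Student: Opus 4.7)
The plan is to extend the method of Lemma 2.1 of \cite{ABR}, which treats primes $p \geq 5$, to the delicate primes $p \in \{2,3\}$ that divide $24$, the order of $\nu_{\eta}$. For each operator $X_p \in \{U_p, V_p\}$ I will verify the claimed transformation law
\[
(f \mid_k X_p) \mid_k \gamma = \chi'(d)\,\nu_{\eta}(\gamma)^{r'}\,(f \mid_k X_p)
\]
for a general $\gamma = \pmatrix{a}{b}{c}{d}$ in the predicted congruence subgroup, with $\chi'$ and $r'$ the target character and eta-power stated in the lemma. Weak holomorphy at the cusps is automatic since $X_p$ preserves $M_k^!$.

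For part (1), I begin from the coset decomposition $(f \mid_k U_3)(z) = 3^{k/2-1} \sum_{j=0}^{2} f \mid_k \pmatrix{1}{j}{0}{3}$ together with the standard identity $\pmatrix{1}{j}{0}{3}\gamma = \gamma_j \pmatrix{1}{j'}{0}{3}$ for appropriate $\gamma_j \in \SL_2(\Z)$ and permutation $j \mapsto j'$. Applying \eqref{eq:nueta} to each $\gamma_j$ and summing must collapse the transformation multiplier to $\chi(d)\,\pfrac{d}{3}^{r}\,\nu_{\eta}(\gamma)^{3r}$, and the three-way split on $N$ captures how much of the $3$-part of the level can be absorbed once $3 \mid c$ is forced. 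The $V_3$ case is easier: the conjugate $\pmatrix{3}{0}{0}{1}\gamma\pmatrix{3}{0}{0}{1}^{-1}$ must lie in $\SL_2(\Z)$, which dictates the required $3$-divisibility of $c$, and then a single application of \eqref{eq:nueta} produces the multiplier. Part 2(a) follows the same pattern for $V_2$, with the factor $\pfrac{-4}{\cdot}^{r}$ coming from the parity-of-$c$ branch in \eqref{eq:nueta}; for the sharper 2(b) statements on $\eta \mid V_{2^t}$ and $\eta^{2} \mid V_{2^t}$, I exploit that $\eta$ and $\eta^{2}$ are modular on all of $\SL_2(\Z)$ and induct on $t$, distinguishing the case $t=1$ because the factor $\pfrac{8}{\cdot}^{t-1}$ is trivial there while the theta-multiplier $\nu_{\theta}$ already appears. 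Part (3) is the $U_2$ analogue of part (1); the three subcases ($2 \parallel r$, $4 \mid r$, $16 \mid r$) reflect successive triviality of $\nu_{\eta}^{r}$ on progressively smaller levels.

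The main obstacle is the quadratic-reciprocity bookkeeping: the two branches of \eqref{eq:nueta} depending on parity of the lower-left entry, together with the Kronecker symbols $\pfrac{c}{d}$ versus $\pfrac{d}{c}$, have to be tracked through each matrix conjugation and coset rearrangement. One useful simplification is that for fixed weight and level the multiplier system making a given nonzero form modular is unique up to a global scalar, so after verifying invariance under a single suitable generator such as $\pmatrix{1}{0}{N'}{1}$, the full claim reduces to matching against the predicted multiplier. A second simplification, particularly relevant for the level-reduction statements in parts (1) and (3), is that the subcases are nested: one first proves the coarsest statement (largest level), then verifies that under stronger divisibility hypotheses the extra generators of the smaller-level subgroup act trivially, thereby descending to the smaller level without redoing the base computation.
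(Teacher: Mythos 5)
Your plan---push the coset-rearrangement proof of Lemma 2.1 of \cite{ABR} down to $p\in\{2,3\}$ and track the multiplier through \eqref{eq:nueta} directly---is a genuinely different route from the paper's. The paper never touches the explicit formula \eqref{eq:nueta} at these primes: for part (1.a) it observes that $\eta(3z)^{3r}$ has multiplier $\left(\frac{\cdot}{3}\right)^{r}\nu_{\eta}^{9r}=\left(\frac{\cdot}{3}\right)^{r}\nu_{\eta}^{r}$ precisely when $3\mid r$ (since $\nu_{\eta}^{24}=1$), divides $f$ by $\eta^{3r}(3z)$ to land in a classical space $M^{!}_{\ast}\bigl(\tfrac{3N}{(N,9)},\chi\left(\frac{\cdot}{3}\right)^{r}\bigr)$ with no eta-multiplier, applies the classical theory of $U_3$ there, and uses Proposition \ref{U_factor} to rewrite $\bigl(f/\eta^{3r}(3\cdot)\bigr)\mid U_3$ as $(f\mid U_3)/\eta^{3r}$ before multiplying back by $\eta(z)^{3r}\in S_{3r/2}(1,\nu_{\eta}^{3r})$. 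All other cases are handled the same way with a different auxiliary eta-quotient. This buys a short, uniform argument in which the level bookkeeping is inherited from the classical operators.

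Your approach could in principle be carried out, but as written it has two genuine gaps. First, the divisibility hypotheses on $r$ ($3\mid r$ in (1.a), $6\mid r$ for the refinement, $2\parallel r$ versus $2^2\mid r$ versus $4^2\mid r$ in (3)) never enter your argument, yet they are exactly what make the computation close up: when you rearrange $\pmatrix{1}{j}{0}{3}\gamma=\gamma_j\pmatrix{1}{j'}{0}{3}$, the values $\nu_{\eta}(\gamma_j)$ are $24$th roots of unity that genuinely depend on $j$, and only after raising to the $r$th power with the stated divisibility does the $j$-dependence (the $3$-part, resp.\ $2$-part, of the root of unity) disappear so that the sum over cosets transforms with a single multiplier. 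A proof that does not exhibit this cancellation proves nothing, and without the hypothesis the statement is false. Second, your proposed shortcut---verify invariance under one generator such as $\pmatrix{1}{0}{N'}{1}$ and invoke uniqueness of the multiplier system---is unsound: two multiplier systems of the same weight on $\Gamma_0(N')$ differ by a character of $\Gamma_0(N')$, whose abelianization is not cyclic in general, so triviality on a single generator does not force triviality; and the premise that $f\mid X_p$ transforms under \emph{some} multiplier on the whole target group is precisely what the full computation is needed to establish. If you want to avoid the $j$-by-$j$ reciprocity bookkeeping, the paper's device of stripping the multiplier with an auxiliary eta-power and citing Proposition \ref{U_factor} is the cleaner repair.
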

\begin{proof}
  We prove (1.a) of Lemma \ref{action_U_V}; we use a similar approach to prove (1.b), (2.a), (2.b) and (3).
  
  To begin with, we consider $r \in \Z$ be odd with $3 \mid r$, and let $k \in \Z + \frac{1}{2}$. Let $\chi \in \hat{G}_{N}$ and let $f(z) \in  M_{k}^{!}(N, \chi \nu_{\eta}^{r})$. Using Proposition \ref{GHN} and Proposition \ref{CE}, we have $\frac{\eta(z)^9}{\eta(3z)^3} \in M_{3}\left( 3, \left( \frac{\cdot}{3} \right) \right)$  and $\eta(z)^9 \in S_{9/2}(1, \nu_{\eta}^9)$ which implies that $\eta(3z)^3 \in S_{3/2}\left( 3, \left( \frac{\cdot}{3} \right)\nu_{\eta}^9  \right)$. It follows that $\eta(3z)^{3r} \in S_{3r/2}\left( 3, \left( \frac{\cdot}{3} \right)^{r} \nu_{\eta}^r  \right)$. We compute
  \begin{align*}
      \frac{f(z)}{\eta^{3r}(3z)} &\in M_{k - \left(\frac{3r - 1}{2} \right)}^{!}\left( \mfrac{3N}{(N, 9)}, \chi \left( \frac{\cdot}{3} \right)^{r} \right), \\
      \frac{f(z)}{\eta^{3r}(3z)} \mathrel{\bigg|} U_3 &= \frac{(f \mid U_{3})(z)}{\eta^{3r}(z)} \in M_{k - \left(\frac{3r - 1}{2} \right)}^{!}\left( \mfrac{3N}{(N, 9)}, \chi \left( \frac{\cdot}{3} \right)^{r} \right), \:\: \text{using Proposition \ref{U_factor}} \\
       (f \mid U_{3})(z) &\in M_{k + \frac{1}{2}}^{!}\left(\mfrac{3N}{(N, 9)}, \chi \left( \frac{\cdot}{3} \right)^{r} \nu_{\eta}^{3r} \right).
\end{align*}
Next, we consider the case when $6 \mid r$. Let $k \in \Z$. From earlier work, it follows that $\eta(3z)^{r} \in S_{r/2}\left(3, \nu_{\eta}^{3r} \right)$. We compute
\begin{align*}
    \eta(3z)^r f(z) &\in M_{k + \frac{r}{2}}^{!}\left( 3N, \chi \right) \:\: \text{since} \:\: 4r \equiv 0 \spmod{24}, \:\: \\
    \eta(z)^r ((f \mid U_{3})(z)) &\in M_{k + \frac{r}{2}}^{!}\left( 3N, \chi \right), \:\: \text{using Proposition \ref{U_factor}} \\
    (f \mid U_{3})(z) &\in M_{k}^{!}\left(3N, \chi \nu_{\eta}^{3r} \right).
\end{align*}
\end{proof}
Let $f$ be as defined above and let $p \geq 5$ prime. We next recall the notion of twisting $f$ by $\chi_p$ as follows:
\begin{equation} \label{twist}
f \otimes \chi_p = \sum \chi_p(n)a(n)q^n.  
\end{equation} 
By \S 2 of \cite{ABR}, it follows that $f \otimes \chi_{p} \in M_{k}^{!}(N p^2, \chi \nu_{\eta}^r)$.

\medskip

In 1991, Mersmann \cite{mersmann} showed that there exist finitely many holomorphic eta-quotients that are spanned by theta functions. His result was refined by  
Oliver \cite{oliver} who provided a complete classification of eta-quotients that are theta functions as defined in \eqref{gen_theta} associated with the theta multiplier. In the following lemma, we provide a corresponding classification for eta-quotients that are theta functions with the eta-multiplier. 

\begin{lemma} 
\begin{enumerate} \label{theta_series}
\item Let $\chi$ be an even Dirichlet character modulo $N$ and let $D$ denote the denominator of the eta-quotient.
    \begin{equation} \label{theta_even}
    \Theta_{\chi}(z) = \sum_{n \geq 1} \chi(n) q^{\frac{n^2}{D}} \in M_{1/2}(N, \psi \nu_{\eta}^r). 
    \end{equation}
    \begin{enumerate}
  \item  We provide a list of theta functions of the form \eqref{theta_even} associated with the eta-multiplier. All of them are expressible as eta-quotients.
\renewcommand{\arraystretch}{1.5} 
\setlength{\tabcolsep}{12pt} 
\begin{table}[ht]
  \centering
\begin{tabular}{|c|c|c|c|c|c|}
\hline
$D$ & $\chi$ & $N$ & $\psi$ & $r$ & \text{eta-quotient} \\ 
\hline
8 & $1_2$ & 8 & $\left(\mfrac{-4}{\cdot} \right)$ & 3 & $\mfrac{\eta^{2}(2z)}{\eta(z)}$ \\
 \hline
8 & $\left(\mfrac{8}{\cdot}\right)$ & 32 & $\left(\mfrac{-8}{\cdot}\right)$ & 3 & $\mfrac{\eta(z) \eta(4z)}{\eta(2z)}$ \\
 \hline
24 & $\left(\mfrac{12}{\cdot}\right)$ & 1 & $1_1$ & 1 &  $\eta(z)$\\
 \hline
24 & $\left(\mfrac{24}{\cdot}\right)$ & 32 & $\left(\mfrac{8}{\cdot}\right)$ & 1 & $\mfrac{\eta^3(2z)}{\eta(z) \eta(4z)}$ \\
\hline
24 & $1_6$ & 144 & $\left(\mfrac{12}{\cdot}\right)$ & 1 & $\mfrac{\eta(2z) \eta^2(3z)}{\eta(z) \eta(6z)}$  \\
\hline
24 & $\left(\mfrac{8}{\cdot}\right) 1_{3}$ & 288 & $\left(\mfrac{24}{\cdot}\right)$ & 1 & $\mfrac{\eta(z) \eta(4z) \eta^5(6z)}{\eta^2(2z) \eta^2(3z) \eta^2(12z)}$\\
 \hline
\end{tabular}
  \caption{Parameter Values in \eqref{theta_even} for $N, \chi$ and $r$.}
\end{table}
\end{enumerate}
 \item Let $\chi$ be an odd Dirichlet character modulo $N$ and let $D$ denote the denominator of the eta-quotient.
    \begin{equation} \label{theta_odd}
    \Theta_{\chi}(z) = \sum_{n \geq 1} \chi(n) n q^{\frac{n^2}{D}} \in M_{3/2}(N, \psi \nu_{\eta}^r). 
    \end{equation}
    \begin{enumerate}
  \item  We provide a list of theta functions of the form \eqref{theta_odd} associated with the eta-multiplier. The first four in the list are expressible as eta-quotients while the last two lie outside this class.
  
 \renewcommand{\arraystretch}{1.8} 
\setlength{\tabcolsep}{12pt} 
\begin{table}[ht]
  \centering
\begin{tabular}{|c|c|c|c|c|c|}
\hline
$D$ & $\chi$ & $N$ & $\psi$ & $r$ & \text{eta-quotient} \\ 
\hline
8 & $\left(\mfrac{-4}{\cdot}\right)$ & 1 & $1_1$ & 3 & $\eta^{3}(z)$ \\
 \hline
8 & $\left(\mfrac{-8}{\cdot}\right)$ & 32 & $\left(\mfrac{8}{\cdot}\right)$ & 3 & $\mfrac{\eta^9(2z)}{\eta^{3}(z) \eta^3(4z)}$ \\
 \hline
24 & $\left(\mfrac{-3}{\cdot}\right) 1_{2}$ & 8 & $\left(\mfrac{-4}{\cdot}\right)$ & 1 & $\mfrac{\eta^5(z)}{\eta^{2}(2z)}$ \\
 \hline
24 & $\left(\mfrac{-24}{\cdot}\right)$ & 32 & $\left(\mfrac{-8}{\cdot}\right)$ & 1 & $\mfrac{\eta^{13}(2z)}{\eta^{5}(z) \eta^5(4z)}$ \\
 \hline
8 & $\left(\mfrac{-4}{\cdot}\right) 1_{3}$ & 9 & $1_1$ & 3 & \\
 \hline
8 & $\left(\mfrac{-8}{\cdot}\right) 1_{3}$ & 288 & $\left(\mfrac{8}{\cdot}\right)$ & 3 & \\ 
\hline 
\end{tabular}
  \caption{Parameter Values in \eqref{theta_odd} for $N, \chi$ and $r$.}
    \label{tab:eta_odd}
\end{table}
\end{enumerate}
\end{enumerate}
\end{lemma}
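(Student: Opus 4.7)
The lemma has two obligations: first, to verify that each eta-quotient appearing in the two tables actually equals the claimed theta series $\Theta_\chi(z)$ and lies in the stated space $M_{k}(N,\psi\nu_\eta^r)$; and second, to establish that the listed functions exhaust all eta-quotients which are theta series (in the sense of \eqref{theta_even}, \eqref{theta_odd}) with respect to the eta-multiplier. My plan is to handle verification entry-by-entry via Jacobi-type identities, and to handle completeness by translating Oliver's classification for the theta-multiplier.

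For the verification direction, the two anchor identities are Euler's $\eta(z)=\sum_{n\ge 1}\chi_{12}(n)q^{n^2/24}$ and Jacobi's $\eta^3(z)=\sum_{n\ge 1}\chi_{-4}(n)\,n\,q^{n^2/8}$. Every remaining eta-quotient in Tables I and II will be obtained from these by shifting the argument (applying $V_t$, which replaces $D$ with $tD$) and taking products/ratios. Thus for each entry I would expand the eta-quotient via \eqref{eta}, collect factors into Jacobi triple-product form
\begin{equation*}
\prod_{n\ge 1}(1-q^{2n})(1+xq^{2n-1})(1+x^{-1}q^{2n-1})=\sum_{n\in\Z}x^n q^{n^2},
\end{equation*}
and read off the resulting character $\chi$ and denominator $D$. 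For instance, writing $\eta^{2}(2z)/\eta(z)=q^{1/8}\prod(1+q^n)(1-q^{2n})=\sum_{n\ge 0}q^{(2n+1)^2/8}$ produces $\chi=1_2$ with $D=8$, matching the first row of Table I. The weight, level, character, and multiplier assignment on the right-hand side of the tables is then read off from Propositions \ref{GHN} and \ref{CE} (via the balance and cusp-order conditions), combined with \eqref{eq:nueta} to pin down the eta-multiplier exponent $r$.

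For completeness, my plan is to leverage Oliver's classification of eta-quotients that are theta functions with the theta-multiplier. The bridge between the two multiplier theories is that $\nu_\eta^{2}$ differs from $\nu_\theta$ (on the relevant subgroup) by an explicit Dirichlet character, and quotients like $\eta^5(2z)/(\eta^2(z)\eta^2(4z))=\theta(z)$ convert between the two settings. Concretely, if $F$ is an eta-quotient equal to a theta series and transforms with $\nu_\eta^r$, then multiplying or dividing by a fixed eta-quotient realizing $\theta(z)$ produces an eta-quotient with a theta-multiplier that must lie on Oliver's list; running the correspondence backwards then recovers the finite list in Tables I and II. For the last two rows of Table II, the theta series exists but no eta-quotient realization does — these correspond to Oliver's entries whose conversion back to the eta-multiplier side leaves a residual character mod $3$ that cannot be absorbed into an eta-product, so the table records them as theta series without an eta-quotient column.

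The main obstacle is the bookkeeping in the completeness step: tracking how the exponent $r$ of $\nu_\eta^r$, the modulus of $\chi$, and the denominator $D$ all transform under the translation between theta-multiplier and eta-multiplier settings, especially in the presence of $U_t$ and $V_t$ operators governed by Lemma \ref{action_U_V}. In addition, verifying that no further eta-quotient realization exists for the last two rows of Table II requires a direct argument (e.g., ruling out solutions to the system in Proposition \ref{GHN} with the required character mod $3$ and denominator dividing $8$), which must be handled separately from the Oliver-transfer argument.
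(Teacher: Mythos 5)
The paper itself states this lemma with no proof at all: it is presented as a classification ``corresponding'' to Oliver's, with the tables asserted and only the two remarks afterwards (expressing the last two rows of Table \ref{tab:eta_odd} as linear combinations $\eta^3(z)+3\eta^3(9z)$ and $\frac{\eta^9(2z)}{\eta^3(z)\eta^3(4z)}-3\frac{\eta^9(18z)}{\eta^3(9z)\eta^3(36z)}$). So there is nothing in the paper to compare against line by line, and your proposal is a genuine attempt to supply what the authors omitted. Your verification half is sound: the anchor identities $\eta(z)=\sum_{n\ge 1}\pfrac{12}{n}q^{n^2/24}$ and $\eta^3(z)=\sum_{n\ge 1}\pfrac{-4}{n}nq^{n^2/8}$, together with the Gauss/Jacobi product manipulations and Propositions \ref{GHN} and \ref{CE} for the level, character, and multiplier data, will verify every row of both tables, and your sample computation for $\eta^2(2z)/\eta(z)$ is correct.

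The completeness half, however, contains a genuine gap in the bridge to Oliver's theorem. Oliver classifies eta-quotients that \emph{equal} theta series $\sum\chi(n)n^{\nu}q^{n^2}$; he does not classify eta-quotients that merely \emph{transform} under $\nu_\theta$. If $F$ is an eta-quotient equal to a theta series with denominator $D$, then $F\cdot\theta(z)^{\pm 1}$ is a weight $1$ (or weight $-1/2$, or $2$) object that is a \emph{product} of two theta series, not a theta series, so it is not an object to which Oliver's classification applies; ``must lie on Oliver's list'' does not follow. The correct bridge is the substitution $z\mapsto Dz$: it carries $\sum\chi(n)n^{\nu}q^{n^2/D}$ to $\sum\chi(n)n^{\nu}q^{n^2}$, carries $\prod\eta(\delta z)^{r_\delta}$ to $\prod\eta(D\delta z)^{r_\delta}$, and converts the $\nu_\eta^r$-transformation into a genuine Dirichlet nebentypus via Proposition \ref{GHN}. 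Under this dictionary every eta-quotient of the form \eqref{theta_even} or \eqref{theta_odd} corresponds bijectively to an entry of Oliver's list all of whose arguments $\delta$ are divisible by $D$, and conversely; completeness of the tables then follows from completeness of Oliver's list, and the nonexistence of eta-quotient realizations for the last two rows of Table \ref{tab:eta_odd} is automatic (those theta series are not on Oliver's list as eta-quotients in the first place), so the separate ``direct argument'' you flag at the end is not needed. With the bridge repaired in this way your outline becomes a correct proof; as written, the transfer step does not go through.
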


\newpage

{\bf{Remarks:}}
\begin{enumerate} [label=(\alph*)]
    \item  We note that theta series corresponding to $\chi = \left( \mfrac{-4}{\cdot} \right) 1_3$ and $\chi = \left( \mfrac{-8}{\cdot} \right) 1_3$ are not eta-quotients but arise as a linear combination of eta products as follows:
     \begin{align} 
     \sum_{n \geq 1} \left( \mfrac{-4}{n} \right) \: 1_3(n) \: n q^{\mfrac{n^2}{8}} &= \sum \left( \mfrac{-4}{n} \right) \: n q^{\mfrac{n^2}{8}} + 3 \sum \left( \mfrac{-4}{n} \right) \: n q^{\mfrac{n^2}{8}} \mathrel{\bigg|} V_9, \\
     &= \eta^3(z) + 3 \cdot \eta^3(9z) \in S_{3/2}(9, \nu_{\eta}^{3}). \label{eta_prod1} \\
     \sum_{n \geq 1} \left( \mfrac{-8}{n} \right) \: 1_3(n) \: n q^{\mfrac{n^2}{8}} &= \sum \left( \mfrac{-8}{n} \right) \: n q^{\mfrac{n^2}{8}} - 3 \sum \left( \mfrac{-8}{n} \right) \: n q^{\mfrac{n^2}{8}} \mathrel{\bigg|} V_9, \\
    & = \mfrac{\eta^9(2z)}{\eta^3(z) \eta^3(4z)} - 3 \mfrac{\eta^9(18z)}{\eta^3(9z) \eta^3(36z)}  \in M_{3/2}\left(288, \left( \mfrac{8}{\cdot} \right) \nu_{\eta}^3 \right). \label{eta_prod2}
     \end{align}
     \item The theta series corresponding to $D = 3, \chi = 1_3, N = 288$ and $\psi = \left( \mfrac{12}{\cdot} \right)$ in even case and $D = 3, \chi = \left( \mfrac{-3}{\cdot} \right), N = 16$ and $\psi = \left( \mfrac{-4}{\cdot} \right)$ in odd case has multiplier $\nu_{\theta}\nu_{\eta}^8$.
\end{enumerate}

\medskip

In a recent work, Ahlgren et. al. \cite{AAD} proved a Shimura type Correspondence for forms with the eta multiplier. 
Let $(N, 6) = 1$ and let $1 \leq r \leq 23$ with $(r, 6) = 1$. Let $k \geq 0, \: \chi \in \widehat{G}_{N}$ and $t \geq 1$ be square-free.
\begin{theorem}[Theorem 1, \cite{AAD}] \label{AAD1}
Let $f(z) = \displaystyle{\sum_{n\equiv r\spmod{24}}} a(n) q^\frac n{24} \in S_{k + 1/2}{(N, \chi \nu_{\eta}^{r})}$ where $k \geq 2$. For a square-free $t$, we define $B_{t}(n)$ by 
\[
 \sum_{n \geq 1} \frac{B_{t}(n)}{n^{s}} = L \left(s - k + 1, \chi \left( \frac{\cdot}{t} \right) \right) \sum_{n \geq 1} \left( \frac{12}{n} \right) \frac{a(t n^{2})}{n^{s}} 
 \]
 For all $n \geq 1$, we have:
\begin{equation} \label{image_24}
\displaystyle{B_{t}(n) = \sum_{d \mid n} \chi(d) \left( \frac{d}{t} \right) \left( \frac{12}{n/d} \right) d^{k - 1} a(tn^{2}/d^{2})}.
\end{equation}
Then we have $\mathcal{S}_{t}(f) = \displaystyle{\sum_{n = 1}^{\infty}} B_{t}(n) q^{n} \in S_{2 k}(6N, \chi^{2}, \epsilon_{2, r, \chi}, \epsilon_{3, r, \chi})$.
\end{theorem}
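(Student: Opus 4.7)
The plan is to reduce the statement to Shimura's classical correspondence on the theta-multiplier side via the operator $V_{24}$, then untwist by $\chi_{12} := \left(\frac{12}{\cdot}\right)$, and finally track the level and the Atkin-Lehner eigenvalues. First, set $F := f\mid V_{24} = \sum_{n\equiv r\,(24)} a(n) q^n$. Using the classical transformation of $\eta(24z)$ on $\Gamma_0(576)$ together with the parity condition $\chi(-1) = \left(\frac{-1}{r}\right)(-1)^k$, one verifies that $F$ lies in a space of half-integral weight forms with \emph{theta}-multiplier at level $576N$ and nebentypus $\chi\chi_{12}$; the upgrade of $\nu_\eta^r$ to theta-multiplier absorbs exactly the Dirichlet factor $\chi_{12}$.

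Next apply the classical Shimura lift $\mathrm{Sh}_t^{\mathrm{cl}}$ to $F$, obtaining a form $G \in S_{2k}(288N,\chi^2)$ with $n$-th coefficient
\[
C_t(n) = \sum_{d\mid n}(\chi\chi_{12})(d)\left(\frac{(-1)^k t}{d}\right)d^{k-1}\,a(tn^2/d^2).
\]
The coefficient-wise twist $G \otimes \chi_{12}$ has $n$-th coefficient $\chi_{12}(n) C_t(n)$; using $\chi_{12}(n) = \chi_{12}(d)\chi_{12}(n/d)$ when $(d,6)=1$ (forced by the support condition on the $a$'s) together with the parity on $\chi$, this simplifies to precisely $B_t(n)$ of \eqref{image_24}. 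Hence $\mathcal{S}_t(f) = G\otimes \chi_{12}$, which a priori lives at level $288N\cdot 144$; the explicit support $(n,6)=1$ of the Fourier coefficients, combined with the unique newform decomposition of Proposition \ref{decompose_newform}, forces $\mathcal{S}_t(f)$ to descend to $\Gamma_0(6N)$.

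The main obstacle will be verifying the Atkin-Lehner eigenvalues $\epsilon_{2,r,\chi}$ and $\epsilon_{3,r,\chi}$ from \eqref{eq:epdef}. For $p \in \{2,3\}$, I would compute $F\mid H_{576N}\mid W_p^{576N}$ using the explicit formulas \eqref{eq:nueta} for $\nu_\eta$ at the relevant matrix representatives, then push this computation through $\mathrm{Sh}_t^{\mathrm{cl}}$ and the untwist by $\chi_{12}$, exploiting that the classical Shimura lift intertwines the Fricke involution on the half-integer side with Fricke on the integer side. The signs $\left(\frac{8}{r/(r,3)}\right)$ and $\left(\frac{12}{r}\right)$ emerge from evaluating $\nu_\eta^r$ on the chosen $W_p$-representatives, the overall minus signs come from evaluating $\nu_\theta$ at the same matrices, and the factors $\chi(p)$ from the nebentypus. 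Once these eigenvalues are identified, Lemma \ref{new_criterion} confirms the newness conditions at primes $2$ and $3$. The hypotheses $(r,6)=1$ and $(N,6)=1$ are essential throughout: they ensure that each eta-multiplier evaluation is unambiguous and cleanly decouple the $2$- and $3$-adic computations from the level-$N$ structure.
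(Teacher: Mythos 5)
This statement is quoted from Ahlgren--Anderson--Dicks \cite{AAD} and the paper offers no proof of it, so there is no internal argument to compare against; judged on its own terms, your proposal has a genuine gap at its central step. The relation between the two lifts is $\textup{Sh}_{t}(f\mid V_{24}) = \mathcal{S}_{t}(f)\otimes\left(\frac{12}{\cdot}\right)$ (as recorded in the paper's Proposition from \S 3.4 of \cite{AAD}), and you cannot invert this by twisting back: writing $G=\textup{Sh}_t^{\mathrm{cl}}(F)$, the form $G\otimes\chi_{12}$ has $n$-th coefficient $\chi_{12}(n)^{2}B_{t}(n)$, which vanishes whenever $(n,6)>1$. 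But $B_{t}(n)$ is \emph{not} supported on $(n,6)=1$: the factor $\left(\frac{12}{n/d}\right)$ in \eqref{image_24} only forces $n/d$ coprime to $6$, while $d$ itself may be divisible by $2$ or $3$ (e.g.\ $B_{t}(2)$ contains the term $\chi(2)\left(\frac{2}{t}\right)2^{k-1}a(t)\neq 0$ in general, and concretely $\mathcal{S}_{1}(\eta g)=g(z)g(6z)-g(2z)g(3z)$ has nonzero even-index coefficients). So your identity $\mathcal{S}_{t}(f)=G\otimes\chi_{12}$ is false, and the twisting argument only pins down $\mathcal{S}_t(f)$ on the coefficients coprime to $6$.

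This is not a cosmetic issue: recovering the coefficients at $n$ with $(n,6)>1$ and showing the resulting $q$-series is modular of level $6N$ (rather than a level divisible by $576$) is precisely the hard content of the theorem, and it cannot be extracted formally from the classical correspondence at level $576N$. Your appeal to Proposition \ref{decompose_newform} to "force descent to $\Gamma_0(6N)$" is circular, since that proposition applies to forms already known to be modular of the relevant level. Likewise, the Atkin--Lehner eigenvalue computation you sketch is for $W_{p}^{576N}$ with $p\in\{2,3\}$ dividing the level to high powers; the statement concerns $W_{p}^{6N}$ with $p\parallel 6N$, which cannot even be formulated until the descent is established. The actual proof in \cite{AAD} requires a substantially different and more delicate analysis at the primes $2$ and $3$; the reduction-to-$V_{24}$-and-untwist strategy, as written, does not close.
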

\noindent
Here, $\mathcal{S}_{t}(f)$ denotes the $t$-th Shimura lift of the function $f$ with respect to eta-multiplier.
\begin{remark}
We note that $\mathcal{S}_{t}(f) = 0$ unless $t \equiv r \spmod{24}$. Furthermore, if we remove the hypothesis $(N, 6) = 1$, then $\mathcal{S}_{t}(f) \in S_{2 k}(6N, \chi^{2})$.
\end{remark}
 \begin{lemma}[Commutativity relations] \label{Comm_(r,6)=1}
Let $(r,6) = 1$. Let $f \in \mathcal{S}_{k + \frac{1}{2}} (N, \chi \nu_{\eta}^{r})$ and let $t \geq 1$ be square-free with $t \equiv r \spmod{24}$. Let $p \geq 5$ be prime.
 \begin{enumerate} [label=(\alph*)]
     \item $\mathcal{S}_{t} (f \mid T_{p^{2}}) = \left( \frac{12}{p} \right) \: \mathcal{S}_{t}(f) \mid T_{p}$. \label{eq:Comm_S1:a}
     \item $\mathcal{S}_{1}(f \mid V_{t}) = \left( \frac{12}{t} \right) \: \mathcal{S}_{t}(f) \mid V_{t}$. \label{eq:Comm_S1:b} 
     \item $\mathcal{S}_{1}(f \mid V_{p^2}) = \left( \frac{12}{p} \right) \: \mathcal{S}_{1}(f) \mid V_{p}$. \label{eq:Comm_S1:c}
     \item $\mathcal{S}_{t}(f) = \mathcal{S}_{1}(f \mid U_{t})$. \label{eq:Comm_S1:d}
 \end{enumerate}
 \end{lemma}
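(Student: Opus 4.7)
The plan is to prove all four identities by direct manipulation of the Fourier coefficient formula \eqref{image_24} for $B_t(n)$, or equivalently its Dirichlet series form
\[
\sum_{n \geq 1} \frac{B_t(n)}{n^s} = L\!\left(s - k + 1, \chi\pfrac{\cdot}{t}\right) \sum_{n \geq 1} \pfrac{12}{n} \frac{a(tn^2)}{n^s}.
\]
The bookkeeping throughout rests on \eqref{action_UV}: for $t$ square-free coprime to $6$ (which follows from $t\equiv r\spmod{24}$ with $(r,6)=1$), both $f \mid U_t$ and $f \mid V_t$ twist the character by $\pfrac{\cdot}{t}$, and since $t \equiv r \spmod{24}$ with $r^2 \equiv 1 \spmod{24}$, the resulting $\eta$-multiplier becomes $\nu_\eta$, so $\mathcal{S}_1$ is well defined on these forms.

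I would begin with \ref{eq:Comm_S1:d}, which is essentially a substitution check: plugging $f \mid U_t = \sum_m a(tm) q^{m/24}$ and the character $\chi\pfrac{\cdot}{t}$ into the definition of $\mathcal{S}_1$ reproduces $B_t(n)$ term by term. For \ref{eq:Comm_S1:b}, I write $f \mid V_t = \sum_{t \mid m} a(m/t) q^{m/24}$, so $\mathcal{S}_1(f \mid V_t)$ is supported on indices $n$ with $t \mid n^2$, which since $t$ is square-free is equivalent to $t \mid n$. The substitution $n = tm$ yields
\[
\sum_{n \geq 1} \pfrac{12}{n} \frac{(f \mid V_t)_{n^2}}{n^s} = \pfrac{12}{t} t^{-s} \sum_{m \geq 1} \pfrac{12}{m} \frac{a(tm^2)}{m^s};
\]
multiplying by the common L-factor gives \ref{eq:Comm_S1:b}. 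Part \ref{eq:Comm_S1:c} is the analogous Dirichlet series manipulation with $t=1$ and a single $V_p$ on the lift side, using that $V_{p^2}$ twists the character by $\chi_p^2$, which is trivial on $(\Z/Np^2\Z)^*$, so $\mathcal{S}_1(f\mid V_{p^2})$ is computed with character $\chi$ itself.

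Finally, \ref{eq:Comm_S1:a} is Hecke equivariance, and I expect this to be the main obstacle. The plan is to invoke the explicit formula for $T_{p^2}$ on half-integer weight forms with $\eta$-multiplier from \cite{AAD}, which decomposes $f \mid T_{p^2}$ as a linear combination of $f \mid U_{p^2}$, a twist of $f$ by $\pfrac{12\,\cdot}{p}$ scaled by $\chi(p) p^{k-1}$, and $\chi^2(p) p^{2k-1} f \mid V_{p^2}$. Applying $\mathcal{S}_t$ term by term using \ref{eq:Comm_S1:b}--\ref{eq:Comm_S1:d}, and matching against the classical decomposition $\mathcal{S}_t(f)\mid T_p = \mathcal{S}_t(f)\mid U_p + \chi^2(p) p^{2k-1}\, \mathcal{S}_t(f)\mid V_p$, yields the stated identity. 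The hardest step is tracking how the Kronecker symbol twist in the middle term of $T_{p^2}$ interacts with the factor $\pfrac{12}{\cdot}$ inside the Shimura formula so that the Euler factors at $p$ on both sides match up to precisely $\pfrac{12}{p}$.
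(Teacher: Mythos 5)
For parts \ref{eq:Comm_S1:b}, \ref{eq:Comm_S1:c}, and \ref{eq:Comm_S1:d} your argument is correct and is essentially the paper's: the paper works coefficient-by-coefficient with the formula \eqref{image_24}, reindexing the support $t\mid n$ (resp.\ $p\mid n$) via $n=tm$ (resp.\ $n=pm$) and pulling out $\pfrac{12}{t}$ (resp.\ $\pfrac{12}{p}$), which is exactly your Dirichlet-series manipulation written termwise; your observation that the two $L$-factors $L(s-k+1,\chi\pfrac{\cdot}{t})$ coincide on both sides is the same bookkeeping the paper does implicitly through \eqref{action_UV}.

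The one substantive divergence is part \ref{eq:Comm_S1:a}. The paper does not prove it at all --- it simply refers to \S5 and \S6 of \cite{AAD}, where Hecke equivariance is established as part of the construction of $\mathcal{S}_t$. Your plan to rederive it from an explicit three-term decomposition of $T_{p^2}$ is a legitimate alternative route (it is how Shimura's original equivariance proof goes, as a local Euler-factor computation), but as written it has a genuine gap: you never carry out the matching of the middle ``twist'' term of $T_{p^2}$ against $\pfrac{12}{p}\,\mathcal{S}_t(f)\mid T_p$, and you explicitly flag this as the hardest step. Moreover, applying your parts \ref{eq:Comm_S1:b}--\ref{eq:Comm_S1:d} termwise to $f\mid T_{p^2}$ does not suffice as stated, since those parts only relate $\mathcal{S}_1$ to $\mathcal{S}_t$; to handle $\mathcal{S}_t(f\mid U_{p^2})$ and $\mathcal{S}_t(f\mid V_{p^2})$ you would need the $t$-indexed analogues of \ref{eq:Comm_S1:c} and \ref{eq:Comm_S1:d} (e.g.\ $\mathcal{S}_t(f\mid V_{p^2})=\pfrac{12}{p}\mathcal{S}_t(f)\mid V_p$), which are true by the same reindexing but are not among the statements you proved. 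Either supply that local computation in full, or do what the paper does and cite \cite{AAD} for part \ref{eq:Comm_S1:a}.
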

 \begin{proof}
 For proof of part \ref{eq:Comm_S1:a}, we refer the reader to \S5 and \S6 of \cite{AAD}.
We proceed to prove part \ref{eq:Comm_S1:b}. Let $f \in S_{k + 1/2}(N, \chi \nu_{\eta}^r)$. This implies that  $f(z) = \displaystyle{\sum_{n\equiv r\spmod{24}}} a(n) q^\frac n{24}$.  Using \eqref{action_UV}, it follows that 
 \[
 f \mid V_{t} = \sum_{\substack{m \equiv 1 \spmod{24} \\ t \mid m}} b_{t}(m) q^{\frac{m}{24}} \in S_{k + 1/2}\left( Nt, \left( \frac{\cdot}{t} \right) \chi \nu_{\eta} \right) \:\: \text{where} \:\: b_{t}(m) = a\left( \frac{m}{t} \right). 
\]
We compute
\begin{align}
 \Sh_{1}(f \mid V_{t}) = \Sh_{1}\left( \sum_{\substack{m \equiv 1 \spmod{24} \\ t \mid m}} b_{t}(m) q^{\frac{m}{24}} \right) &= \sum_{n} \left( \sum_{d \mid n} \chi(d) d^{k - 1} \left( \frac{d}{t} \right) \left( \frac{12}{n/d} \right) \: b_{t}\left(\frac{n^{2}}{d^{2}}\right) \right) q^{n}, \\
 &= \sum_{n, \:t \mid n} \left( \sum_{d \mid \frac{n}{t}} \chi(d) d^{k - 1} \left( \frac{d}{t} \right) \left( \frac{12}{n/d} \right) \: a\left(\frac{n^{2}}{td^{2}}\right) \right) q^{n}. \label{LHS}
\end{align}
On the other hand, 
\begin{align}
\left( \frac{12}{t} \right) \: \mathcal{S}_{t}(f) \mid V_{t} &= \sum_n \left( \sum_{d \mid n} \chi(d) d^{k - 1} \left( \frac{d}{t} \right) \left( \frac{12}{t} \right) \left( \frac{12}{n/d} \right) \: a\left(\frac{tn^{2}}{d^{2}}\right) \right) q^{tn}, \:\: \text{using \eqref{image_24}} \\
&= \sum_{s, \: t \mid s} \left( \sum_{d \mid \frac{s}{t}} \chi(d) d^{k - 1} \left( \frac{d}{t} \right) \left( \frac{12}{s/d} \right) \: a\left(\frac{s^{2}}{td^{2}}\right) \right) q^{s}, \:\: \text{by plugging in} \:\: s = tn. \label{RHS}
\end{align}
Comparing \eqref{LHS} and \eqref{RHS}, we obtain the desired result.

\medskip

We next prove part \ref{eq:Comm_S1:c}. Let $f \in S_{k + 1/2}(N, \chi \nu_{\eta})$ and let $p \geq 5$ prime. This implies that  $f(z) = \displaystyle{\sum_{n\equiv 1 \spmod{24}}} a(n) q^\frac n{24}$.  Using \eqref{action_UV}, it follows that 
 \[
 f \mid V_{p^{2}} = \sum_{\substack{m \equiv 1 \spmod{24} \\ p^2 \mid m}} b_{p^2}(m) q^{\frac{m}{24}} \in S_{k + 1/2}\left( Np^2, \chi \nu_{\eta} \right) \:\: \text{where} \:\: b_{p^2}(m) = a\left( \frac{m}{p^2} \right). 
\]
We compute
\begin{align*}
 \Sh_{1}(f \mid V_{p^2}) = \Sh_{1}\left( \sum_{\substack{m \equiv 1 \spmod{24} \\ p^2 \mid m}} b_{p^2}(m) q^{\frac{m}{24}} \right) &= \sum_{n} \left( \sum_{d \mid n} \chi(d) d^{k - 1} \left( \frac{12}{n/d} \right) \: b_{p^2}\left( \frac{n^2}{d^2} \right) \right) q^{n}, \\
 &= \sum_{n, \:p \mid n} \left( \sum_{d \mid \frac{n}{p}} \chi(d) d^{k - 1} \left( \frac{12}{n/d} \right) \: a\left(\frac{n^{2}}{p^2 d^{2}}\right) \right) q^{n}, \\
 &= \sum_{m} \left( \sum_{d \mid m} \chi(d) d^{k - 1} \left( \frac{12}{mp/d} \right) \: 
 a\left(\frac{m^{2}}{d^{2}}\right) \right) q^{pm}, \:\: \text{plug in} \:\: n = m p.\\
 &= \left( \frac{12}{p} \right) \: \Sh_{1}(f) \mid V_{p}.
\end{align*}
The proof of part \ref{eq:Comm_S1:d} is obtained using similar approach as above.
 \end{proof}
 \noindent
 An analogous version of Theorem \ref{AAD1} exists for $(r, 6) = 3$.
 \begin{theorem}[Theorem 2, \cite{AAD}]\label{AAD2}
Let $f(z) = \displaystyle{\sum_{n\equiv r/3 \spmod{8}}} b(n) q^\frac n{8} \in S_{k + 1/2}{(N, \chi \nu_{\eta}^{r})}$ where $k \geq 2$ and $N \geq 1$ odd. For a square-free $t$, we define $B_{t}(n)$ by 
\[
 \sum_{n \geq 1} \frac{B_{t}(n)}{n^{s}} = L \left(s - k + 1, \chi \left( \frac{\cdot}{t} \right) \right) \sum_{n \geq 1} \left( \frac{-4}{n} \right) \frac{a(t n^{2})}{n^{s}}. 
 \]
 For all $n \geq 1$, we have:
\begin{equation} \label{image_8}
\displaystyle{B_{t}(n) = \sum_{d \mid n} \chi(d) \left( \frac{d}{t} \right) \left( \frac{-4}{n/d} \right) d^{k - 1} a(tn^{2}/d^{2})}.
\end{equation}
Then we have $\mathcal{S}_{t}(f) = \displaystyle{\sum_{n = 1}^{\infty}} B_{t}(n) q^{n} \in S_{2 k}(2N, \chi^{2}, \epsilon_{2, r, \chi})$.
\end{theorem}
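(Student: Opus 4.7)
The plan is to mirror the proof of Theorem \ref{AAD1} (namely \cite[Theorem 1]{AAD}), adapted to the $(r,6)=3$ regime.  Writing $r=3r'$ with $r'\in\{1,3,5,7\}$ odd and coprime to $6$, the shift from $q^{n/24}$ with $n\equiv r\pmod{24}$ in Theorem \ref{AAD1} to $q^{n/8}$ with $n\equiv r/3\pmod{8}$ here reflects that $\nu_{\eta}^{3r'}$ has order dividing $8$ rather than $24$; correspondingly, the Kronecker character $\bigl(\tfrac{12}{\cdot}\bigr)$ appearing in the $(r,6)=1$ lift is replaced by $\bigl(\tfrac{-4}{\cdot}\bigr)$, since the natural theta-companion is now $\eta(z)^{3} = \sum_{n\ge 1}\bigl(\tfrac{-4}{n}\bigr)\,n\,q^{n^2/8}$ from Lemma \ref{theta_series} and Table \ref{tab:eta_odd}, rather than $\eta(z)$.

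First I would verify, by direct Dirichlet series manipulation, that the coefficients
\[
B_t(n) = \sum_{d\mid n}\chi(d)\bigl(\tfrac{d}{t}\bigr)\bigl(\tfrac{-4}{n/d}\bigr)\,d^{k-1}\,a(tn^2/d^2)
\]
define a well-formed formal $q$-series and satisfy the multiplicativity relations required of a weight-$2k$ Hecke-compatible form; this argument runs exactly parallel to the one carried out for $(r,6)=1$ in \cite{AAD}.  The modularity of $\mathcal{S}_t(f)$ on $\Gamma_0(2N)$ would then be established by a Niwa/Shintani theta-kernel computation, pairing $f$ against a weight-$3/2$ kernel built from $\eta(z)^3$ in place of $\eta(z)$, and recognizing the resulting Rankin-Selberg integral as a weight-$2k$ form on $\Gamma_0(2N)$ with nebentypus $\chi^2$.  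The Atkin-Lehner eigenvalue $\epsilon_{2,r,\chi}=-\chi(2)\bigl(\tfrac{8}{r'}\bigr)$ at the prime $2$ would emerge from conjugating the kernel by $W_2^{2N}$ and tracking the transformation of $\eta^3$: the factor $-\chi(2)$ comes from the nebentypus of $f$, and $\bigl(\tfrac{8}{r'}\bigr)$ from the explicit formula \eqref{eq:nueta} for $\nu_{\eta}$ evaluated on the Atkin-Lehner matrix.  Cuspidality is then automatic from $B_t(0)=0$.

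Reducing Theorem \ref{AAD2} directly to Theorem \ref{AAD1} is tempting but awkward: multiplying $f$ by an eta-quotient of multiplier $\nu_{\eta}^{m}$ with $6\mid m$ keeps $f$ in the $(r,6)=3$ regime, while multiplication by $\eta^{\pm 1}$ alters weight and level in a way that forces a descent from level $6N$ to $2N$ via averaging over $W_3^{6N}$, and by \eqref{eq:epdef} the datum $\epsilon_{3,r,\chi}$ is precisely what degenerates when $3\mid r$, so this averaging would require its own bookkeeping.  The main technical obstacle is therefore the modularity step of the direct approach: one must track how $\nu_{\eta}^{3r'}$ interacts with $V_2$, $U_2$, and $W_2^{2N}$ (via Lemma \ref{action_U_V}) throughout the kernel computation, since this is where all the combinatorial care lies and where the precise sign $\epsilon_{2,r,\chi}$ is pinned down.
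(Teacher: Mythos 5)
The first thing to note is that the paper does not prove Theorem \ref{AAD2} at all: it is imported verbatim as Theorem 2 of \cite{AAD}, just as Theorem \ref{AAD1} is imported as Theorem 1 of \cite{AAD}, so there is no in-paper argument to measure your proposal against. Your outline is consistent with how the result is actually established in \cite{AAD} --- a Niwa--Shintani--Cipra style theta-kernel/Rankin--Selberg construction seeded by the weight $3/2$ form $\eta^3(z)=\sum_{n\ge 1}\bigl(\tfrac{-4}{n}\bigr)n\,q^{n^2/8}$, followed by trace and Atkin--Lehner computations that cut the level down to $2N$ and identify the eigenvalue $\epsilon_{2,r,\chi}$ of \eqref{eq:epdef} --- and you correctly identify that the character $\bigl(\tfrac{12}{\cdot}\bigr)$ must be replaced by $\bigl(\tfrac{-4}{\cdot}\bigr)$ and that the datum $\epsilon_{3,r,\chi}$ is what disappears when $3\mid r$.

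That said, as a proof the proposal has genuine gaps rather than omitted routine details. The two steps that carry all the content --- (i) the kernel computation showing that the transform lands in weight $2k$ on $\Gamma_0(2N)$ with nebentypus $\chi^2$ and produces exactly the coefficients \eqref{image_8}, and (ii) the conjugation by $W_2^{2N}$ yielding the eigenvalue $-\chi(2)\bigl(\tfrac{8}{r/3}\bigr)$ --- are asserted to ``emerge'' but never carried out, and these are precisely the places where the interaction of $\nu_\eta^{r}$ with the relevant matrices must be computed from \eqref{eq:nueta}. In addition, the claim that ``cuspidality is then automatic from $B_t(0)=0$'' is not correct: vanishing of the constant term at $\infty$ says nothing about the other cusps of $\Gamma_0(2N)$, and in the kernel approach cuspidality has to be extracted from the decay of the Rankin--Selberg integral (equivalently, from the cuspidality of $f$ at every cusp), which needs its own argument. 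So the proposal is a sound plan that matches the strategy of the cited source, but it is a plan, not a proof.
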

\begin{remark}
We note that $\mathcal{S}_{t}(f) = 0$ unless $t \equiv r/3 \spmod{8}$. Furthermore, if we remove the hypothesis $N$ is odd, then $\mathcal{S}_{t}(f) \in S_{2 k}(2N, \chi^{2})$.
\end{remark}
 \begin{lemma}[Commutativity relations] \label{Comm_r/3}
 Let $(r,6) = 3$. Let $f \in \mathcal{S}_{k + \frac{1}{2}} (N, \chi \nu_{\eta}^{r})$ and let $t \geq 1$ be square-free with $t \equiv r/3 \spmod{8}$. Let $p \geq 3$ be prime.
 \begin{enumerate}[label=(\alph*), ref=\thelemma.\alph*]
  \item $\mathcal{S}_{t} (f \mid T_{p^{2}}) = \left( \frac{-4}{p} \right) \: \mathcal{S}_{t}(f) \mid T_{p}$. \label{eq:Comm_r/3:a}
     \item $\mathcal{S}_{1}(f \mid V_{t}) = \left( \frac{-4}{t} \right) \: \mathcal{S}_{t}(f) \mid V_{t}$. \label{eq:Comm_r/3:b}
     \item $\mathcal{S}_{1}(f \mid V_{p^2}) = \left( \frac{-4}{p} \right) \: \mathcal{S}_{1}(f) \mid V_{p}$. \label{eq:Comm_r/3:c}
     \item $\mathcal{S}_{t}(f) = \mathcal{S}_{1}(f \mid U_{t})$. \label{eq:Comm_r/3:d}
 \end{enumerate}
 \end{lemma}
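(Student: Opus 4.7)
\smallskip

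The plan is to mimic the proof of Lemma \ref{Comm_(r,6)=1} almost verbatim, replacing the Kronecker symbol $\left(\mfrac{12}{\cdot}\right)$ by $\left(\mfrac{-4}{\cdot}\right)$, replacing the $q$-expansion convention $q^{n/24}$ by $q^{n/8}$ (since $(r,6)=3$ forces $r/3 \pmod 8$ to be the relevant residue), and using the coefficient formula \eqref{image_8} in Theorem \ref{AAD2} in place of \eqref{image_24} in Theorem \ref{AAD1}. Part \ref{eq:Comm_r/3:a} is established in \S5--\S6 of \cite{AAD} as part of the construction of $\mathcal{S}_t$, so nothing new is needed there.

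For part \ref{eq:Comm_r/3:b}, I would write $f = \sum_{n \equiv r/3\,(8)} a(n) q^{n/8}$, and use the action of $V_t$ on spaces with the eta-multiplier, namely \eqref{action_UV} iterated over the prime factors of the square-free $t$, to conclude that $f \mid V_t \in S_{k+1/2}(Nt, \chi(\cdot/t) \nu_\eta^r)$ with $q$-expansion $\sum_{t \mid m,\, m \equiv r/3\,(8)} a(m/t) q^{m/8}$. Substituting into \eqref{image_8} (with character $\chi(\cdot/t)$ and index $t=1$) gives
\[
\mathcal{S}_1(f \mid V_t)(n) = \sum_{d \mid n,\, td \mid n} \chi(d) \pmfrac{d}{t}\pmfrac{-4}{n/d} d^{k-1}\, a\!\left(\tfrac{n^2}{t d^2}\right).
\]
On the other hand, substituting $n = tm$ in \eqref{image_8} for $\mathcal{S}_t(f)$ and multiplying by $\left(\mfrac{-4}{t}\right)$ yields, after absorbing $\left(\mfrac{-4}{t}\right)\left(\mfrac{-4}{m/d}\right) = \left(\mfrac{-4}{tm/d}\right) = \left(\mfrac{-4}{n/d}\right)$, the same expression under the reindexing $d \mid m \Leftrightarrow d \mid n/t$. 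Part \ref{eq:Comm_r/3:c} follows by the same bookkeeping: since $(\cdot/p^2)$ is trivial, $f\mid V_{p^2}$ retains character $\chi$, and the factor $\left(\mfrac{-4}{p}\right)$ arises from the multiplicativity $\left(\mfrac{-4}{pm/d}\right) = \left(\mfrac{-4}{p}\right)\left(\mfrac{-4}{m/d}\right)$ when matching $\mathcal{S}_1(f \mid V_{p^2})$ to $\left(\mfrac{-4}{p}\right)\mathcal{S}_1(f) \mid V_p$.

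For part \ref{eq:Comm_r/3:d}, the point is that by \eqref{action_UV}, $f \mid U_t \in S_{k+1/2}(N', \chi(\cdot/t) \nu_\eta^{rt})$ for an appropriate level $N'$, and since $t \equiv r/3\pmod 8$, we have $rt \equiv r^2/3 \equiv r/3 \pmod 8$ (using $r^2 \equiv 1 \pmod 8$), so the character theory is consistent. The $q$-expansion of $f \mid U_t$ has coefficient $a(tn)$ at $q^{n/8}$, hence \eqref{image_8} applied to $f \mid U_t$ with index $1$ and character $\chi(\cdot/t)$ gives exactly $\sum_d \chi(d)(d/t)(-4/(n/d)) d^{k-1} a(tn^2/d^2) = \mathcal{S}_t(f)(n)$.

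I do not foresee a serious obstacle; the entire argument is bookkeeping with Kronecker symbols and reindexing sums, and the only subtlety is correctly tracking how the nebentypus of $f$ changes under $U_m$ and $V_m$ via \eqref{action_UV}, which ensures that the formula in Theorem \ref{AAD2} can be applied to the transformed form with the correct character. The slight care needed for composite square-free $t$ (iterating over primes) and for the residue class $r/3 \pmod 8$ (which uses $r^2 \equiv 1 \pmod 8$ when $(r,6)=3$) is purely elementary.
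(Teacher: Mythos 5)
Your proposal coincides with the paper's: the paper's entire proof of this lemma is the one-line remark that it ``follows a similar approach as in Lemma \ref{Comm_(r,6)=1}'', and your transcription --- replace $\pmfrac{12}{\cdot}$ by $\pmfrac{-4}{\cdot}$ and $q^{n/24}$ by $q^{n/8}$, use \eqref{image_8} in place of \eqref{image_24}, and cite \S5--\S6 of \cite{AAD} for part (a) --- is exactly that argument carried out. The only blemish is the congruence in part (d): the condition you actually need is $(rt)/3 \equiv (r/3)^2 \equiv 1 \pmod{8}$ so that $\mathcal{S}_1$ applies to $f \mid U_t$, whereas the chain $rt \equiv r^2/3 \equiv r/3 \pmod 8$ as written fails already for $r=3$, $t=1$; this is a typo-level slip that does not affect the argument.
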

We note that the proof of Lemma \ref{Comm_r/3} follows a similar approach as in Lemma \ref{Comm_(r,6)=1}.
\medskip

 Next, we state some important facts on modular forms that are eigenforms for $T_{p^{2}}$ for all primes $p$.

 \begin{proposition} 
     Let $t, N \geq 1$ with $t$-square free, let $k \geq 2$, and let $(r, 6) = 1$. Suppose, for all primes $p$, that $\displaystyle{f(z) = \sum_{\substack{n \geq 1 \\ n \equiv r \spmod{24}}} a(n)q^{\frac{n}{24}} \in {S}_{k + 1/2}(N, \chi \nu_{\eta}^{r})}$ is an eigenform for $T_{p^{2}}$ with eigenvalue $\gamma_{p}$. Then, for all square-free $t_{1},t_{2} \geq 1$, we have
     \[
     a(t_{1}) \: \mathcal{S}_{t_{2}}(f) = a(t_{2}) \: \mathcal{S}_{t_{1}}(f).
     \]
\end{proposition}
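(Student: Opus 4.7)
The plan is to reduce the identity to the fact that a simultaneous Hecke eigenform whose first Fourier coefficient vanishes must be zero. I would set $h := a(t_2)\,\mathcal{S}_{t_1}(f) - a(t_1)\,\mathcal{S}_{t_2}(f)$ and aim to show $h = 0$ by exhibiting $h$ as an eigenform for every $T_n$ with $h_1 = 0$.

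First, I would dispose of the degenerate cases. The coefficient $a(n)$ vanishes unless $n \equiv r \pmod{24}$, and by the remark following Theorem \ref{AAD1}, $\mathcal{S}_t(f) = 0$ unless $t \equiv r \pmod{24}$. So if either $t_i$ fails to satisfy $t_i \equiv r \pmod{24}$, both sides of the asserted identity vanish trivially. We may therefore assume $t_1 \equiv t_2 \equiv r \pmod{24}$.

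Write $g_t := \mathcal{S}_t(f) \in S_{2k}(6N, \chi^2)$. By Lemma \ref{Comm_(r,6)=1}(a) applied to $f \mid T_{p^2} = \gamma_p f$, for every prime $p \geq 5$ we have $g_t \mid T_p = \left(\tfrac{12}{p}\right)\gamma_p\, g_t$, with an eigenvalue independent of $t$. For the bad primes $p \in \{2,3\}$, I would compute $U_p$ directly from the explicit formula
\[
B_t(n) = \sum_{dm = n} \chi(d)\left(\tfrac{d}{t}\right)\left(\tfrac{12}{m}\right) d^{k-1} a(tm^2).
\]
Since $(12/m)$ vanishes unless $\gcd(m,6)=1$, replacing $n$ by $pn$ for $p \in \{2,3\}$ forces $p \mid d$, and factoring out the prefactor yields
\[
g_t \mid U_2 = \chi(2)\left(\tfrac{2}{t}\right) 2^{k-1} g_t, \qquad g_t \mid U_3 = \chi(3)\left(\tfrac{3}{t}\right) 3^{k-1} g_t.
\]
The Kronecker symbols $(2/t)$ and $(3/t)$ depend only on $t$ modulo $24$, so these eigenvalues are common to $g_{t_1}$ and $g_{t_2}$. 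Hence $g_{t_1}$ and $g_{t_2}$ are simultaneous eigenforms for every $T_p$ with the same system of eigenvalues, and via the standard Hecke recursion and multiplicativity $T_{mn} = T_m T_n$ for $(m,n)=1$, for every $T_n$ with a common eigenvalue $\lambda_n$.

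Finally, $h$ is a linear combination of two $T_n$-eigenforms with matching eigenvalues, hence itself an eigenform for every $T_n$ with eigenvalue $\lambda_n$. Its first Fourier coefficient equals $a(t_2)B_{t_1}(1) - a(t_1)B_{t_2}(1) = a(t_2)a(t_1) - a(t_1)a(t_2) = 0$, using $B_t(1) = a(t)$. The standard identity $(h \mid T_n)_1 = h_n$ (only the $d=1$ term of the Hecke sum survives when $m=1$), combined with $h \mid T_n = \lambda_n h$, forces $h_n = \lambda_n h_1 = 0$ for all $n \geq 1$. Thus $h = 0$, which is the asserted identity. The main obstacle is the eigenvalue matching at the primes $2$ and $3$, which is not provided by Lemma \ref{Comm_(r,6)=1}; once this is confirmed by the direct Kronecker-symbol computation above, strong multiplicity one closes the argument.
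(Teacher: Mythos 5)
Your argument is correct and complete; note that the paper itself states this proposition without proof, so there is no in-text argument to compare against. Your route is the classical one (proportionality of Shimura lifts of an eigenform via the vanishing of the first Fourier coefficient of a simultaneous eigenform), and the one place where genuine work is needed is exactly the place you identify: Lemma \ref{Comm_(r,6)=1}\ref{eq:Comm_S1:a} only matches the eigenvalues of $\mathcal{S}_{t_1}(f)$ and $\mathcal{S}_{t_2}(f)$ at primes $p\geq 5$, so the primes $2$ and $3$ dividing the level $6N$ must be handled separately. Your direct computation from \eqref{image_24} is right: the factor $\left(\frac{12}{n/d}\right)$ forces $v_p(d)=v_p(pn)$ in the sum for $B_t(pn)$ when $p\in\{2,3\}$, giving $\mathcal{S}_t(f)\mid U_p=\chi(p)\left(\frac{p}{t}\right)p^{k-1}\,\mathcal{S}_t(f)$, and since $t_1\equiv t_2\pmod{24}$ (after discarding the degenerate cases, which you correctly observe make both sides vanish) the Jacobi symbols $\left(\frac{2}{t_i}\right)$ and $\left(\frac{3}{t_i}\right)$ agree, as they depend only on $t_i$ modulo $8$ and modulo $12$ respectively. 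With matching eigenvalues for all $T_n$ and $B_t(1)=a(t)$, the relation $h_n=(h\mid T_n)_1=\lambda_n h_1=0$ finishes the proof. This is in fact slightly stronger than what a bare appeal to ``multiplicity one'' would give, since multiplicity one can fail for the half-integral weight source spaces; the point is that the argument takes place entirely on the integral-weight side, where the first-coefficient trick is unconditional.
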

\noindent
We note that analogous result holds in case of $(r,6) = 3$.

\noindent
The following result provides a connection between the lifts $\textup{Sh}_{t}(f)$ and $\mathcal{S}_{t}(f)$.
\begin{proposition}[\S 3.4, \cite{AAD}] 
Let $f \in S_{k + 1/2}(N, \chi \nu_{\eta}^{r})$ and $t \geq 1$ be a square-free integer.
\begin{enumerate}
\item Let $(r,6) = 1$. 
\[
\textup{Sh}_{t}(f(24z)) = \mathcal{S}_{t}(f) \otimes \left( \frac{12}{\cdot} \right).
\]
\item Let $(r,6) = 3$. 
\[
\textup{Sh}_{t}(f(8z)) = \mathcal{S}_{t}(f) \otimes \left( \frac{-4}{\cdot} \right).
\]
\end{enumerate}
\end{proposition}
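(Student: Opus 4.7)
The plan is to establish both identities by a direct comparison of Fourier coefficients, since both $\textup{Sh}_t$ and $\mathcal{S}_t$ are defined by explicit Dirichlet-convolution formulas.

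For part (1), I would first identify the space in which $f(24z)$ lives. Writing $f(z) = \sum_{n\equiv r\,(24)} a(n) q^{n/24}$, the substitution $z \mapsto 24z$ sends $f$ to a form with theta-multiplier on a level divisible by $576N$, with a modified character. Specifically, one checks from \eqref{eq:nueta} and the standard theta-multiplier formula that $f(24z) \in S_{k+1/2}(576N, \chi' \nu_\theta^{2k+1})$ for a character $\chi'$ that differs from $\chi$ by a factor of $\chi_{12}$. With this identification, the classical Shimura formula gives
\[
\textup{Sh}_t(f(24z)) = \sum_{n\geq 1} \Biggl(\sum_{d\mid n} \chi'(d)\pfrac{-1}{d}^{k}\pfrac{t}{d} d^{k-1} a(tn^2/d^2)\Biggr) q^n.
\]

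The coefficient of $q^n$ in $\mathcal{S}_t(f)\otimes \chi_{12}$ is $\chi_{12}(n) B_t(n)$. Since $\chi_{12}$ is multiplicative and real-valued, for $d\mid n$ we have $\chi_{12}(n) = \chi_{12}(d)\chi_{12}(n/d)$, so by \eqref{image_24}
\[
\chi_{12}(n) B_t(n) = \sum_{d\mid n} \chi(d) \pfrac{12}{d}\pfrac{d}{t} d^{k-1} a(tn^2/d^2).
\]
The proof then reduces to checking the pointwise identity $\chi'(d)\pfrac{-1}{d}^k \pfrac{t}{d} = \chi(d)\pfrac{12}{d}\pfrac{d}{t}$ for each $d$ contributing a nonzero term. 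The nonzero contributions force $tn^2/d^2 \equiv r \pmod{24}$, which together with $(r,6) = (t,6) = 1$ implies $(d,6)=1$; on such $d$ the comparison is a routine application of quadratic reciprocity and the explicit form of $\chi'$ obtained from the shift. This yields the claimed equality.

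Part (2) is handled in exactly the same way, but using the shift $z \mapsto 8z$ (which is the natural denominator when $(r,6) = 3$), Theorem \ref{AAD2} in place of Theorem \ref{AAD1}, the twist by $\chi_{-4}$ in place of $\chi_{12}$, and the factor $\pfrac{-4}{n/d}$ from \eqref{image_8} in place of $\pfrac{12}{n/d}$. The character $\chi'$ of $f(8z)$ is again determined by a standard computation with \eqref{eq:nueta}, and the Fourier coefficient identity follows by the same manipulation. The only mildly delicate point is keeping track of the precise $2$-part of the character after the shift, which requires care but no new ideas.
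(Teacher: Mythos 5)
The paper does not prove this proposition; it is quoted verbatim from \S 3.4 of \cite{AAD}, where it is established exactly as you propose: by comparing the explicit Dirichlet-convolution formulas for $A_t(n)$ and $B_t(n)$ termwise, after identifying $f(24z)$ (resp.\ $f(8z)$) as a form with theta multiplier and computing its nebentypus. Your outline is correct, including the observation that nonvanishing of $a(tn^2/d^2)$ forces $(n/d,6)=1$ so that $\left(\frac{12}{n}\right)\left(\frac{12}{n/d}\right)$ collapses to $\left(\frac{12}{d}\right)$ on the surviving terms. The one imprecision is the claim that $\chi'$ differs from $\chi$ by $\chi_{12}$ alone: the shift also introduces a power of $\left(\frac{-4}{\cdot}\right)$ depending on $k$ and $r\bmod 4$, and it is exactly this factor that cancels against the sign $(-1)^{\frac{d-1}{2}\frac{t-1}{2}}$ arising when quadratic reciprocity converts $\left(\frac{t}{d}\right)$ into $\left(\frac{d}{t}\right)$ (using $t\equiv r\pmod{24}$); your ``routine'' final check is where this bookkeeping must be done, but it does close up.
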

\section{Proofs}
\subsection{Proof of Theorem \ref{first_Shimura_image}}
We prove part (1.e) of Theorem \ref{first_Shimura_image} and the proof of remaining cases follow a similar approach.

\medskip

Let $1 \leq r \leq 23$ with $(r, 6) = 1$. Let $\lambda \geq 2$ and $N \geq 1$ be integers. Let $t \geq 1$ be a square-free integer and let $\chi \in \hat{G}_{N}$. We consider $g(z) = \displaystyle{\sum_{m \geq 0}} a(m) q^{m} \in M_{k}(N, \chi)$ to be a normalized Hecke eigenform for all $T_p$. Using part (1.a) of Lemma \ref{theta_series}, it follows that
\begin{equation}
\mfrac{\eta^3(2z)}{\eta(z) \eta(4z)} g(z) =  \sum_{n \geq 1} \sum_{m \geq 0} \left( \frac{24}{n} \right) \: a(m) \: q^{m + \mfrac{n^2}{24}} \in M_{k + 1/2}\left(\frac{32N}{(N, 32)}, \chi \left(  \frac{8}{\cdot} \right) \nu_{\eta}  \right).   
\end{equation}
Plugging in $\mfrac{t}{24} = m + \frac{n^2}{24}$ implies that $t \equiv 1 \spmod{24}$. Therefore, we have
\begin{equation}
\mfrac{\eta^3(2z)}{\eta(z) \eta(4z)} g(z) = \sum_{\substack{t \geq 1 \\ t \equiv 1 \spmod{24}}} \left( \sum_{n \geq 1} \left(\frac{24}{n} \right) a \left( \frac{t - n^2}{24} \right)  \right)   \: q^{\frac{t}{24}} = \sum_{\substack{t \geq 1 \\ t \equiv 1 \spmod{24}}} b(t) \: q^{\frac{t}{24}}, \:\: \text{where} 
 \end{equation}
 \begin{equation}
\begin{aligned}
  b(t) = 
  \begin{dcases}
   \sum_{n \geq 1} \left(\frac{24}{n} \right) a \left( \frac{t - n^2}{24} \right), & \text{$t \equiv 1 \spmod{24}$}
   \\
    0,  & \text{$t \not\equiv 1 \spmod{24}$}.
\end{dcases}
\end{aligned}
\end{equation}
Using Theorem \ref{AAD1}, we have
\begin{equation}
\mathcal{S}_{1}\left(\frac{\eta^3(2z)}{\eta(z) \eta(4z)} g(z)\right) = \mathcal{S}_{1}\left( \sum_{\substack{m \geq 1 \\ m \equiv 1 \spmod{24}}} b(m) q^{\frac{m}{24}} \right)   = \sum_{n \geq 1} B_{1}(n) q^{n},
\end{equation}
where \eqref{image_24} gives
\begin{align}
B_{1}(n) &= \sum_{d \mid n} \chi(d) \left( \frac{8}{d} \right) \left( \mfrac{12}{n/d} \right) d^{k - 1} b \left( \mfrac{n^2}{d^2}  \right),   \\
&= \sum_{d \mid n} \chi(d) \left( \frac{8}{d} \right) \left( \mfrac{12}{n/d} \right) d^{k - 1} \sum_{m \geq 1} \left(  \left(  \mfrac{24}{m} \right) a \left( \mfrac{\mfrac{n^2}{d^2} - m^2}{24}  \right) \right), \\
&= \sum_{d \mid n} \chi(d) \left( \frac{8}{d} \right) \left( \mfrac{12}{n/d} \right) d^{k - 1} \sum_{m \geq 1}  \left(  \mfrac{24}{m} \right) a(n, d, m) \label{exp_A1}, 
\end{align}
where we have written $a(n, d, m):= a \left(\mfrac{\mfrac{n^2}{d^2} - m^2}{24}\right)$ for brevity. We now require the following lemma:
\begin{lemma}\label{simplify_sum}
With notations and definitions as above, we have    
\begin{align}
 \sum_{m \geq 1}  \left(  \mfrac{24}{m} \right) a(n, d, m) = \left( \mfrac{24}{n/d}  \right) \left( \sum_{\substack{r \geq 0 \\ r \equiv 0 \spmod{2} \\ d \mid r}} a\left( \mfrac{r(n - 6r)}{d^2} \right) - \sum_{\substack{s \geq 0 \\ s \equiv 1 \spmod{2} \\ d \mid s}} a\left( \mfrac{s(n - 6s)}{d^2} \right) \right) \\
 + \left( \mfrac{24}{n/d}  \right) \left( \mfrac{-4}{n}  \right) \left( \sum_{\substack{t \geq 0 \\ t \equiv 1 \spmod{4} \\ d \mid t}} a\left( \mfrac{t \left(\mfrac{n - 3t}{2}\right)}{d^2} \right)  - \sum_{\substack{u \geq 0 \\ u \equiv 3 \spmod{4} \\ d \mid u}} a\left( \mfrac{u\left(\mfrac{n - 3u}{2}\right)}{d^2} \right)  \right).
\end{align}
\end{lemma}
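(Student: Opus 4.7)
\emph{Proof plan.}  Set $N = n/d$.  Both sides depend on $(n, d)$ only through $N$ and $d$, and both vanish unless $\gcd(N, 6) = 1$: on the right $\pfrac{24}{N} = 0$ otherwise, and on the left every $m$ with $\gcd(m, 24) = 1$ satisfies $m^{2} \equiv 1 \spmod{24}$, so the condition $24 \mid N^{2} - m^{2}$ forces $\gcd(N, 6) = 1$.  One may also restrict to $d$ odd, since the factor $\pfrac{8}{d}$ in the outer expression for $B_{1}(n)$ kills the even-$d$ contribution.

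Assuming $\gcd(N, 6) = 1$ and $d$ odd, the plan is to split the sum by the residue of $m \spmod{24}$.  The eight classes coprime to $24$ pair up under $m \mapsto -m$ (both $\pfrac{24}{m}$ and $a((N^{2} - m^{2})/24)$ are even in $m$) into the four pairs $\{\pm N\}, \{\pm(N + 12)\}, \{\pm(N - 6)\}, \{\pm(N + 6)\} \spmod{24}$.  The first two pairs are parameterized by $m = |N - 12R|$ with $R \geq 0$ even, respectively odd; the last two by $m = |N - 6T|$ with $T \geq 0$ odd and $T \equiv 1$, respectively $T \equiv 3 \spmod{4}$.  A direct computation gives
\[
\frac{N^{2} - (N - 12R)^{2}}{24} = R(N - 6R), \qquad \frac{N^{2} - (N - 6T)^{2}}{24} = \frac{T(N - 3T)}{2},
\]
matching the two kinds of arguments on the right.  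Setting $r = dR$, $s = dS$, $t = dT$, $u = dU$ absorbs the divisibility $d \mid r, s, t, u$ into the integrality of $R, S, T, U$ and rewrites the arguments as $r(n - 6r)/d^{2}$ and $t(n - 3t)/(2d^{2})$.

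A case analysis on $N \spmod{24}$ (equivalently, the decomposition $\pfrac{24}{m} = \pfrac{12}{m}\pfrac{2}{m}$) yields the Kronecker-symbol identities
\[
\pfrac{24}{N + 12} = -\pfrac{24}{N}, \qquad \pfrac{24}{N - 6} = \pfrac{24}{N}\pfrac{-4}{N}, \qquad \pfrac{24}{N + 6} = -\pfrac{24}{N}\pfrac{-4}{N},
\]
producing the coefficients $\pfrac{24}{N}$, $-\pfrac{24}{N}$, $\pm\pfrac{24}{N}\pfrac{-4}{N}$ attached to the four pairs.  The subtle point is the translation from $T$ to $t = dT$: for $d$ odd, the condition $t \equiv 1 \spmod{4}$ forces $T \equiv d^{-1} \spmod{4}$, so for $d \equiv 1 \spmod{4}$ Sum $3$ indexes $\{\pm(N - 6)\}$, whereas for $d \equiv 3 \spmod{4}$ it indexes $\{\pm(N + 6)\}$, the roles of Sums $3$ and $4$ being swapped.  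This swap is exactly the sign $\pfrac{-4}{d}$, and the identity $\pfrac{-4}{n} = \pfrac{-4}{d}\pfrac{-4}{N}$ then converts the natural coefficients $\pm\pfrac{24}{N}\pfrac{-4}{N}$ into $\pm\pfrac{24}{N}\pfrac{-4}{n}$ as stated.  The main obstacle is the bookkeeping of these Kronecker-symbol identities together with the $d \spmod{4}$ swap; the remainder is straightforward substitution.
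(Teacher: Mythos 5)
Your proposal is correct and follows essentially the same route as the paper's proof: the same splitting of the sum over $m$ into the four residue pairs $\{\pm N\}$, $\{\pm(N+12)\}$, $\{\pm(N-6)\}$, $\{\pm(N+6)\}$ modulo $24$, the same parameterizations and computation of the arguments $R(N-6R)$ and $T(N-3T)/2$, and the same use of $\pfrac{-4}{d}$ to absorb the $d \bmod 4$ swap via $\pfrac{-4}{n} = \pfrac{-4}{d}\pfrac{-4}{n/d}$. Your explicit remark that one must restrict to odd $d$ (justified by the $\pfrac{8}{d}$ factor in $B_{1}(n)$) is a point the paper leaves implicit, and is needed for the identity $\pfrac{-4}{n/d} = \pfrac{-4}{n}\pfrac{-4}{d}$ to hold.
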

\begin{proof}
We note that due to $\left( \mfrac{24}{m} \right)$ factor in each term, the sum on the left is supported on integers $m$ with $(m, 24) = 1$. For such $m$, we have $m^2 \equiv 1 \spmod{24}$. 
Furthermore, we have
\begin{equation}
b \left( \mfrac{n^2}{d^2} \right)  \neq 0 \iff \mfrac{n^2}{d^2} \equiv 1 \spmod{24}.   
\end{equation}
This implies that $\left( \mfrac{n}{d}, 6  \right) = 1$. Therefore, we fix $n \geq 1$ and for all $d \mid n$ with $\left( \mfrac{n}{d}, 6  \right) = 1$, we deduce that
\begin{equation}
m \in \left\{\pm \mfrac{n}{d}, \:\: \pm \left(\mfrac{n}{d} - 12 \right), \:\: \pm \left(\mfrac{n}{d} - 6\right), \:\: \pm \left(\mfrac{n}{d} - 18 \right)  \right\} \spmod{24}.  
\end{equation}

\begin{align}
 \sum_{m \geq 1}  \left(  \mfrac{24}{m} \right) a(n, d, m) &= \left( \mfrac{24}{n/d} \right) \left( \sum_{m \equiv \pm \mfrac{n}{d} \spmod{24}} a(n, d, m) - \sum_{m \equiv \pm \left(\mfrac{n}{d} - 12 \right) \spmod{24}} a(n, d, m) \right) \\
& + \left( \mfrac{-24}{n/d} \right) \left( \sum_{m \equiv \pm \left( \mfrac{n}{d} - 6 \right) \spmod{24}} a(n, d, m) - \sum_{m \equiv \pm \left(\mfrac{n}{d} - 18 \right) \spmod{24}} a(n, d, m) \right). \\
\sum_{m \geq 1}  \left(  \mfrac{24}{m} \right) a(n, d, m) = &\left( \mfrac{24}{n/d} \right) \left( \sum_{m \equiv \pm \mfrac{n}{d} \spmod{24}} a(n, d, m) - \sum_{m \equiv \pm \left(\mfrac{n}{d} - 12 \right) \spmod{24}} a(n, d, m) \right) \\
 &+ \left( \mfrac{24}{n/d} \right) \left( \mfrac{-4}{n} \right) \left( \mfrac{-4}{d} \right)  \left( \sum_{m \equiv \pm \left( \mfrac{n}{d} - 6 \right) \spmod{24}} a(n, d, m) - \sum_{m \equiv \pm \left(\mfrac{n}{d} - 18 \right) \spmod{24}} a(n, d, m) \right). \label{sum_m}
\end{align}
\noindent
We evaluate each of the individual sums in \eqref{sum_m}.
\begin{enumerate}
 \item For $m \equiv \pm \mfrac{n}{d} \spmod{24}$, we have
\begin{equation}\label{sum_pm_n/d}
    \sum_{m \equiv \pm \mfrac{n}{d} \spmod{24}} a(n, d, m) = \sum_{i = 0}^{\infty} a\left(2i \left( \mfrac{n}{d} - 12i \right) \right).
\end{equation}

\item For $m \equiv \pm \left(\mfrac{n}{d} - 12 \right) \spmod{24}$, we have
\begin{equation} \label{sum_pm_n/d - 12}
\displaystyle{\sum_{m \equiv \pm \left(\mfrac{n}{d} - 12 \right) \spmod{24}}} a(n, d, m) = \sum_{i = 0}^{\infty} a \left( (2i + 1) \left(\mfrac{n}{d} - 6(2i + 1)\right) \right).
\end{equation}
\item For $m \equiv \pm \left(\mfrac{n}{d} - 6\right) \spmod{24}$, we have
\begin{equation}\label{sum_pm_n/d - 6}
\displaystyle{\sum_{m \equiv \pm \left(\mfrac{n}{d} - 6 \right) \spmod{24}}} a(n, d, m) = \sum_{i = 0}^{\infty} a \left( \mfrac{(4i + 1) \left(\mfrac{n}{d} - 3(4i + 1)\right)}{2} \right). 
\end{equation}

\item For $m \equiv \pm \left(\mfrac{n}{d} - 18\right) \spmod{24}$, we have
\begin{equation}\label{sum_pm_n/d - 18}
\displaystyle{\sum_{m \equiv \pm \left(\mfrac{n}{d} - 18 \right) \spmod{24}}} a(n, d, m) = \sum_{i = 0}^{\infty} a \left( \mfrac{(4i + 3) \left(\mfrac{n}{d} - 3(4i + 3)\right)}{2} \right). 
\end{equation}

\end{enumerate}
Substituting \eqref{sum_pm_n/d}, \eqref{sum_pm_n/d - 12}, \eqref{sum_pm_n/d - 6} and \eqref{sum_pm_n/d - 18} in \eqref{sum_m}, we have
\begin{align}
\sum_{m \geq 1}  \left(  \mfrac{24}{m} \right) a(n, d, m) &=  \left( \mfrac{24}{n/d} \right)   \left( \sum_{i = 0}^{\infty} a\left(\mfrac{2id \left(n - 12id \right)}{d^2} \right)  -  \sum_{i = 0}^{\infty} a \left( \mfrac{(2i + 1)d \cdot (n - 6(2i + 1)d)}{d^2} \right) \right) \\
&+ \left(\mfrac{24}{n/d} \right) \left(\mfrac{-4}{n} \right) \left(\mfrac{-4}{d} \right)  \left(  \sum_{i = 0}^{\infty} a\left( \mfrac{(4i + 1)d \cdot \left( \mfrac{n - 3(4i + 1)d }{2} \right)}{d^2} \right) - \sum_{i = 0}^{\infty} a\left( \mfrac{(4i + 3)d \cdot \left( \mfrac{n - 3(4i + 3)d }{2} \right)}{d^2} \right)  \right). 
\end{align}
Substituting $2id = r$, $(2i + 1)d = s$, $(4i + 1)d = t$, $(4i + 3)d = u$, we deduce that 
\begin{align}
\sum_{m \geq 1}  \left(  \mfrac{24}{m} \right) a(n, d, m) &=  \left( \mfrac{24}{n/d} \right)   \left( \sum_{\substack{r \geq 0 \\ r \equiv 0 \spmod{2} \\ d \mid r}} a\left(\mfrac{r \left(n - 6r \right)}{d^2} \right)  -  \sum_{\substack{s \geq 0 \\ s \equiv 1 \spmod{2} \\ d \mid s}} a\left(\mfrac{s \left(n - 6s \right)}{d^2} \right) \right) \\
&+ \left(\mfrac{24}{n/d} \right) \left(\mfrac{-4}{n} \right) \left(\mfrac{-4}{d} \right)  \left( \sum_{\substack{t \geq 0 \\ t \equiv d \spmod{4} \\ d \mid t}} a\left( \mfrac{t \left( \mfrac{n - 3t}{2} \right)}{d^2} \right) - \sum_{\substack{u \geq 0 \\ u \equiv - d \spmod{4} \\ d \mid t}} a\left( \mfrac{u \left( \mfrac{n - 3u}{2} \right)}{d^2} \right)  \right). \label{eqn_24/m}
\end{align}
Furthermore, it follows that
\begin{align}
    \left(\mfrac{-4}{d} \right)  \left( \sum_{\substack{t \geq 0 \\ t \equiv d \spmod{4} \\ d \mid t}} a\left( \mfrac{t \left( \mfrac{n - 3t}{2} \right)}{d^2} \right) - \sum_{\substack{u \geq 0 \\ u \equiv - d \spmod{4} \\ d \mid u}} a\left( \mfrac{u \left( \mfrac{n - 3u}{2} \right)}{d^2} \right)  \right) = \sum_{\substack{t \geq 0 \\ d \mid t \\ t \equiv 1 \spmod{4}}} a\left( \mfrac{t \left( \mfrac{n - 3t}{2} \right)}{d^2} \right) - \sum_{\substack{u \geq 0 \\ d \mid u \\ u \equiv 3 \spmod{4}}} a\left( \mfrac{u \left( \mfrac{n - 3u}{2} \right)}{d^2} \right). \label{simplify_(-4/d)}
\end{align}
Substituting \eqref{simplify_(-4/d)} in \eqref{eqn_24/m}, we obtain
\begin{align}
    \sum_{m \geq 1}  \left(  \mfrac{24}{m} \right) a(n, d, m) &=  \left(\mfrac{24}{n/d} \right) \left( \sum_{\substack{r \geq 0 \\ r \equiv 0 \spmod{2} \\ d \mid r}} a\left(\mfrac{r \left(n - 6r \right)}{d^2} \right)  -  \sum_{\substack{s \geq 0 \\ s \equiv 1 \spmod{2} \\ d \mid s}} a\left(\mfrac{s \left(n - 6s \right)}{d^2} \right) \right) \\
&+ \left(\mfrac{24}{n/d} \right) \left(\mfrac{-4}{n} \right) \left(\sum_{\substack{t \geq 0 \\ d \mid t \\ t \equiv 1 \spmod{4}}} a\left( \mfrac{t \left( \mfrac{n - 3t}{2} \right)}{d^2} \right) - \sum_{\substack{u \geq 0 \\ d \mid u \\ u \equiv 3 \spmod{4}}} a\left( \mfrac{u \left( \mfrac{n - 3u}{2} \right)}{d^2} \right) \right).
\end{align}
as claimed.

\noindent
Using Lemma \ref{simplify_sum} in \eqref{exp_A1}, we obtain
\begin{align}
 B_{1}(n) &= \left( \mfrac{8}{n}  \right) \sum_{ d \mid n} \chi(d) d^{k - 1} \left( \sum_{\substack{r \geq 0 \\ r \equiv 0 \spmod{2} \\ d \mid r}} a\left(\mfrac{r \left(n - 6r \right)}{d^2} \right)  -  \sum_{\substack{s \geq 0 \\ s \equiv 1 \spmod{2} \\ d \mid s}} a\left(\mfrac{s \left(n - 6s \right)}{d^2} \right) \right) \\
 &+ \left(\mfrac{8}{n} \right) \left(\mfrac{-4}{n} \right) \sum_{ d \mid n} \chi(d) d^{k - 1} \left(\sum_{\substack{t \geq 0 \\ d \mid t \\ t \equiv 1 \spmod{4}}} a\left( \mfrac{t \left( \mfrac{n - 3t}{2} \right)}{d^2} \right) - \sum_{\substack{u \geq 0 \\ d \mid u \\ u \equiv 3 \spmod{4}}} a\left( \mfrac{u \left( \mfrac{n - 3u}{2} \right)}{d^2} \right) \right). \\
 &= \left(\mfrac{8}{n} \right) \left( \sum_{\substack{r \geq 0 \\ r \equiv 0 \spmod{2}}} \sum_{d \mid (r, n)}  \chi(d) d^{k - 1}  a\left( \mfrac{r(n - 6r)}{d^2} \right) - \sum_{\substack{s \geq 0 \\ s \equiv 1 \spmod{2}}} \sum_{d \mid (s, n)} \chi(d) d^{k - 1} a\left( \mfrac{s(n - 6s)}{d^2} \right) \right) \\
 &+ \left(\mfrac{8}{n} \right) \left(\mfrac{-4}{n} \right) \left( \sum_{\substack{t \geq 0 \\ t \equiv 1 \spmod{4}}} \sum_{d \mid (t, n)} \chi(d) d^{k - 1} a\left( \mfrac{t \left( \mfrac{n - 3t}{2} \right)}{d^2} \right) - \sum_{\substack{u \geq 0 \\ u \equiv 3 \spmod{4}}} \sum_{d \mid (u, n)} \chi(d) d^{k - 1} a\left( \mfrac{u \left( \mfrac{n - 3u}{2} \right)}{d^2} \right) \right). \\
  &= \left(\mfrac{8}{n} \right) \left( \sum_{\substack{r \geq 0 \\ r \equiv 0 \spmod{2}}} \sum_{d \mid (r, n - 6r)}  \chi(d) d^{k - 1}  a\left( \mfrac{r(n - 6r)}{d^2} \right) - \sum_{\substack{s \geq 0 \\ s \equiv 1 \spmod{2}}} \sum_{d \mid (s, n - 6s)} \chi(d) d^{k - 1} a\left( \mfrac{s(n - 6s)}{d^2} \right) \right) \\
 &+ \left(\mfrac{8}{n} \right) \left(\mfrac{-4}{n} \right) \left( \sum_{\substack{t \geq 0 \\ t \equiv 1 \spmod{4}}} \sum_{d |\left( t, \mfrac{n - 3t}{2}  \right)} \chi(d) d^{k - 1} a\left( \mfrac{t \left( \mfrac{n - 3t}{2} \right)}{d^2} \right) - \sum_{\substack{u \geq 0 \\ u \equiv 3 \spmod{4}}} \sum_{d | \left(u, \mfrac{n - 3u}{2}  \right)} \chi(d) d^{k - 1} a\left( \mfrac{u \left( \mfrac{n - 3u}{2} \right)}{d^2} \right) \right).
\end{align}
Further, using Proposition \ref{multiplicativity}, we obtain
\begin{align}
    B_{1}(n) &=  \left(\mfrac{8}{n} \right) \left(  \sum_{\substack{r \geq 0 \\ r \equiv 0 \spmod{2}}} a(r) a(n - 6r) - \sum_{\substack{s \geq 0 \\ s \equiv 1 \spmod{2}}} a(s) a(n - 6s) \right) \\
   &+ \left(\mfrac{8}{n} \right) \left(\mfrac{-4}{n} \right) \left( \sum_{\substack{ t \geq 0 \\ t \equiv 1 \spmod{4}}} a(t) a\left(\mfrac{n - 3t}{2} \right) - \sum_{\substack{u \geq 0 \\ u \equiv 3 \spmod{4}}} a(u) a\left(\mfrac{n - 3u}{2} \right) \right). \label{eqn_B1}
\end{align}
We define 
\begin{equation} \label{defn_g(a,b)}
g_{a,b}(z) = \left\{g(z) \in M_{k}(N, \chi) : g(z) =  \sum_{n \equiv a \spmod{b}} a(n) q^{n}\right\}.
\end{equation}
With $\tilde{g} = g \mid U_{2} \mid V_{2}$ and using \eqref{defn_g(a,b)}, we deduce that
\begin{align}
 g_{0,2} = \tilde{g},  g_{1,2} = g - \tilde{g},  g_{1,4} = \mfrac{1}{2}\left( g - \tilde{g} + \left(g \otimes \left(\mfrac{-4}{\cdot}  \right) \right) \right),  g_{3,4} = \mfrac{1}{2}\left( g - \tilde{g} - \left(g \otimes \left(\mfrac{-4}{\cdot}  \right) \right) \right) \label{simplify_g_(a,b)}
\end{align}
So, we conclude with
\begin{align}
 \mfrac{\eta(2z)^3}{\eta(z) \eta(4z)} g(z)    &= \sum_{n \geq 1} B_{1}(n) q^{n} \\
    &= \left( \mfrac{8}{n} \right) \left( \left( g(z) \cdot (g_{0,2} \mid V_6)(z) \right) -  \left( g(z) \cdot (g_{1,2} \mid V_6)(z) \right) \right) \\
    &+ \left( \mfrac{-8}{n} \right) \left(\left((g \mid V_2)(z)  \cdot (g_{1,4} \mid V_3)(z)\right) - ((g \mid V_2)(z)  \cdot (g_{3,4} \mid V_3)(z)) \right)  \\ 
    &= \left(g(z) \cdot \left[ \left( \left( g_{0,2} - g_{1,2}  \right) \mid V_{6} \right)(z) \right] \right) \otimes \left( \mfrac{8}{n} \right)\\
    &+ \left(\left( g \mid V_2  \right)(z) \cdot \left[ \left( \left( g_{1,4} - g_{3,4}  \right) \mid V_{3} \right)(z) \right] \right) \otimes \left( \mfrac{-8}{n} \right). \label{simplify_S_1}
\end{align}
Substituting \eqref{simplify_g_(a,b)} in \eqref{simplify_S_1}, using \eqref{Comm_V_twist} and \eqref{rankin_01}, we obtain the desired result.
\end{proof}

\subsection{Proof of Theorem \ref{first_image}}
\begin{proof}
We prove part (1) of Theorem \ref{first_image}. The proof of part (2) follows a similar approach using Lemma \ref{theta_quasi}. 

\medskip

Let $1 \leq r \leq 23$ with $(r, 6) = 1$. Let $\lambda \geq 2$ and $N \geq 1$ be integers. Let $t \geq 1$ be a square-free integer and let $\chi \in \hat{G}_{N}$. Let $g(z) \in M_{k}(N, \chi)$ be a normalized Hecke eigenform for all $T_p$. Using part (1.a) of Lemma \ref{theta_series}, it follows that
\begin{equation}
\eta(z) g(z) =  \sum_{n \geq 1} \sum_{m \geq 0} \left( \frac{12}{n} \right) \: a(m) \: q^{m + \frac{n^2}{24}} \in M_{k + 1/2}\left(N, \chi \nu_{\eta}  \right).   
\end{equation}
Plugging in $\mfrac{t}{24} = m + \frac{n^2}{24}$ implies that $t \equiv 1 \spmod{24}$. Therefore, we have
\begin{equation}
\eta(z) g(z) = \sum_{\substack{t \geq 1 \\ t \equiv 1 \spmod{24}}} b(t) \: q^{\frac{t}{24}}, \:\: \text{where} 
 \end{equation}
 \begin{equation}
\begin{aligned}
  b(t) = 
  \begin{dcases}
   \sum_{n \geq 1} \left(\frac{12}{n} \right) a \left( \frac{t - n^2}{24} \right), & \text{$t \equiv 1 \spmod{24}$}
   \\
    0,  & \text{$t \not\equiv 1 \spmod{24}$}.
\end{dcases}
\end{aligned}
\end{equation}
Using Theorem \ref{AAD1}, we have
\begin{equation}
\mathcal{S}_{1}\left(\eta(z) g(z)\right) = \mathcal{S}_{1}\left( \sum_{\substack{m \geq 1 \\ m \equiv 1 \spmod{24}}} b(m) q^{\frac{m}{24}} \right)   = \sum_{n \geq 1} B_{1}(n) q^{n},
\end{equation}
where \eqref{image_24} gives
\begin{align}
B_{1}(n) &= \sum_{d \mid n} \chi(d)  \left( \mfrac{12}{n/d} \right) d^{k - 1} b \left( \mfrac{n^2}{d^2}  \right),   \\
&= \sum_{d \mid n} \chi(d)  \left( \mfrac{12}{n/d} \right) d^{k - 1} \sum_{m \geq 1} \left(  \left(  \mfrac{12}{m} \right) a \left( \mfrac{\mfrac{n^2}{d^2} - m^2}{24}  \right) \right), \\
&= \sum_{d \mid n} \chi(d)  \left( \mfrac{12}{n/d} \right) d^{k - 1} \sum_{m \geq 1}  \left(  \mfrac{12}{m} \right) a(n, d, m) \label{exp_1A}, 
\end{align}
where we have written $a(n, d, m):= a \left(\mfrac{\mfrac{n^2}{d^2} - m^2}{24}\right)$ for brevity.

We note that due to $\left( \mfrac{12}{m} \right)$ factor in each term of \eqref{exp_1A}, the sum on the left is supported on integers $m$ with $(m, 6) = 1$. For such $m$, we have $m^2 \equiv 1 \spmod{24}$. 
Furthermore, we have
\begin{equation}
b \left( \mfrac{n^2}{d^2} \right)  \neq 0 \iff \mfrac{n^2}{d^2} \equiv 1 \spmod{24}.   
\end{equation}
This implies that $\left( \mfrac{n}{d}, 6  \right) = 1$. Therefore, we fix $n \geq 1$ and for all $d \mid n$ with $\left( \mfrac{n}{d}, 6  \right) = 1$, we deduce that
\begin{equation}
m \in \left\{\pm \mfrac{n}{d}, \:\: \pm \left(\mfrac{n}{d} - 6\right) \right\} \spmod{12}.  
\end{equation}

\begin{align}
 \sum_{m \geq 1}  \left(  \mfrac{12}{m} \right) a(n, d, m) = \left( \mfrac{12}{n/d} \right) \left( \sum_{m \equiv \pm \mfrac{n}{d} \spmod{12}} a(n, d, m) - \sum_{m \equiv \pm \left(\mfrac{n}{d} - 6 \right) \spmod{12}} a(n, d, m) \right). \label{sum_12_m}
 \end{align}
Substituting \eqref{sum_12_m} in \eqref{exp_1A}, we obtain
\begin{equation}
    B_{1}(n) = \sum_{d \mid n} \chi(d)  1_{6}\left(\frac{n}{d}\right) d^{k - 1} \left( \sum_{m \equiv \pm \mfrac{n}{d} \spmod{12}} a(n, d, m) - \sum_{m \equiv \pm \left(\mfrac{n}{d} - 6 \right) \spmod{12}} a(n, d, m) \right). \label{1_6}
\end{equation}
We first consider the case when $\frac{n}{d} \equiv 3 \spmod{6}$. This implies that $\frac{n}{d} \equiv 3, 9 \spmod{12}$ and in each of these cases, using \eqref{1_6}, it follows that the terms cancel out each other. Therefore, we deduce that
\begin{equation}
    B_{1}(n) = \sum_{d \mid n} \chi(d)  1_{2}\left(\frac{n}{d}\right) d^{k - 1} \left( \sum_{m \equiv \pm \mfrac{n}{d} \spmod{12}} a(n, d, m) - \sum_{m \equiv \pm \left(\mfrac{n}{d} - 6 \right) \spmod{12}} a(n, d, m) \right). \label{1_2}
\end{equation}
We claim that $\frac{n}{d} \equiv 1 \spmod{2}$ if and only if $v_{2}(n) = v_{2}(d)$. To observe this, let $n = 2^{v_2(n)} x$ where $x \equiv 1 \spmod{2}$. Let $d = 2^{v_2(d)} y$ where $y \equiv 1 \spmod{2}$ and let $d \mid n$.
\begin{equation} \label{n/d_odd}
1 \equiv \frac{n}{d} \equiv \frac{2^{v_2(n)} x}{2^{v_2(d)} y}  \equiv 2^{v_2(n) - v_2(d)} \:  \frac{x}{y} \equiv 2^{v_2(n) - v_2(d)} \equiv 1 \spmod{2} \iff v_{2}(n) = v_{2}(d).
\end{equation}
Therefore, we have $1_{2}\left(\frac{n}{d}\right) = 1$ and $\frac{n}{d} = \frac{x}{y}$ where $x \equiv y \equiv 1 \spmod{2}$. Using \eqref{1_2}, we deduce that
\begin{align}
    B_{1}(n) &= \sum_{y \mid x} \chi(2^{v_2(d)} y) (2^{v_2(d)} y)^{k - 1} \left( \sum_{i = 0}^{\infty} a\left(i \left( \mfrac{x}{y} - 6i \right) \right) -  \sum_{j = 0}^{\infty} a \left( \frac{(2j + 1) \left(\mfrac{x}{y} - 3(2j + 1) \right)}{2} \right)  \right). \\
    &= \sum_{y \mid x} \chi(2)^{v_2(n)} \chi(y) (2^{v_2(n)})^{k - 1} y^{k - 1} \left( \sum_{i = 0}^{\infty} a\left( \frac{iy (x - 6iy)}{y^2} \right) -  \sum_{j = 0}^{\infty} a\left( \frac{(2j + 1)y \left( \frac{x - 3 (2j + 1)y}{2}  \right)}{y^2} \right) \right).
\end{align}
Let $t \geq 1$ with $y \mid t$. Since $y \equiv 1 \spmod{2}$, we deduce that $t = (2j + 1)y \iff t \equiv y \spmod{2}$. Further, we let $s = iy$. Thus, we have
\begin{align}
    B_{1}(n) &= \sum_{y \mid x} \chi(2)^{v_2(n)} \chi(y) (2^{v_2(n)})^{k - 1} y^{k - 1} \left( \sum_{\substack{s = 0 \\ d \mid s}}^{\infty} a\left( \frac{s (x - 6s)}{y^2} \right) -  \sum_{\substack{t = 0 \\ d \mid t, \: t \: \text{odd}}}^{\infty} a\left( \frac{t \left( \frac{x - 3t}{2}  \right)}{y^2} \right) \right).\\
    &= \chi(2)^{v_2(n)} (2^{v_2(n)})^{k - 1} \left( \sum_{s = 0}^{\infty} \sum_{y \mid (s, x - 6s)} \chi(y) y^{k - 1} a\left( \frac{s (x - 6s)}{y^2} \right) -  \sum_{\substack{t = 0 \\ t \: \text{odd}}}^{\infty} \sum_{y \mid \left( t, \frac{x - 3t}{2} \right)} \chi(y) y^{k - 1} a\left( \frac{t \left( \frac{x - 3t}{2}  \right)}{y^2} \right) \right).
\end{align}
Further, using Proposition \ref{multiplicativity}, we obtain
\begin{align}\label{B_1}
    B_{1}(n) = \chi(2)^{v_2(n)} (2^{v_2(n)})^{k - 1} \left( \sum_{s = 0}^{\infty} a(s) a(x - 6s) -  \sum_{t = 0}^{\infty}  a(t) a \left( \frac{x - 3t}{2}  \right) \right).
\end{align}
Consider $G(z) = g(z) g(6z) - g(2z) g(3z) = \displaystyle{\sum_{n = 0}^{\infty}} c(n) q^{n}$. Therefore, we have
\begin{equation} \label{B1=c}
 B_{1}(n) = \chi(2)^{v_2(n)} (2^{v_2(n)})^{k - 1} c(x) = \chi(2)^{v_2(n)} (2^{v_2(n)})^{k - 1} c \left( \frac{n}{2^{v_{2}(n)}} \right) \:\: \text{since} \:\: n = 2^{v_{2}(n)}x. 
\end{equation}
We note that for all $n \geq 0$ with $(n, 6) = 1$, we have $B_{1}(n) = c(n)$. We claim that for $(n, 6) > 1$, we have $B_{1}(n) = c(n)$. We firstly show that $G(z) \in S_{2k}^{\new 2, 3}(6N, \chi^2, \epsilon_{2, 1, \chi}, \epsilon_{3, 1, \chi})$.

 We prove that $G(z)$ has eigenvalues $\epsilon_{2,1,\chi} = -\chi(2)$ and $\epsilon_{3,1,\chi} = -\chi(3)$ for the Atkin-Lehner operator $W_{2}^{6N}$ and $W_{3}^{6N}$, respectively. We prove the result for $p = 2$ case and the proof runs parallel for $p = 3$.
Let $(N, 6) = 1$. We let $g \in M_{k}(N, \chi) \subseteq M_{k}(6N, \chi \cdot 1_6)$. Using \eqref{defn_W}, we have
\begin{equation}\label{W_simplify}
    W_{2}^{6N} = \pMatrix{2a}{b}{6Nc}{2} = \pMatrix{a}{b}{3Nc}{2} \pMatrix{2}{0}{0}{1}.
\end{equation}
We compute
\begin{equation}\label{action_W2}
    G \mid_{2k} W_{2}^{6N} = g \mid_{k} W_{2}^{6N} \cdot  g \mid V_{6} \mid_k W_{2}^{6N} - g \mid V_2 \mid W_{2}^{6N} \cdot g \mid V_3 \mid_k W_{2}^{6N}.
\end{equation}
We now evaluate each component in \eqref{action_W2}, separately.
\begin{enumerate}
    \item
    \begin{align}
          g \mid_k  W_{2}^{6N} = g \mathrel{\bigg|_k} \pMatrix{a}{b}{3Nc}{2} \pMatrix{2}{0}{0}{1} = \chi(2) \: g \mathrel{\bigg|_k} \pMatrix{2}{0}{0}{1} = 2^{k/2} \: \chi(2) \: g \mid_k \: V_2.
    \label{eq:1}
    \end{align}
    \item 
    \begin{align}
    g \mid V_2 \mid_k W_{2}^{6N} = 2^{-k/2} \: g \mathrel{\bigg|} \pmatrix{2}{0}{0}{1} \mathrel{\bigg|_k} \pmatrix{2a}{b}{6Nc}{2} &= 2^{-k/2} \: g \mathrel{\bigg|} \pmatrix{2}{0}{0}{2} \mathrel{\bigg|_k} \pmatrix{2a}{b}{3Nc}{1} \\
    &= 2^{-k/2} \: \chi(1) \: g = 2^{-k/2} \: g . \label{eq:2}  
    \end{align}
 \item 
 \begin{align}
 g \mid V_3 \mid_k W_{2}^{6N} = 3^{-k/2} \: g \mathrel{\bigg|} \pmatrix{3}{0}{0}{1} \mathrel{\bigg|_k} \pmatrix{2a}{b}{6Nc}{2} &= 3^{-k/2} \: g \mathrel{\bigg|} \pmatrix{a}{3b}{Nc}{2} \mathrel{\bigg|_k} \pmatrix{6}{0}{0}{1} \\
 &= 2^{k/2} \: \chi(2) \: g \mid_k V_6. \label{eq:3}
 \end{align}
\item 
\begin{align}
    g \mid V_6 \mid W_2^{6N} = g \mid V_3 \mid V_2 \mid W_{2}^{6N} = 2^{-k/2} \:  \chi(1) \: g \mid_k V_3 = 2^{-k/2} \: g \mid_k V_3. \label{eq:4}
\end{align}
\end{enumerate}
\noindent
Substituting \eqref{eq:1}, \eqref{eq:2}, \eqref{eq:3} and \eqref{eq:4} in \eqref{action_W2}, we conclude that
\begin{equation}\label{W2_eigenvalue}
    G \mid_{2k} W_{2}^{6N} = - \chi(2)\: G = \epsilon_{2, 1, \chi}\: G.
\end{equation}

Next, we want to show that $G$ is new at primes $p \in \{2, 3\}$. Using Lemma \ref{new_criterion}, it suffices to show that $Tr_{6N/p}^{6N}(G) = 0 = Tr_{6N/p}^{6N}(G \mid_k H_{6N})$. We prove the result for $p = 2$ case and the proof follows a similar approach for $p = 3$. To begin with, we show that $Tr_{3N}^{6N}(G) = 0.$

\medskip

\noindent
Using \eqref{defn_Hecke} and $g \mid_k T_p = a(p) g$ for all primes $p$, we deduce that
\begin{equation}\label{action_Up}
 g \mid_k U_p =  a(p) g - \chi(p) p^{k - 1} g \mid_k V_p.   
\end{equation}
We compute
\begin{align}
    G \mid U_2 = g \mid_k U_2 \cdot g \mid_k V_3 - g \cdot g \mid_k V_3 \mid U_2.
\end{align}
Using \eqref{action_Up} and the commutativity of $U_2$ and $V_3$, we conclude that
\begin{equation}\label{U2_simplify}
    G \mid U_2 = \chi(2) \: 2^{k - 1} G.
\end{equation}
Using \eqref{defn_trace1}, we have
\begin{align}
Tr_{3N}^{6N}(G) &= G + \overline{\chi^2}(2) \: 2^{1 - k} \: G \mid W_{2}^{6N} \mid_k U_2,   \\
&= G + \overline{\chi^2}(2) \: 2^{1 - k} (-\chi(2)  G) \mid_k U_2, \quad \text{using} \:\: \eqref{W2_eigenvalue} \\
&= G - \overline{\chi^2}(2) \: 2^{1 - k} \: \chi(2) (\chi(2) \: 2^{k - 1} G), \quad \text{using} \:\: \eqref{U2_simplify}\\
&= 0. \\
\end{align}
Lastly, we show that $Tr_{6N/p}^{6N}(G \mid_k H_{6N}) = 0$ for $p \in \{2,3\}$. We present the proof for $p = 2$ case. The proof for $p = 3$ follows a similar approach. 

\medskip

We present the proof for any normalized Hecke eigenform $g \in S_{k}(N, \chi)$ for all $T_p$. The proof for $g \in E_k(N, \chi)$ follows a similar approach using Theorem \ref{eisen_decomp}. 

\medskip

Using Proposition \ref{decompose_newform}, there exists $Cond(\chi) \mid M \mid N$, a unique newform $h \in S_{k}(M, \chi)$, and $\alpha_d \in \C$ such that for all $d \mid \mfrac{N}{M}$, we have
\begin{equation}\label{exp_g}
g = \sum_{d \mid \mfrac{N}{M}} \alpha_d \: h \mid_k V_d.  
\end{equation}
We claim that 
\begin{equation}\label{g_Hn}
g \mid_k H_{N} = \beta \left(\mfrac{N}{M}\right)^{k/2} \sum_{d \mid \mfrac{N}{M}} \alpha_d \: d^{-k} \: h \mathrel{\bigg|_k} V_{\mfrac{N}{Md}}.
\end{equation}
We compute 
\begin{align}
    g \mid_k H_N &= \sum_{d \mid \mfrac{N}{M}} \alpha_d \: h \mid V_d \mid_k H_N, \:\: \text{using} \:\: \eqref{exp_g} \\
    &= \sum_{d \mid \mfrac{N}{M}} \alpha_d \: d^{-k/2} \: h \mid_k \pMatrix{d}{0}{0}{1} \pMatrix{0}{-1}{N}{0}.    
\end{align}
We observe that
\begin{equation}\label{matrix_simp}
    \pMatrix{d}{0}{0}{1} \pMatrix{0}{-1}{N}{0} = \pMatrix{0}{-d}{N}{0} =  \pMatrix{d}{0}{0}{d} \pMatrix{0}{-1}{M}{0} \pMatrix{\mfrac{N}{Md}}{0}{0}{1}.
    \end{equation}
Therefore, using \eqref{matrix_simp}, we deduce that
\begin{equation}
    g \mid_k H_N = \sum_{d \mid \mfrac{N}{M}} \alpha_d \: d^{-k/2} \: h \mid H_M \mid_k \pMatrix{\mfrac{N}{Md}}{0}{0}{1} = \beta \sum_{d \mid \mfrac{N}{M}} \alpha_d \: d^{-k} \left(\mfrac{N}{M}\right)^{k/2} h \mathrel{\bigg|_k} \: V_{\mfrac{N}{Md}},
\end{equation}
as claimed. The last equality follows from the fact there exists $\beta \in \C$ with $\abs{\beta} = 1$ such that $h \mid_k H_M = \beta h$.

Next, we claim that for $G = g \cdot g \mid_k V_6 - g \mid_k V_2 \cdot g \mid_k V_3 \in S_{2k}(6N, \chi^2)$, we have
\begin{align} 
G \mid_{2k} H_{6N} &= \beta^2 \left(\mfrac{N}{M}\right)^{k} \left( \sum_{d \mid \mfrac{N}{M}} \alpha_d \: d^{-k} \: h \mathrel{\bigg|_k} V_{\mfrac{6N}{Md}} \right) \left( \sum_{t \mid \mfrac{N}{M}} \alpha_t \: t^{-k} \: h \mathrel{\bigg|_k} V_{\mfrac{N}{Mt}} \right)  \\
&- \beta^2 \left(\mfrac{N}{M}\right)^{k} \left( \sum_{s \mid \mfrac{N}{M}} \alpha_s \: s^{-k} \: h \mathrel{\bigg|_k} V_{\mfrac{2N}{Ms}} \right) \left( \sum_{r \mid \mfrac{N}{M}} \alpha_r \: r^{-k} \: h \mathrel{\bigg|_k} V_{\mfrac{3N}{Mr}} \right).\label{action_H_6N}
\end{align}
We compute
\begin{align}
 G \mid_{2k} H_{6N} &= g \mid_k H_{6N} \cdot g \mid V_6 \mid_{k} H_{6N} - g \mid V_{2} \mid_k H_{6N} \cdot g \mid V_3 \mid_k H_{6N}, \\
 &= (6^{k/2} \: g \mid H_N \mid_k V_6)(6^{-k/2} \: g \mid_k H_N) - (6^{k/2} \: 2^{-k} g \mid H_N \mid_k V_2)(6^{k/2} \: 3^{-k} g \mid H_N \mid V_3),
\: \text{using} \: \eqref{comm_VH}, \\
 &= (g \mid H_N \mid_k V_6) (g \mid_k H_N) - (g \mid H_N \mid_k V_2) (g \mid H_N \mid_k V_3).
\end{align}
Using \eqref{g_Hn}, we obtain the desired result. 
We now require the following commutativity result. Let $c \mid 6$. Using \eqref{eq:1} and \eqref{eq:2}, we have
\begin{equation}
\begin{aligned}\label{comm_Vc_W2}
  h \mathrel{\bigg|_k} V_{\mfrac{cN}{Md}} \mathrel{\bigg|_k} W_{2}^{6N} = 
  \begin{dcases}
    2^{k/2} \: \chi(2) \: h \mathrel{\bigg|_k} V_{\mfrac{2cN}{Md}},
  & \text{$c \in \{ 1, 3\}$}, \\
  2^{-k/2} \: h \mathrel{\bigg|_k} V_{\mfrac{cN}{2Md}},
  & \text{$c \in \{2, 6 \}$}. \\
\end{dcases}
\end{aligned}
\end{equation}
Taking the action of $W_{2}^{6N}$ on both sides of \eqref{action_H_6N} and then substituting \eqref{comm_Vc_W2} in \eqref{action_H_6N}, we deduce that
\begin{equation}\label{comm_H6_W2}
 G \mid H_{6N} \mid_{2k} W_{2}^{6N} = - \chi(2) \: G \mid_{2k} H_{6N}.    
\end{equation}
\noindent
Since $h \in S_{k}^{new}(M, \chi)$ with $M$ odd, we have
\begin{equation}
 h \mid T_2 = a_{2}(h) h \:\: \text{and} \:\: h \mid_k U_2 = a_{2}(h) h - \chi(2) 2^{k - 1} h \mid_k V_2.    
\end{equation}
Thus, it follows that
\begin{align}
 G \mid H_{6N} \mid_{2k} W_{2}^{6N} \mid U_2 &= (- \chi(2) \: G \mid_{2k} H_{6N}) \mid U_2,  \:\: \text{using} \:\: \eqref{comm_H6_W2}\\
 &= - \chi(2) \: (\chi(2) \: 2^{k - 1} \: G \mid_{2k} H_{6N}), \\
 &= - \chi^{2}(2) \: 2^{k - 1} \: G \mid_{2k} H_{6N} \label{simplify}.
\end{align}
By definition of trace operator as in \eqref{defn_trace1}, we have
\begin{align}
Tr_{3N}^{6N}(G \mid_{2k} H_{6N}) &=  G \mid_{2k} H_{6N} + \overline{\chi^2}(2) \: 2^{1 - k} \: G \mid H_{6N} \mid W_{2}^{6N} \mid_{2k} U_2,  \\
&= G \mid_{2k} H_{6N} + \overline{\chi^2}(2) \: 2^{1 - k} (- \chi^{2}(2) \: 2^{k - 1} \: G \mid_{2k} H_{6N}), \:\: \text{using} \:\: \eqref{simplify}. \\
&= 0. 
\end{align}

Hence, we conclude that $G(z) = g(z) g(6z) - g(2z) g(3z) \in S_{2k}^{\new 2, 3}(6N, \chi^{2}, \epsilon_{2, 1, \chi},  \epsilon_{3, 1, \chi})$. 
\medskip
Lastly, we show that $B_{1}(n) = c(n)$ for all $n \geq 0$ with $(n, 6) > 1$. 
We require the following lemma.
\begin{lemma} \label{2-3-new}
Let $H(z) = \displaystyle{\sum_{n \geq 0}} r(n) q^{n} \in M_{2k}^{\new 2,3}(6N, \chi^2, - \chi(2), - \chi(3))$. Then for all $u, v \geq 0$ and for all $m, n \geq 0$, we have
\begin{enumerate}[label=(\alph*)]
    \item $r(2^u m) = \chi(2)^u \: 2^{u(k - 1)} \: r(m)$ and
     \item $r(3^v n) = \chi(3)^v \: 3^{v (k - 1)} \: r(n)$. 
\end{enumerate}
\end{lemma}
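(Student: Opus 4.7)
The strategy is to reduce both parts of the lemma to a single eigenvalue relation under the $U_p$ operator for $p \in \{2,3\}$. Concretely, if I can establish
\[
H \bigm|_{2k} U_p = \chi(p)\, p^{k-1}\, H,
\]
then reading off the coefficient of $q^n$ on either side yields $r(pn) = \chi(p)\, p^{k-1}\, r(n)$ for every $n \geq 0$. A straightforward induction on the exponent then delivers $r(p^u m) = \chi(p)^u\, p^{u(k-1)}\, r(m)$, which is exactly part (a) when $p = 2$ and part (b) when $p = 3$. So the entire content of the lemma is packaged into the one $U_p$-identity above.

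To derive that identity I will invoke the newness criterion of Lemma \ref{new_criterion} combined with the hypothesis on the Atkin-Lehner eigenvalue. Since $H$ is new at $p$, the criterion (specialised to weight $2k$) gives
\[
\text{Tr}_{6N/p}^{6N}(H) = H + \overline{\chi^2}(p)\, p^{1-k}\, H \bigm| W_p^{6N} \bigm| U_p = 0.
\]
Substituting $H \mid W_p^{6N} = -\chi(p)\, H$ from the hypothesis and using linearity of $U_p$ transforms this into
\[
H - \overline{\chi^2}(p)\, \chi(p)\, p^{1-k}\, (H \mid U_p) = 0.
\]
Because $(p,N) = 1$ (automatic from $(N,6) = 1$), $\chi(p)$ is a root of unity, so $\overline{\chi^2}(p)\, \chi(p) = \chi(p)^{-1}$, and solving for $H \mid U_p$ produces $H \mid U_p = \chi(p)\, p^{k-1}\, H$ exactly as required.

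No serious obstacle arises: the calculation is the standard Atkin-Lehner relation that a form new at $p$ with $W_p$-eigenvalue $\epsilon_p = -\chi(p)$ has $U_p$-eigenvalue $\chi(p)\, p^{k-1}$, and the iteration is immediate. I note that only one of the two trace-vanishing conditions in Lemma \ref{new_criterion} is needed; the companion condition $\text{Tr}_{6N/p}^{6N}(H \mid H_{6N}) = 0$ plays no role here. The same argument proves (a) and (b) with the roles of $p = 2$ and $p = 3$ swapped.
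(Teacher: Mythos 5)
Your proposal is correct and follows essentially the same route as the paper: both derive $H \mid U_p = \chi(p)\,p^{k-1}H$ by combining the vanishing of $\mathrm{Tr}_{6N/p}^{6N}(H)$ from Lemma \ref{new_criterion} with the Atkin--Lehner eigenvalue $-\chi(p)$, and then iterate to get the coefficient relations. Your side remark that the companion condition on $\mathrm{Tr}_{6N/p}^{6N}(H \mid H_{6N})$ is not needed here is accurate and consistent with the paper's argument.
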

\begin{proof}
    Let $p \in \{2, 3\}$. Since $H$ is $p$-new, using Lemma \ref{new_criterion}, it follows that $Tr_{6N/p}^{6N}(H) = 0$. Using \eqref{defn_trace1}, we compute
    \begin{align}
        Tr_{6N/p}^{6N}(H) = H + \overline{\chi^{2}}(p) \: p^{1 - k} \: H \mid W_{p}^{6N} \mid U_p &= 0, \\
        H + \overline{\chi^{2}}(p) \: p^{1 - k} (-\chi(p) H) \mid U_p &= 0, \\
        H \mid U_p = \chi(p) \: p^{k - 1} \: H. 
    \end{align}
This implies that for all  $t \geq 0$, we have $H \mid U_p^{t} = \chi(p)^t \: p^{t(k - 1)} H$. Hence, we conclude that for all $t \geq 0$ and for all $s \geq 0$, we have $r(p^t s) = \chi(p)^t p^{t(k - 1)} r(s)$.
\end{proof}
Let $n = 2^u 3^v x$ with $u, v \geq 0$ and $(x, 6) = 1$. We do a case-by-case analysis as follows:
\begin{enumerate}
    \item Let $u = v = 0$. This implies that $n = x$ with $(x, 6) = 1$. By earlier work, we have $B_{1}(n) = c(n)$.
    \item Let $u \geq 1$ and $v = 0$. This implies that $n = 2^u x$. We have
    \begin{align}
    B_{1}(n) &= B_{1}(2^u x) = \chi(2)^u \: 2^{u(k - 1)} \: B_{1}(x), \: \text{using Lemma}  \: \ref{2-3-new} \\
    &=  \chi(2)^u \: 2^{u(k - 1)} \: c(x), \: \text{using part (1)} \\
    &= c(2^u x), \: \text{using Lemma}  \: \ref{2-3-new} \\
    &= c(n).
    \end{align}
    \item Let $u = 0$ and $v \geq 1$. This implies that $n = 3^v x$. A similar computation as in part (2) shows that $B_{1}(n) = c(n)$. 
    \item Let $u \geq 1$ and $v \geq 1$. This implies that $n = 2^u 3^v x$. We have
    \begin{align}
    B_{1}(n) &= B_{1}(2^u 3^v x) =  \chi(2)^u \: 2^{u(k - 1)} \: \chi(3)^v \: 3^{v(k - 1)}  \: B_1(x), \: \text{using Lemma}  \: \ref{2-3-new}  \\
    &=  \chi(2)^u \: 2^{u(k - 1)} \: \chi(3)^v \: 3^{v(k - 1)}  \: c(x), \: \text{using part (1)} \\
    &= c(2^u 3^v x), \: \text{using Lemma}  \: \ref{2-3-new} \\
    &= c(n).
    \end{align}
\end{enumerate}
Thus, we conclude that for all $u, v \geq 0$, we have $B_{1}(n) = c(n)$ for all $n \geq 0$. This completes the proof that $S_1(\eta(z) g(z)) = G(z) \in S_{2k}^{\new 2,3}(6N, \chi^2, \epsilon_{2, 1, \chi}, \epsilon_{3, 1, \chi})$.
\end{proof}
\subsection{Proof of Theorem \ref{image_(r,6) = 1}}
Let $1 \leq r \leq 23$ with $(r,6) = 1$ and let $s \geq 0$ be an even integer. Let $f(z) \in M_s(1)$ and let $k = \mfrac{r - 1}{2} + s$. We recall that $M_{k}\left(r, \left( \frac{\cdot}{r} \right)\right)$ has a basis $\{g_{1}, \ldots, g_{d}\}$ of normalized Hecke eigenforms for all $T_{p}$, so there exists $\alpha_{i}$ for $1 \leq i \leq d$ such that
    \begin{equation} \label{eq:alpha_i}
    \left( \frac{\eta(z)^{r}}{\eta(rz)} f(z) \right) \mathrel{\bigg|_k} U_{r} = \sum_{i = 1}^{d} \alpha_i g_{i}(z).
    \end{equation}
For all $1 \leq i \leq d$, we define $G_i(z) = g_i(z) g_i(6z) - g_i(2z) g_i(3z) \in S_{2k}(6r)$. We note that part (1) of Theorem \ref{first_image} implies that $G_i(z)$ is new at primes $p =2, 3$ with Atkin-Lehner eigenvalues $-\left( \mfrac{8}{r} \right)$ and $-\left( \mfrac{12}{r} \right)$, respectively. 

\medskip
\noindent
We show that $\mathcal{S}_{r}(\eta^r f) = \displaystyle{\sum_{i = 1}^{d}} \alpha_{i} G_{i}$. \\
We compute
\begin{align}
\mathcal{S}_r(\eta^{r}(z) f(z)) &= \mathcal{S}_{1}((\eta^{r}(z) f(z)) \mid_k U_r),  \quad \text{using part} \:\:\ref{eq:Comm_S1:d} \:\: \text{of Lemma} \:\: \ref{Comm_(r,6)=1} \\
& = \mathcal{S}_{1}\left(\eta(z) \left( \left( \frac{\eta^{r}(z)}{\eta(rz)} f(z) \right) \mathrel{\bigg|_k} U_{r} \right) \right), \\
& = \mathcal{S}_{1} \left( \eta(z) \left( \sum_{i = 1}^{d} \alpha_{i} g_{i}(z)    \right) \right), \quad \text{using} \:\: \eqref{eq:alpha_i} \\
&= \sum_{i = 1}^{d} \alpha_i \mathcal{S}_{1} \left( \eta(z) g_{i}(z) \right), \\
 & = \sum_{i = 1}^{d} \alpha_{i} G_{i}, \quad \text{using part} \: (1) \: \text{of Theorem}\:\: \ref{first_image}.
\end{align}
Next, we prove that $\mathcal{S}_{r}(\eta^r f) = \sum \alpha_{i} G_{i}$ has level 6.
Combining part \ref{eq:Comm_S1:b} and part \ref{eq:Comm_S1:d} of Lemma \ref{Comm_(r,6)=1}, we deduce that
\begin{equation} \label{relation_S1_Sr}
\Sh_1 \left( \eta(z) \left[ \left(\mfrac{\eta^r(z)}{\eta(rz)}  f(z) \right) \mathrel{\bigg|_k} U_r \right]  \right)   = \Sh_{r}(\eta^r(z) f(z)) = \left( \mfrac{12}{r} \right) \: \Sh_1 \left( \eta(z) \left( \mfrac{\eta^r(rz)}{\eta(z)} f(rz) \right)  \right) \mathrel{\bigg|_{2k}} U_r.
\end{equation}
Further, we compute 
\begin{align}
   \left( \mfrac{\eta^r(z)}{\eta(rz)} f(z) \right) \mathrel{\bigg|_k} H_r &= \mfrac{r^{(k + 1)/2}}{i^{(r-1)/2}} \: \mfrac{\eta^r(rz)}{\eta(z)} f(rz), \label{eq:eta_Hr1} \\
     \left( \mfrac{\eta^r(rz)}{\eta(z)} f(rz) \right) \mathrel{\bigg|_k} H_r &= \mfrac{1}{r^{(k + 1)/2} \cdot i^{(r-1)/2}} \: \mfrac{\eta^r(z)}{\eta(rz)} f(z), \label{eq:eta_Hr2} \\
     E_{k}^{1_1, \left( \mfrac{\cdot}{r} \right)} \mathrel{\bigg|_k} H_r &= \mfrac{r^{k/2}}{\tau\left( \left( \mfrac{\cdot}{r} \right)\right)} \: E_{k}^{\left( \mfrac{\cdot}{r} \right), 1_1}, \label{eq:Ek_1_Hr} \\
    E_{k}^{\left( \mfrac{\cdot}{r} \right), 1_1} \mathrel{\bigg|_k} H_r &= \mfrac{\left(  \mfrac{-1}{r} \right) \: \tau\left( \left( \mfrac{\cdot}{r} \right)\right)}{r^{k/2}} \: E_{k}^{1_1, \left( \mfrac{\cdot}{r} \right)}, \label{eq:Ek_2_Hr}
\end{align}
where $\tau\left( \left( \mfrac{\cdot}{r} \right)\right)$ denotes the Gauss sum.
We recall that $M_{k}\left( r, \left( \mfrac{\cdot}{r} \right)  \right)$ has a basis of normalized Hecke eigenforms for all $T_p$ given by \\
$\left\{g_1 = E_{k}^{1_1, \left( \mfrac{\cdot}{r} \right)}, \: g_2 = E_{k}^{\left( \mfrac{\cdot}{r} \right), 1_1}, \: g_3, \: \ldots \:, \: g_d  \right\}$ where $g_{i} \in S_{k}^{\new}\left( r, \left( \mfrac{\cdot}{r} \right)  \right)$ for $3 \leq i \leq d$. 
Let $g_i(z) = \displaystyle{\sum_{n}} a_{i}(n) q^{n}$ for all $1 \leq i \leq d$. This implies that for all $3 \leq i \leq d$, we have
\begin{equation} \label{prod_coeff}
a_{i}(r) \: \overline{a_{i}(r)} = r^{k-1},  
\end{equation} 
and using \cite{weisinger} and \cite{winnie-li}, we deduce that 
\begin{equation} \label{gi_Hr}
g_{i} \mid_k H_r = \lambda_{i} \: \overline{g_i}, \:\: \text{where}
\end{equation}
\begin{enumerate}
    \item \begin{alignat}{2}
& \begin{aligned}
  &\overline{g_i} = 
&  \begin{cases}
   g_2, & i = 1, \\
   g_1, & i = 2, \\
   g_i \mid K = \overline{g_i(- \overline{z})}, & 3 \leq i \leq d. \\
  \end{cases} \\
  \end{aligned}
\quad \text{and}   \quad
\begin{aligned} \label{eval_eigen}
  &\lambda_i = 
 & \begin{cases}
  \mfrac{r^{k/2}}{\tau \left(\left( \mfrac{\cdot}{r}  \right)\right)}, & i = 1, \\ \\
  \mfrac{\left( \mfrac{-4}{r} \right) \: \tau \left(\left( \mfrac{\cdot}{r}  \right)\right)}{r^{k/2}}, & i = 2, \\ \\
   \mfrac{r^{1 - k/2} \:\: \overline{a_i(r)}}{\tau \left(\left( \mfrac{\cdot}{r}  \right)\right)} , & 3 \leq i \leq d. \\
  \end{cases} \\
  \end{aligned}
\end{alignat} 
\item    \begin{equation} \label{action_gi_Hr}
    \overline{g_i} \mid_k H_r = \left( \mfrac{-4}{r} \right) \lambda_i^{-1} g_i.
    \end{equation}
\end{enumerate}

We note that $\left\{ \overline{g_1} = E_{k}^{\left( \mfrac{\cdot}{r} \right), 1_1}, \: \overline{g_2} = E_{k}^{1_1, \left( \mfrac{\cdot}{r} \right)}, \: \overline{g_3}, \: \ldots \:, \: \overline{g_d}  \right\}$ is also a basis of normalized Hecke eigenforms of the space $M_{k}\left( r, \left( \mfrac{\cdot}{r} \right)  \right)$ for all $T_p$. We suppose that
\begin{align} 
    \mfrac{\eta^r(rz)}{\eta(z)} f(rz) = \displaystyle{\sum_{j = 1}^{d} \beta_j \: \overline{g_j}}. \label{eq:beta_j} 
\end{align}
We claim that for all $1 \leq i \leq d$, we have 
\begin{equation}
\alpha_i = r^{(k + 1)/2} \cdot i^{(r-1)/2} \: \beta_i \: \left( \mfrac{-4}{r} \right) \:\lambda_i^{-1} \: a_i(r). \label{rel_alpha_beta}
\end{equation}
On the one hand,
\begin{align}
\left( \mfrac{\eta^r(rz)}{\eta(z)} f(rz) \right) \mathrel{\bigg|} H_r \mathrel{\bigg|_k} U_r &= \mfrac{1}{r^{(k + 1)/2} \cdot i^{(r-1)/2}} \: \left(\mfrac{\eta^r(z)}{\eta(rz)} f(z) \right) \mathrel{\bigg|_k} U_r, \: \text{using} \: \eqref{eq:eta_Hr2}\\
&= \mfrac{1}{r^{(k + 1)/2} \cdot i^{(r-1)/2}} \: \left(\displaystyle{\sum_{i = 1}^{d} \alpha_i \: g_i} \right), \: \text{using} \: \eqref{eq:alpha_i} \label{alphai_g}.
\end{align}
On the other hand,
\begin{align}
\left( \mfrac{\eta^r(rz)}{\eta(z)} f(rz) \right) \mathrel{\bigg|} H_r \mathrel{\bigg|_k} U_r &= \left( \displaystyle{\sum_{j = 1}^{d} \beta_j \: \overline{g_j}} \right) \mathrel{\bigg|} H_r \mathrel{\bigg|_k} U_r, \: \text{using} \:\eqref{eq:beta_j} \\
&= \displaystyle{\sum_{j = 1}^{d}}  \beta_j \left( \left( \mfrac{-4}{r} \right) \lambda_j^{-1} g_j \right) \mathrel{\bigg|_k} U_r, \: \text{using} \: \eqref{action_gi_Hr}\\
&= \displaystyle{\sum_{j = 1}^{d}}  \beta_j \left( \left( \mfrac{-4}{r} \right) \lambda_j^{-1} \right) \: (a_j(r) \: g_j).  \label{betaj_g}
\end{align}
Comparing \eqref{alphai_g} and \eqref{betaj_g}, we obtain the desired result in \eqref{rel_alpha_beta} as claimed.
We define the following for all $1 \leq i \leq d$,
\begin{equation} \label{defn_Gi}
    G_{i}(z) = g_{i}(z) \: g_{i}(6z) - g_{i}(2z) \: g_{i}(3z) \quad , \quad \overline{G_{i}}(z) = \overline{g_{i}}(z) \: \overline{g_{i}}(6z) - \overline{g_{i}}(2z) \: \overline{g_{i}}(3z)
\end{equation}
Using \eqref{comm_V_W_6N}, \eqref{gi_Hr}, \eqref{action_gi_Hr}, and \eqref{defn_Gi},  it follows that 
\begin{equation} \label{G_Wr}
    G_{i} \mid_{2k} W_{r}^{6r} = \left( \mfrac{6}{r} \right) \: \lambda_{i}^{2} \: \overline{G_i} \quad \text{and} \quad \overline{G_{i}} \mid_{2k} W_{r}^{6r} = \left( \mfrac{6}{r} \right) \: \lambda_{i}^{-2} \: G_i.
\end{equation}
We require the following proposition.
\begin{proposition}  \label{comm_Sr_Wr}  Let $f(z) \in M_{k}(N, \chi)$.
 \begin{enumerate} [label=(\alph*)]
     \item $\Sh_r(\eta^r(z) f(z)) \mid_{2k} W_{r}^{6r} = \left( \mfrac{12}{r} \right) \: r^{k} \: \left(\displaystyle{\sum_{i = 1}^{d}} \beta_i \: \overline{G_{i}} \right)$. \label{eq:comm_Sr_Wr:a}
     \item $\left( \displaystyle{\sum_{i = 1}^{d}} \beta_i \overline{G_{i}} \right) \mathrel{\bigg|_k} U_{r} = \left( \mfrac{12}{r} \right) \: \Sh_{r}(\eta^{r}(z) f(z))$. \label{eq:comm_Sr_Wr:b}
 \end{enumerate}   
\end{proposition}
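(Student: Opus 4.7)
The plan is to prove part \ref{eq:comm_Sr_Wr:b} first, as it follows in essentially one line from the apparatus already assembled. Combining parts \ref{eq:Comm_S1:b} and \ref{eq:Comm_S1:d} of Lemma \ref{Comm_(r,6)=1} gives \eqref{relation_S1_Sr}; into its right-hand side one substitutes the expansion \eqref{eq:beta_j} of $\mfrac{\eta^r(rz)}{\eta(z)} f(rz)$ in the basis $\{\overline{g_j}\}$, uses linearity of $\mathcal{S}_1$, and applies Theorem \ref{first_image}(1) to each normalized Hecke eigenform $\overline{g_j} \in M_k\!\left(r, \left(\tfrac{\cdot}{r}\right)\right)$, whose first Shimura image is by definition $\overline{G_j}$. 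This produces
\[
\mathcal{S}_r\left(\eta^r(z) f(z)\right) = \left(\mfrac{12}{r}\right) \left( \sum_{j=1}^{d} \beta_j \, \overline{G_j} \right) \mathrel{\bigg|_{2k}} U_r,
\]
and multiplying both sides by $\left(\mfrac{12}{r}\right)$ while using $\left(\mfrac{12}{r}\right)^{2} = 1$ yields part \ref{eq:comm_Sr_Wr:b}.

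For part \ref{eq:comm_Sr_Wr:a}, the plan is to start from the decomposition $\mathcal{S}_r(\eta^r(z) f(z)) = \sum_i \alpha_i G_i$ supplied by Theorem \ref{image_(r,6) = 1}, apply $W_r^{6r}$ to both sides, and invoke the Atkin--Lehner action \eqref{G_Wr} to obtain
\[
\mathcal{S}_r\left(\eta^r(z) f(z)\right) \mathrel{\bigg|_{2k}} W_r^{6r} = \left(\mfrac{6}{r}\right) \sum_{i=1}^{d} \alpha_i \, \lambda_i^{2} \, \overline{G_i}.
\]
Comparing with the target expression in \ref{eq:comm_Sr_Wr:a}, the proof reduces to verifying the coefficientwise identity $\alpha_i \left(\mfrac{6}{r}\right) \lambda_i^{2} = \left(\mfrac{12}{r}\right) r^{k} \beta_i$ for each $i$, which after substituting the relation \eqref{rel_alpha_beta} between $\alpha_i$ and $\beta_i$ becomes the single claim
\[
\lambda_i \, a_i(r) = \left(\mfrac{-2}{r}\right) r^{(k-1)/2} \, i^{-(r-1)/2}, \qquad 1 \le i \le d.
\]

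The principal obstacle is establishing this identity uniformly in $i$, because \eqref{eval_eigen} provides three different formulas for $\lambda_i$ and the $r$-th Fourier coefficients $a_i(r)$ also differ in form across the Eisenstein and cuspidal blocks. Writing $\tau := \tau\!\left(\left(\tfrac{\cdot}{r}\right)\right)$ for the quadratic Gauss sum, for the cuspidal newforms ($3 \le i \le d$) the claim is immediate from \eqref{prod_coeff}: one has $\lambda_i a_i(r) = r^{1-k/2} \overline{a_i(r)} \, a_i(r)/\tau = r^{k/2}/\tau$. For the Eisenstein eigenforms $g_1$ and $g_2$, direct computation gives $a_1(r) = 1$ and $a_2(r) = r^{k-1}$, and combining with \eqref{eval_eigen} and the Gauss sum relation $\tau^{2} = \left(\mfrac{-1}{r}\right) r$ produces the same common value $r^{k/2}/\tau$. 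The residual step is the classical quadratic Gauss sum evaluation $\tau = r^{1/2} \left(\mfrac{-2}{r}\right) i^{(r-1)/2}$---equivalently, $\sqrt{r}$ if $r \equiv 1 \pmod{4}$ and $i\sqrt{r}$ if $r \equiv 3 \pmod{4}$---which, combined with the uniformity just established, closes the proof of \ref{eq:comm_Sr_Wr:a}.
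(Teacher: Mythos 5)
Your proposal is correct and follows essentially the same route as the paper: part (b) is the identical one-line argument from \eqref{relation_S1_Sr}, \eqref{eq:beta_j}, and Theorem \ref{first_image}(1), and part (a) applies \eqref{G_Wr} to $\sum_i \alpha_i G_i$ and then uses \eqref{rel_alpha_beta} together with a case analysis over the Eisenstein and cuspidal basis elements plus the quadratic Gauss sum evaluation, exactly as in the paper. The only (cosmetic) difference is that you package the coefficient check as the uniform identity $\lambda_i a_i(r) = r^{k/2}/\tau$, whereas the paper verifies $\left(\mfrac{-6}{r}\right) r^{(k+1)/2}\, i^{(r-1)/2}\, \lambda_i\, a_i(r) = \left(\mfrac{12}{r}\right) r^{k}$ case by case; the two are equivalent via \eqref{eval_gauss}.
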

\begin{proof}
To prove part \ref{eq:comm_Sr_Wr:a} of Proposition \ref{comm_Sr_Wr}, we compute
\begin{align}
 \Sh_r(\eta^r(z) f(z)) \mid_{2k} W_{r}^{6r} &= \Sh_1 \left( \eta(z) \left[\left( \mfrac{ \eta^r(z)}{\eta(rz)} f(z) \right) \mathrel{\bigg|_{k}} U_r  \right]  \right) \mathrel{\bigg|_{2k}} W_{r}^{6r},  \quad \text{using part} \:\: \ref{eq:Comm_S1:d} \:\:\text{of Lemma} \:\: \ref{Comm_(r,6)=1} \\
 &= \Sh_1 \left( \eta(z) \cdot \sum_{i = 1}^{d} \alpha_{i} g_{i}(z) \right) \mathrel{\bigg|_{2k}} W_{r}^{6r}, \:\: \text{using} \: \eqref{eq:alpha_i} \\
 &= \left( \sum_{i = 1}^{d} \alpha_i \: G_{i}(z) \right) \mathrel{\bigg|_{2k}} W_{r}^{6r}, \quad \text{using part} \: (1) \: \text{of Theorem}\:\: \ref{first_image}.\\
 &= \sum_{i = 1}^{d} \alpha_i \left( \mfrac{6}{r} \right) \: \lambda_{i}^{2} \: \overline{G_i}(z), \:\: \text{using} \: \eqref{G_Wr} \\
 &= \sum_{i = 1}^{d} \left(  r^{(k + 1)/2} \cdot i^{(r-1)/2} \: \left( \mfrac{-6}{r} \right) \:\lambda_i \: a_i(r) \right) \:\: \beta_i \: \overline{G_i}(z), \quad \text{using} \: \eqref{rel_alpha_beta}. \label{simplify_Sr_W} \\
\end{align}
We claim that for all $1 \leq i \leq d$, we have 
\begin{equation} \label{simplify_coeff}
\left( \mfrac{-6}{r} \right) \: r^{(k + 1)/2} \cdot i^{(r-1)/2} \:\lambda_i \: a_i(r) = \left( \mfrac{12}{r} \right) \: r^{k}.
\end{equation}
To prove the claim, we first observe that for $(r, 6) = 1$, 
\begin{alignat}{2}
& \begin{aligned}
  &\tau \left( \left( \mfrac{\cdot}{r} \right)  \right)  = 
&  \begin{cases}
   r^{1/2}, & r \equiv 1 \spmod{4}, \\
   i \: r^{1/2}, & r \equiv 3 \spmod{4}. \\
  \end{cases} \\
  \end{aligned}
\quad \implies \quad 
\begin{aligned} \label{eval_gauss}
 & \begin{cases} 
  \mfrac{\tau \left( \left( \mfrac{\cdot}{r} \right)  \right)}{i^{(r - 1)/2}} = \left( \mfrac{-8}{r} \right) \: r^{1/2}, \\ \\
  \tau \left( \left( \mfrac{\cdot}{r} \right)  \right) \: i^{(r - 1)/2} = \left( \mfrac{8}{r} \right) \: r^{1/2}.  \\
  \end{cases} \\
  \end{aligned}
\end{alignat} 
We now do a case-by-case analysis as follows:
\begin{enumerate}[label=(\alph*)]
    \item Let $i = 1$. We compute
    \begin{align}
 \left( \mfrac{-6}{r} \right) \: r^{(k + 1)/2} \cdot i^{(r-1)/2} \:\lambda_i \: a_i(r) &=         \left( \mfrac{-6}{r} \right) \: r^{(k + 1)/2} \cdot i^{(r-1)/2} \mfrac{r^{k/2}}{\tau \left(\left( \mfrac{\cdot}{r}  \right)\right)}, \:\: \text{using} \:\: \eqref{eval_eigen}\\
 &= \left( \mfrac{-6}{r} \right) \: r^{k + 1/2} \: \left( \mfrac{-8}{r} \right) \: r^{-1/2}, \:\: \text{using} \:\: \eqref{eval_gauss} \\
 &= \left(  \mfrac{12}{r} \right) \: r^{k}.
    \end{align}
    \item  Let $i = 2$. We compute
    \begin{align}
    \left( \mfrac{-6}{r} \right) \: r^{(k + 1)/2} \cdot i^{(r-1)/2} \:\lambda_i \: a_i(r) &=         \left( \mfrac{-6}{r} \right) \: r^{(k + 1)/2} \cdot i^{(r-1)/2} \: \mfrac{\left( \mfrac{-4}{r} \right) \: \tau \left(\left( \mfrac{\cdot}{r}  \right)\right)}{r^{k/2}} \: r^{k-1}, \:\: \text{using} \:\: \eqref{eval_eigen}\\
    &= \left( \mfrac{-6}{r} \right) \: r^{k - 1/2} \: \left( \mfrac{-4}{r} \right) \: \left( \mfrac{8}{r} \right) \: r^{1/2}, \:\: \text{using} \:\: \eqref{eval_gauss}\\
    &= \left(  \mfrac{12}{r} \right) \: r^{k}.
 \end{align}
 \item  Let $3 \leq i \leq d$. We compute
    \begin{align}
    \left( \mfrac{-6}{r} \right) \: r^{(k + 1)/2} \cdot i^{(r-1)/2} \:\lambda_i \: a_i(r) &=         \left( \mfrac{-6}{r} \right) \: r^{(k + 1)/2} \cdot i^{(r-1)/2} \:  \mfrac{r^{1 - k/2} \:\: \overline{a_i(r)}}{\tau \left(\left( \mfrac{\cdot}{r}  \right)\right)} \: a_{i}(r), \:\: \text{using} \:\: \eqref{eval_eigen}\\
    &= \left( \mfrac{-6}{r} \right) \:  r^{(k + 1)/2} \: \left( \mfrac{-8}{r} \right) \: r^{-1/2} \: r^{1 - k/2} \: r^{k-1},  \:\: \text{using} \:\: \eqref{eval_gauss} \:\: \text{and} \:\: \eqref{prod_coeff}\\
    &= \left(  \mfrac{12}{r} \right) \: r^{k}.
 \end{align} 
\end{enumerate}
Substituting \eqref{simplify_coeff} in \eqref{simplify_Sr_W}, we obtain the desired result.
We now proceed to the proof of \ref{eq:comm_Sr_Wr:b} of Proposition \ref{comm_Sr_Wr}.  We compute
\begin{align}
 \Sh_r(\eta^r(z) f(z)) &= \left( \mfrac{12}{r}  \right) \: \Sh_1 \left( \eta(z) \left( \mfrac{ \eta^r(rz)}{\eta(z)} f(rz) \right) \right) \mathrel{\bigg|_{2k}}  U_r,  \quad \text{using} \:\: \eqref{relation_S1_Sr}  \\
 &= \left( \mfrac{12}{r}  \right) \: \Sh_1 \left( \eta(z) \cdot \sum_{j = 1}^{d} \beta_{j} \overline{g_{j}} \right) \mathrel{\bigg|_{2k}} U_r, \quad \text{using} \:\: \eqref{eq:beta_j}\\
  &= \left( \mfrac{12}{r}  \right) \left( \sum_{j = 1}^{d} \beta_{j} \overline{G_j} \right)\mathrel{\bigg|_{2k}} U_r, \quad \text{using part} \: (1) \: \text{of Theorem}\:\: \ref{first_image}. \label{LHS_Sr}
 \end{align}
Multiplying both sides of \eqref{LHS_Sr} by $\left( \mfrac{12}{r}  \right)$ gives us the desired result. 
\end{proof}
\noindent
Lastly, we claim that $\Sh_r(\eta^r(z) f(z)) \in S_{2k}(6)$. It suffices to show that 
\begin{equation}
Tr_{6}^{6r}(\Sh_{r}(\eta^r(z) f(z))) = [ \Gamma_{0}(6): \Gamma_{0}(6r)] \: \Sh_r(\eta^r(z) f(z)) = (r + 1) \: \Sh_r(\eta^r(z) f(z)).
\end{equation}
We compute
\begin{align}
Tr_{6}^{6r}(\Sh_{r}(\eta^r(z) f(z))) &=  \Sh_{r}(\eta^r(z) f(z)) + r^{1 - k} \: \Sh_{r}(\eta^r(z) f(z)) \mid_{2k} W_{r}^{6r} \mid U_{r}, \:\: \text{using} \:\: \eqref{defn_trace1}\\
&= \Sh_{r}(\eta^r(z) f(z)) + r^{1 - k} \: \left( \left( \mfrac{12}{r} \right) \: r^{k} \: \displaystyle{\sum_{i = 1}^{d}} \beta_i \: \overline{G_{i}} \right) \mid U_{r}, \: \text{using part} \:\ref{eq:comm_Sr_Wr:a} \: \text{of Proposition} \: \ref{comm_Sr_Wr} \\
&= \Sh_{r}(\eta^r(z) f(z)) + r \: \left( \mfrac{12}{r} \right) \left( \left( \mfrac{12}{r} \right) \: \Sh_{r}(\eta^{r}(z) f(z))  \right), \: \text{using part} \:\ref{eq:comm_Sr_Wr:b} \: \text{of Proposition} \: \ref{comm_Sr_Wr}\\
&= (1 + r) \: \Sh_{r}(\eta^r(z) f(z)).
\end{align}
This completes the proof of Theorem \ref{image_(r,6) = 1}.

\subsection{Proof of Theorem \ref{image_(3|r)}}
Let $r$ be an odd integer with $0 < r < 8$. and let $s \geq 0$ be an even integer. Let $f(z) \in M_s(1)$ and let $k = \mfrac{3(r - 1)}{2} + s$. We recall that $M_{k}\left(r, \left( \frac{\cdot}{r} \right)\right)$ has a basis $\{g_{1}, \ldots, g_{d}\}$ of normalized Hecke eigenforms for all $T_{p}$, so there exists $\alpha_{i}$ for $1 \leq i \leq d$ such that
    \begin{equation} \label{eq:alpha_eta3}
    \left( \frac{\eta^{3r}(z)}{\eta^{3}(rz)} f(z) \right) \mathrel{\bigg|_{k}} U_{r} = \sum_{i = 1}^{d} \alpha_i g_{i}(z).
    \end{equation}
    For all $1 \leq i \leq d$, we define $G_i(z) = g_i(z) \: \Theta(g_i(2z)) - g_i(z) \: \Theta(g_i(3z)) \in S_{2k + 2}(2r)$. We note that part (2) of Theorem \ref{first_image} implies that $G_i(z)$ is new at prime $p =2$ with Atkin-Lehner eigenvalue $-\left( \mfrac{8}{r} \right)$.
\noindent
We observe that for $r = 3$,
\begin{align}
\left( \frac{\eta^{9}(z)}{\eta^{3}(3z)} f(z) \right) \mathrel{\bigg|} U_{3} = \frac{\left( \eta^{9}(z) f(z) \right) \mid U_{3}} {\eta^{3}(z)}  \in  
 M_{r + s}\left( 3, \left( \frac{\cdot}{3} \right) \right) \implies 
\left( \eta^{9}(z) f(z) \right) \mid U_{3} \in 
 S_{\frac{9}{2} + s}\left( 3, \left( \frac{\cdot}{3} \right) \nu_{\eta}^{3} \right).
\end{align}
and for $r \in \{1,5,7\}$, we have
\begin{equation}
\left( \frac{\eta^{3r}(z)}{\eta^{3}(rz)} f(z) \right) \mathrel{\bigg|} U_{r} = \frac{\left( \eta^{3r}(z) f(z) \right) \mid U_{r}} {\eta^{3}(z)}  \in  
 M_{k}\left( 3, \left( \frac{\cdot}{3} \right) \right) \implies 
\left( \eta^{3r}(z) f(z) \right) \mid U_{r} \in 
 S_{\frac{3r}{2} + s}\left( r, \left( \frac{\cdot}{r} \right) \nu_{\eta}^{3} \right).
\end{equation}

\noindent
We show that $\mathcal{S}_{r}(\eta^{3r}(z) f(z)) = \displaystyle{\sum_{i = 1}^{d}} \alpha_{i} G_{i}(z)$. 
\noindent
We compute
\begin{align}
\mathcal{S}_r(\eta^{3r}(z) f(z)) &= \mathcal{S}_{1}((\eta ^{3r}(z) f(z)) \mid_k U_r),  \quad \text{using part} \: (b) \:\:\text{of Lemma} \:\: \ref{Comm_r/3} \\
& = \mathcal{S}_{1}\left(\eta^3(z) \left( \left( \frac{\eta^{3r}(z)}{\eta^{3}(rz)} f(z) \right) \mathrel{\bigg|_{k}} U_{r} \right) \right), \\
& = \mathcal{S}_{1} \left( \eta^{3}(z) \left( \sum_{i = 1}^{d} \alpha_{i} g_{i}(z)    \right) \right) \text{using} \:\: \eqref{eq:alpha_eta3}\\
&= \sum_{i = 1}^{d} \alpha_i \: \mathcal{S}_{1} \left( \eta^{3}(z) g_{i}(z) \right), \\
 & = \sum_{i = 1}^{d} \: \alpha_{i} G_{i}(z), \text{using part} \: (2) \: \text{of Theorem}\:\: \ref{first_image}.
\end{align}

To prove that $\mathcal{S}_{r}(\eta^{3r} f) = \sum \alpha_{i} G_{i}$ has level 2, we use a similar approach as in the proof of part (1) of Theorem \ref{image_(r,6) = 1}.

\medskip

The proof of Theorem \ref{rankin_bracket} follows from an adaptation of Xui's work \cite{huixue} and making use of Theorem \ref{first_Shimura_image}.

\newpage

\end{document}